\documentclass{article}
\usepackage[T2A]{fontenc}
\usepackage[utf8]{inputenc}
\usepackage[english]{babel}
\usepackage[tbtags]{amsmath}
\usepackage{amsfonts,amssymb,mathrsfs,amscd}
\usepackage{graphicx}

\usepackage{float}

\numberwithin{equation}{section}
\newtheorem{theorem}{Theorem}
\newtheorem{lemma}{Lemma}[section]
\newtheorem{cor}{Corollary}
\newtheorem{exmpl}{Example}

\newtheorem{proof}{Proof}
\newtheorem{remark}{Remark}

\begin{document}

\begin{center}
{\bf On uniqueness sets and coefficients of multiple Walsh series converging over cubes}
\end{center}

\begin{center}
A.~D.~Kazakova
\end{center}

We study problems on uniqueness sets ($U$-sets) for multiple Walsh series converging over cubes and the properties of the coefficients of such series. New broad classes of $U$-sets are constructed. In particular, it is proved that hyperplanes parallel to the coordinate ones are $U$-sets. For the coefficients of multiple Walsh series converging over cubes, both the index sets on which they can be made arbitrarily large and the index sets on which these coefficients tend to zero are described.

Bibliography: 49 titles.

Keywords: Walsh system, uniqueness sets, multiple series, convergence over cubes, Cantor–Lebesgue theorem

\section{Introduction}\label{s1}
The theory of uniqueness sets is one of the classical branches of the theory of orthogonal series. Uniqueness sets (otherwise, $U$-sets) are exceptional sets outside which convergence to zero of series in an orthogonal system does not violate uniqueness. Sets that are not $U$-sets are called $M$-sets.

Research by many authors \cite{Zygmynd}, \cite{bari-1961}, in particular the Salem--Zygmund--Bari--Pyateckii-Shapiro theorem, has shown that the question of how to distinguish $U$- and $M$-sets is very subtle, related not only to the metric and topological, but also to the arithmetic structure of the sets. Thus, in \cite{Kechris-Louveau-1987} it is proved that there is no constructive criterion allowing one to recognize $U$-sets (even in the class of closed sets).

Some recent results concerning uniqueness problems, including those for other function systems, can be found in \cite{kholshchevnikova-2019, kozma-olevskii-2020, plotnikov-2020, skvortsov-2021, wronicz-2021, lukomskii-2021, plotnikov-2022, gevorkyan-2023, gevorkyan-2024, Gev25, keryan-khachatryan-2024, kazakova-plotnikov-2025-1, kazakova-plotnikov-2025-2, kazakova-plotnikov-2025-3}.

In the multidimensional case, the uniqueness theory of orthogonal series turns out to be much more complicated than in the one-dimensional case. There are not many results here, and many fundamental questions remain open. Thus, for multiple trigonometric series, it is still unknown whether even the empty set is a $U$-set for convergence over cubes. The hypothesis of J.M.~Ash \cite{Ash-2007} is that it is not. In other words, J.M.~Ash suggests that there could exist a multiple trigonometric series, not all of whose coefficients are zero, converging everywhere to zero over cubes. A significant breakthrough is the result of J. Bourgain \cite{Bourgain-1996}, who showed that the empty set is a $U$-set for convergence over spheres for multiple trigonometric series. In the case of convergence over rectangles, a breakthrough is the work of Sh.T.~Tetunashvili \cite{tetynashvili-1993}, in which a broad class of $U$-sets is constructed, including all countable sets. A weaker result, stating that the empty set is a $U$-set for double trigonometric series with convergence over rectangles, was proved earlier by J.M.~Ash and G. Welland \cite{Ash-Welland-1972}. Other broad classes of continuum $U$-sets are contained in the works of L.D.~Gogoladze \cite{Gogoladze-2008} and T.A.~Zherebyeva \cite{Zhe10} (see also the work \cite{ash-freiling-rinne-1993}).

In the present paper, we consider uniqueness questions for another classical system of functions --- the Walsh system.
For multiple Walsh series, the first results on uniqueness sets appeared in the works of H.O.~Movsisyan \cite{Movisyn} and V.A.~Skvortsov \cite{Scv-1975}. They independently proved that any at most countable set is a $U$-set for convergence over rectangles. Moreover, from the results of \cite{Scv-1975} it follows that any union of a countable number of hyperplanes is a $U$-set for convergence over rectangles. Significant progress in constructing new classes of continuum uniqueness sets for convergence over rectangles was made by S.F.~Lukomskii \cite{Lukomskii-1989} and later by L.D.~Gogoladze \cite{Gogoladze-2008} and T.A.~Zherebyeva \cite{Zherebeva-2009}.
See also the work of V.A.~Skvortsov and F.~Tulone \cite{skvortsov-tulone-2015}, where such classes are also essentially contained, and the work of N.N.~Kholshchevnikova \cite{Kho02}, which investigates questions about the union of uniqueness sets, which also allows obtaining new classes of $U$-sets. In \cite{Lukomskii-1989}, the following theorem was proved:

Theorem A (Theorem 3, \cite{Lukomskii-1989}) Let $\phi(t^2, \ldots, t^d)$ be an at most countably-valued function. Then the surface in $[0,1]^d$ given by the equation $t^1 = \phi(t^2, \ldots, t^d)$ is a uniqueness set for the $d$-dimensional Walsh system.

In the case of convergence over cubes, which is weaker than convergence over rectangles, the situation becomes much more complicated. However, unlike the trigonometric system, the uniqueness theory for the multiple Walsh system under convergence over cubes has seen some development. It is worth noting that the natural domain of definition for the Walsh system is the dyadic group $\mathbb{G}$, for which the Walsh system is a character system.

In \cite{luk-dis-1996}, S.F.~Lukomskii showed that the empty set is a uniqueness set for convergence over cubes when considering Walsh functions on the dyadic group $\mathbb{G}$. If one considers $d$-dimensional Walsh functions on the unit cube $[0,1]^d$, then, as in the case of $d$-multiple trigonometric series, it is still unknown whether the empty set is a $U$-set.

A significant breakthrough in the uniqueness questions for multiple Walsh series in the case of convergence over cubes and $\lambda$-convergence was made by M.G.~Plotnikov \cite{plotnikov-07-1, plotnikov-07-2, plotnikov-2008, plotnikov-2010, plotnikov-2017, MP-EMJ-2019}; in particular, it was proved that any countable set is a uniqueness set for convergence over cubes. As can be seen from these works, studying uniqueness questions for convergence over cubes requires a detailed analysis of the arithmetic structure of the set and the application of more subtle methods than for convergence over rectangles. In \cite{plotnikov-2017}, the following result was obtained:

Theorem B (Theorem 3, \cite{plotnikov-2017}) Any finite union of planes parallel to the coordinate ones is a uniqueness set for $2$-convergence for multiple Walsh series.

Later, S.F.~Lukomskii \cite{lukomskii-2021} considered a narrower class of sets but with weaker convergence than in Theorem B. The following theorem was proved:

Theorem C (Theorem 1, \cite{lukomskii-2021}) Segments of the form $\boldsymbol{\xi} \times \mathbb{G}$, where $\boldsymbol{\xi} \in \mathbb{G}^{d-1}$, are uniqueness sets for multiple Walsh series under convergence over cubes.

For the one-dimensional Walsh system $\{W_n(g)\}$, a set $E \subset \mathbb{G}$ is called a Dirichlet set (see \cite[Chapter 7]{Schipp-and-Co}) if $\liminf_{n \to \infty} \sup_{g \in E} |1-W_n(g)| = 0$. In uniqueness theory, Dirichlet sets were studied by K.~Yoneda~\cite{yoneda-1984, yoneda-1982}, who established in particular~\cite{yoneda-1982} that all one-dimensional Dirichlet sets are $U$-sets for the Walsh system. Following the definition of Dirichlet sets in the one-dimensional case, we call a set $E \subset \mathbb{G}^d$ a Dirichlet set for the $d$-dimensional Walsh system if there exists a $d$-dimensional sequence $\mathbf{N} = ( \mathbf{N}_i \in \mathbb{N}^d, \, i \in \mathbb{N} )$ such that
\begin{equation} 
\label{Eq:U-05} 
E := WD^d(\mathbf{N}) := \bigcap_{i\in \mathbb{N}} WD^d(\mathbf{N}_i); \quad WD^d(\mathbf{N}_i) = \bigcap_{i\in \mathbb{N}} \{ \mathbf{g} \in \mathbb{G}^d \colon W_{ \mathbf{N}_i} ( \mathbf{g} ) = 1 \}. 
\end{equation}
As in the one-dimensional case (see \cite{yoneda-1982}), Dirichlet sets $WD^d(\mathbf{N})$ are subgroups of the group $\mathbb{G}^d$.

In \cite{plotnikov-2010}, M.~G.~Plotnikov studied Dirichlet sets of the form \( WD^d(2^{K}\mathbf{1}) \) (see \S \ref{sp-il}, Fig.~\ref{fig:2}a). Here \( 2^{K}\mathbf{1} \) denotes the sequence \( (2^{k_i}\mathbf{1}, \, i \in \mathbb{N}) \) constructed from a given sequence \( K = (k_i \in \mathbb{N}_0,  \, i\in \mathbb{N}) \). The following result was proved.

Theorem D (Theorem 11, \cite{plotnikov-2010}) The set $WD^d(2^{K}\mathbf{1})$ is a $U$-set for the $d$-dimensional Walsh system under convergence over cubes.

As a corollary of Theorem D, Dirichlet sets of the form $WD^d(2^{K}\mathbf{1})$ are uniqueness sets for convergence over rectangles. On the other hand, Theorem A allows us to specify another class of Dirichlet sets that are $U$-sets for convergence over rectangles. Consider the two-dimensional case and let an increasing sequence $(N_i \in \mathbb{N}_0,  \, i\in \mathbb{N})$ be given. Set \begin{equation} \label{spec-N}
 \mathbf{N} = (\mathbf{N}_i \in \mathbb{N}^2, \, i \in \mathbb{N} ) , \qquad  \mathbf{N}_i=(2^{i-1}, N_i).
\end{equation} Then $WD^2 (\mathbf{N})$ is a $U$-set for convergence over rectangles, because in this case the mapping $\phi$ defined in Theorem A is given as $\phi(g^2) = \sum\limits_{i \in \mathbb{N}} \frac{1 - W_{N_i}(g^2)}{2^{i+1}}$ and is a single-valued function.

In the case where $N_i$ in formula \eqref{spec-N} is such that $2^{m_i-i} \le N_i < 2^{2m_i-i}$ and $\frac{m_{i+1}}{m_i} \ge 2$, a two-dimensional Lukomskii set can be constructed from Dirichlet sets:
\begin{equation} \label{luk}
    E^2 := \bigcap_{i\in \mathbb{N}} \bigcup_{N_i = 2^{m_i-i}}^{2^{2m_i-i}-1} WD^2( \mathbf{N}_i) \cap P( \mathbf{N}_i); 
\end{equation}
here $P( \mathbf{N}_i)$ are rectangles of the form $\left[\frac{j}{2^{i-1}}, \; \frac{j+1}{2^{i-1}}\right]\times\left[\frac{k}{2^{m_i-i+1}}, \; \frac{k+1}{2^{m_i-i+1}}\right]$, where each $N_i$ uniquely corresponds to a specially chosen pair of indices $(j, k)$.
In \cite{Lukomskii-1992}, S.F.~Lukomskii showed that there exists a set $E^d \subset\mathbb{G}^d$ which is a $U$-set for convergence over rectangles, but an $M$-set for convergence over cubes and $\lambda$-convergence. In the two-dimensional case, as we noted (see \cite{Kazakova-2026}), the Lukomskii set $E^2$ can be described by formula \eqref{luk} (see \S \ref{sp-il}, Fig.\ref{fig:1}a).

An $M$-set $F^d \subset \mathbb{G}^d$ (see \S \ref{sp-il}, Fig.\ref{fig:1}b) for repeated convergence, convergence over rectangles and, as a consequence, over cubes and $\lambda$-convergence, was constructed in \cite{kazakova-plotnikov-2025-3}. There, a visual description of the set $F^d$ is given, and a modification of its construction is proposed, leading to Dirichlet sets $WD^d (\mathbf{N})$, which under certain constraints on the sequence $\mathbf{N}$ are $U$-sets for $\lambda$-convergence and, consequently, over rectangles.

Note that the $M$-sets $E^d$ and $F^d$ have a rather nontrivial structure. A natural question arises: will sufficiently simple in a certain sense sets (of Dirichlet type, planes, etc.) be uniqueness sets for convergence over cubes? In this paper (Sec. \ref{p-2-3}), progress is made on this question. In Theorems \ref{th-3-1}, \ref{th-3-5}, \ref{th-3-2}, \ref{th-3-4}, \ref{th-3-3}, \ref{th-3-7}, new classes of $U$-sets for convergence over cubes are constructed. Theorems \ref{th-3-1} and \ref{th-3-5} (see also Corollaries \ref{important-WD} and \ref{no-important}) establish that certain sets, whose construction uses multidimensional Dirichlet sets, are $U$-sets for convergence over cubes. In Section \ref{planes}, dyadic planes are introduced and examples are given. These planes include, in particular, diagonal and inclined planes passing through dyadic rational points, as well as planes parallel to the coordinate ones. Theorems \ref{th-3-2}, \ref{th-3-4} and \ref{th-3-3} (see also Corollary \ref{important}) prove that dyadic planes are $U$-sets for convergence over cubes. This significantly expands the existing knowledge about uniqueness sets, since until now not a single example of a hyperplane (for $d>2$) that is a $U$-set for convergence over cubes was known. Recall that Theorem B considered stronger convergence, while Theorem C only considered segments.

Theorem \ref{th-3-3} (see also Corollary \ref{important}) generalizes both Theorems B and C at once and provides an answer to the question posed by S.F.~Lukomskii in \cite{lukomskii-2021}: let $d > 2$, $d = d_1 + d_2$, $\mathbf{x} \in \mathbb{G}^{d_2}$; for which $d_1$ and $d_2$ is the set $\mathbb{G}^{d_1} \times \{\mathbf{x}\}$ a uniqueness set for convergence over cubes? We obtain that for all admissible $d_1$ and $d_2$ ($d_1<d$), the set $\mathbb{G}^{d_1} \times \{\mathbf{x}\}$ is a $U$-set for convergence over cubes.

The content of this paper is as follows. \S~\ref{S:Prelim-1} contains the main definitions and auxiliary facts. Illustrations for the text are contained in \S~\ref{sp-il}. The main results are contained in \S~\ref{S:Main-results}.

In Section \ref{p-2-2}, Theorems 3 and 4 are proved, connecting the convergence of certain cubic sums of a series with properties that can be interpreted as a certain kind of continuity of the corresponding finitely additive set functions of dyadic type (so-called quasimeasures). More on quasimeasures in Sec. \ref{kvazi-mese}. These results are key for Section \ref{p-2-3}, the content of which was briefly outlined above. Note that the continuity of a quasimeasure in the classical sense, analogous to the one-dimensional case, is a tool for solving uniqueness problems for the case of convergence over rectangles \cite{skvortsov-tulone-2015} and does not work in the case of convergence over cubes.
As pointed out in \cite{MP-EMJ-2019}, this fundamental difference is largely due to the fact that the coefficients of convergent $d$-dimensional Walsh series behave significantly differently depending on whether we are dealing with convergence over all rectangles or over rectangles with a bounded aspect ratio. Namely, if a series converges over rectangles to a finite sum even on a special set of measure zero, then its coefficients tend to zero (see \cite{Scv-1975}). On the other hand, there exists a multiple Walsh series that converges over cubes even everywhere to a finite sum, but its coefficients from some massive set grow faster than any predetermined sequence (see \cite{plotnikov-2012}). In \cite{plotnikov-2012}, both index sets whose coefficients can be made arbitrarily rapidly growing, and index sets on which coefficients tend to zero, are identified.

In Section \ref{p-2-1}, new index sets of both types described above are constructed. Namely, a Cantor–Lebesgue type theorem for the diagonal coefficients of a series under certain conditions on their indices is proved (Theorem \ref{theorem-1-1}), and at the same time, Theorem \ref{theorem-1-2} contains an example of a series converging everywhere over cubes to a finite sum. Some of its diagonal coefficients with indices that do not satisfy the conditions of Theorem \ref{theorem-1-1} grow faster than any predetermined sequence. Note that, by virtue of P. Cohen's theorem \cite{Cohen-1958} and the theorem of J.M.~Ash and G.~Wang \cite{ash-Wang-1997}, such behavior is impossible for multiple trigonometric series converging over cubes on a set of full measure, whose coefficients grow slower than any exponential.

The work uses ideas from the works of M.G.~Plotnikov \cite{plotnikov-07-1}, \cite{plotnikov-2010} and \cite{plotnikov-2012}.

\section{Notation, main definitions, and auxiliary facts}
\label{S:Prelim-1}

\subsection{Notation}

We write $:=$ for equality by definition.

$\mathbb{C}$ is the set of complex numbers,
$\mathbb{N}$ is the set of natural numbers;
$\mathbb{N}_0 := \mathbb{N} \cup \{ 0 \}$.

\[
\mathrm{I} (A)
:=
\begin{cases}
1,  &\text{if the statement $A$ is true},
\\
0,  &\text{if the statement $A$ is false},
\end{cases}
\]
We write $n_k$ for the \emph{dyadic coefficients} of a number $n \in \mathbb{N}_0$,
which are taken from the \emph{dyadic expansion}
$n = \sum\nolimits_{k=0}^\infty n_k 2^k$, $n_k = 0 \vee 1$,
of the number $n$.

We use the notation $a : b$ for the set $\{ a, a+1, \ldots, b-1, b \}$.

Multiplication of a vector by a scalar is understood in the usual sense.

Throughout the paper, we fix a natural number $d \ge 2$.

We write $\mathbf{0}$ for the $d$-dimensional vector $(0, \ldots, 0)$,
$\mathbf{1}$ for the $d$-dimensional vector $(1, \ldots, 1)$,
$\mathbf{g} = ( g^1, \ldots, g^d )$, $\mathbf{h} = ( h^1, \ldots, h^d )$, ...
The notation $\mathbf{g} < \mathbf{h}$ ($\mathbf{g} \le \mathbf{h}$) means
that $g^j < h^j$ ($g^j \le h^j$) for all $j \in 1:d$.

By $\Sigma^{d}$ we denote the set of vectors $\boldsymbol{\sigma} = (\sigma_1, \dots, \sigma_d)$ such that $\sigma_q \in \{0, 1\}$ for all $q = 1:d$. The expression $|\boldsymbol{\sigma}|$ denotes the sum $\sigma_1 + \cdots + \sigma_d$.

Let $\Sigma^{d}_2:=\Sigma^d/\mathbb{Z}_2$ be the set of vectors $\boldsymbol{\sigma} \in \Sigma^d$ such that $|\boldsymbol{\sigma}| \equiv 0 \pmod{2}$.

By $\# N$ we denote the number of nonzero dyadic coefficients of the number $N \in \mathbb{N}_0$.

By $|J|$ we denote the number of elements in the set $J$.

\subsection{Main definitions}
\subsubsection{}
The \emph{dyadic group} $\mathbb{G} = \mathbb{G}_2$
is defined as the direct product of a countable set
of cyclic groups $\mathbb{Z}_2$ with the discrete topology,
endowed with the Tikhonov topology.
The zero of the group $\mathbb{G}$ and the inverse elements are defined
in the obvious way.
It is convenient to represent elements of $\mathbb{G}$ as sums
of formal series converging in the topology of $\mathbb{G}$:
\begin{equation}
\label{Eq:PG-El}
\bigoplus\limits_{ k = 0 }^\infty
g_k e_k,
\quad
g_k \in \{ 0, 1 \},
\end{equation}
or as the series themselves.
Here $e_k$ are the generating elements of the group $\mathbb{G}$, $2 e_k = 0$,
and the group operation $\oplus$ is applied to the elements \eqref{Eq:PG-El} componentwise.

For each $d \in \mathbb{N}$, the set $\mathbb{G}^d = ( \mathbb{G}_2 )^d$
is a topological abelian group
with addition $\mathbf{g} \oplus \mathbf{h}
:= ( g^1 \oplus h^1, \ldots, g^d \oplus h^d )$,
and the zero of the group and inverse elements
are defined naturally.
We denote by $\oplus$
the group operation both on the group $\mathbb{G}$
and on $\mathbb{G}^d$, and this will not lead to confusion.

A basis for the topology of the group $\mathbb{G}$ is formed by cosets
of the subgroups $\mathbb{G}_k := \left\{ \bigoplus\nolimits_{ t = k+1 }^\infty
g_t e_t \right\}$, which are called {\it dyadic intervals} of rank $k$ and
are often enumerated as follows:
\[
\Delta^{(k)}_m
:=
\left\{
    \bigoplus\nolimits_{ t = 0 }^\infty
    g_t e_t
    \colon
    g_t = m_{k - 1 - t}, \;\; t \in [ 0, k )
\right\},
\quad
k \in \mathbb{N}_0,
\quad
m \in 0 : 2^k - 1,
\]
where $m_t$ are the dyadic coefficients of the number $m$.
A basis for the topology of $\mathbb{G}^d$ is formed by $d$-dimensional
{\it dyadic cubes} (of rank $k$):
\begin{equation}
\label{Eq:P-adic-Cube}
\Delta^{(k)}_{\mathbf{m}}
:=
\Delta^{(k)}_{m^1}
\times
\ldots
\times
\Delta^{(k)}_{m^d},
\qquad
\mathbf{m} \in ( 0 : 2^k - 1 )^d,
\end{equation}
each of which is simultaneously open and closed in the topology of $\mathbb{G}^d$.
For some dyadic cube of rank $k$, we write $\Delta^{(k)}$. By $\Delta^{(k)}(\mathbf{g})$ we denote the smallest cube of rank $k$ containing the point $\mathbf{g}$.

The mapping $F$ that assigns to
the element \eqref{Eq:PG-El} the sum of the series
$\sum\limits_{k=0}^\infty \frac{g_k}{2^{k+1}}$
maps $\mathbb{G}$ bijectively (up to a countable set)
onto $[0,1]$, and $\Delta^{(k)}_m$ onto the sets
$\left[ m p^{-k}, \, ( m + 1 ) p^{-k} \right] \subset [0,1]$. Clearly, the preimages of each dyadic irrational point, zero, and one consist of a single element, while the preimages of each dyadic rational point $t \in [0,1]$, except zero and one, consist of two elements, which we denote by $t_- \in \mathbb{G}$ and $t_+ \in \mathbb{G}$. Here $t_-$ denotes the element of the group $\mathbb{G}$ with infinitely many $1$'s and $t_+$ denotes the element of $\mathbb{G}$ with infinitely many $0$'s.

The mapping \begin{equation} \label{otobr-F}
    ( g^1 , \ldots, g^d ) \stackrel{F}{\mapsto} ( F ( g^1 ), \ldots, F ( g^d ) )
\end{equation}
maps $\mathbb{G}^d$ bijectively (up to a set of measure zero) onto $[0,1]^d$,
and the dyadic cubes \eqref{Eq:P-adic-Cube}
onto the cubes $\mathop{\times}\limits_{ l = 1 }^d
\left[ m^l p^{-k}, \, ( m^l + 1 ) p^{-k} \right]$.

By the \emph{measure} $\mu_d$ on the group $\mathbb{G}^d$ we mean the normalized ($\mu_d ( \mathbb{G}^d ) = 1$)
Haar measure, defined on all Borel subsets of the group $\mathbb{G}^d$
and invariant under shifts and transformations mapping $H$ to $H^{-1}$.
In this case, $\mu_d ( \Delta^{(k)}_{\mathbf{m}} ) = 2^{-kd}$.
For the one-dimensional case, see \cite{Schipp-and-Co}. Sometimes we will omit the index of the measure indicating the dimension.

\subsubsection{}

On the group $\mathbb{G}$, the {\it Walsh functions} in Paley enumeration
are defined as
$W_n (g) = \prod_{k=0}^{\infty}
( - 1 )^{g_k n_k}$, where $n \in \mathbb{N}_0$,
and $g$ is an element of $\mathbb{G}$ of the form \eqref{Eq:PG-El}.

The \emph{$d$-dimensional Walsh functions}
$W_{\mathbf{n}}$,
\begin{equation}
\label{Eq:VC-F}
W_{\mathbf{n}} ( \mathbf{g} )
:=
\prod\limits_{l=1}^d
W_{n^l} (g^l),
\quad
\mathbf{n} \in ( \mathbb{N}_0 )^d,
\quad
\mathbf{g} \in \mathbb{G}^d,
\end{equation}
form an orthonormal system in $L^2 ( \mathbb{G}^d, \mu_d )$, and
\[
W_{ \mathbf{n} } ( \mathbf{g} )
W_{ \mathbf{n} } ( \mathbf{h} )
=
W_{\mathbf{n}} ( \mathbf{g} \oplus \mathbf{h} )
\quad
\text{for all $\mathbf{n}$, $\mathbf{g}$, $\mathbf{h}$}.
\]


A {\it $d$-dimensional Walsh series} on $\mathbb{G}^d$ is defined by the formula
\begin{equation}
 \label{series}
\sum_{ \mathbf{n} \in \mathbb{N}_0^d }
c_{ \mathbf{n} } W_{ \mathbf{n} } ( \mathbf{g} ),
\quad
c_{ \mathbf{n} } \in \mathbb{C},
\end{equation}
and its $\mathbf{N}$-th \emph{rectangular partial sums} at a point $\mathbf{g}$ are
\begin{equation}
\label{Eq:PartSums}
S_{\mathbf{N}} ( \mathbf{g} )
:=
\sum_{ \mathbf{n} < \mathbf{N} }
c_{ \mathbf{n} }
W_{\mathbf{n}} (\mathbf{g}),
\quad
\mathbf{N} \in \mathbb{N}^d.
\end{equation}
Partial sums $S_{\mathbf{N}}$ with indices $\mathbf{N} = N \mathbf{1}$
are called \emph{cubic} and are denoted simply by $S_N$.

For $\mathbf{n}$, $\mathbf{N} - \mathbf{1} < 2^k \mathbf{1}$,
each function $W_{\mathbf{n}}$ and the partial sum
$S_{\mathbf{N}}$ take constant values on $\Delta^{(k)}$
$=: W_{\mathbf{n}} ( \Delta^{(k)} )$ and
$=: S_{\mathbf{N}} ( \Delta^{(k)} )$, respectively.

The series \eqref{series} \emph{converges over rectangles} at a point $\mathbf{g}$
to a sum $S \in \mathbb{C}$ if
\[
\lim S_{\mathbf{N}} ( \mathbf{g} ) = S
\;\; \text{as $\min \{ N^1, \ldots, N^d \} \to \infty$},
\]
and \emph{over cubes} if $\lim\limits_{N \to \infty} S_N ( \mathbf{g} ) = S$.


\subsubsection{}

The function
\[
D_N (g)
:=
\sum_{ n < N }
W_n (g)
\]
is called the $N$-th {\it Dirichlet kernel} for the (one-dimensional) Walsh system.
The following formulas are known (see \cite{golubov-efimov-skvortsov}, formulas (1.4.9), (1.4.11), (1.4.13) and \cite{plotnikov-2012}, formula (3.9), Lemma 5):
\begin{equation}
\label{Eq:WFS}
D_n(x) = n,
\quad 1 \le n \le 2^{k+1}, \quad
x \in \Delta^{(k+1)}_0,
\end{equation}

\begin{equation}
\label{Eq:WFS-2}
D_n = D_{2^k} + R_k D_m,
\quad
n = 2^k + m,
\quad
m \in 1 : 2^k,
\end{equation}
where $R_k \equiv W_{2^k}$ are the {\it Rademacher functions};

\begin{equation}
\label{Eq:WFS-3}
D_{2^k} (g)
=
\begin{cases}
2^k,      & \text{$g \in \Delta_0^{(k)}$},
\\
0       & \text{otherwise}.
\end{cases}
\end{equation}
\begin{equation}
\label{Eq:WFS-5}
D_N
=
\sum\limits_{j=1}^s
R_{k_1} \ldots R_{k_{j-1}}
D_{2^{k_j}}, \quad N = 2^{k_1} + \ldots + 2^{k_s}, \
k_1 > \ldots > k_s,
\end{equation}
\begin{equation}
\label{Eq:Pl12-lemm5}
D_N(g) = 0, \quad N = 2^{k_1}+\ldots+2^{k_s}, \quad k_1 > \ldots > k_s, \quad g \in \mathbb{G}\setminus \mathbb{G}_{k_s}.
\end{equation}

The $\mathbf{N}$-th \emph{Dirichlet kernel} for the $d$-dimensional Walsh system
is
\begin{equation}
\label{Eq:WFS-11}
D_{\mathbf{N}} (\mathbf{g})
:=
\sum_{ \mathbf{n} < \mathbf{N} }
W_{\mathbf{n}} (\mathbf{g})
=
\prod\limits_{l=1}^d
D_{N^l} (g^l),
\quad
\mathbf{N} \in \mathbb{N}^d.
\end{equation}

\subsubsection{}
\label{kvazi-mese}
{\it Quasimeasures} on the group $\mathbb{G}^d$ are
finitely additive set functions
$\tau \colon \mathcal{B} \to \mathbb{C}$,
where $\mathcal{B}$ is a semiring
consisting of $\emptyset$ and all dyadic cubes.
Any quasimeasure $\tau$ can be extended
to the ring generated by $\mathcal{B}$;
if $\tau$ is nonnegative,
it can be extended to a $\sigma$-additive measure
on the $\sigma$-algebra of Borel subsets of $\mathbb{G}^d$.

It is easy to verify that a set function
$\tau \colon \mathcal{B} \to \mathbb{C}$
is a quasimeasure if and only if
\begin{equation}
\label{Eq:QM-Char-Prop}
\tau
\big( \Delta^{(k)}_{\mathbf{m}} \big)
=
\sum\limits_{ \boldsymbol{\sigma} \in \{ 0, 1 \}^d }
\tau
\big( \Delta^{(k+1)}_{ 2 \mathbf{m} + \boldsymbol{\sigma} } \big)
\;\;
\text{for all admissible $k$ and $\mathbf{m}$}.
\end{equation}

The set of all quasimeasures is isomorphic as a linear space
to the set of all series \eqref{series};
the canonical isomorphism
is established by the mapping that sends the series \eqref{series}
to the quasimeasure $\tau$,
\begin{equation}
\label{Eq:Canon-Iso}
\begin{split}
\tau \big( \Delta^{(k)} \big)
:=
&
\sum\limits_{\mathbf{n} \le 2^k \mathbf{1}}
\int\limits_{ \Delta^{(k)} }
c_{\mathbf{n}} W_{\mathbf{n}} ( \mathbf{g} ) \, d \mu ( \mathbf{g} )
\\
=
&
2^{-kd}  S_{2^k} ( \Delta^{(k)} ),
\end{split}
\end{equation}
which we say is generated by the given series.

The {\it support} of a quasimeasure $\tau$
is the (closed) set $F = \mathbb{G}^d \setminus G$,
where $G$ is the union of all possible dyadic cubes
$\Delta_0$ such that $\tau (\Delta)=0$
for all dyadic cubes $\Delta \subset \Delta_0$.
Notation: $\mathrm{supp} \, \tau$.

For more details, see \cite[2.3]{MP-EMJ-2019}; \cite{Schipp-and-Co};
\cite[Ch.~4]{VA-Hung-2004}.


\subsubsection{}
\label{Subsub:F-Tau-05}

If a set $F \subset \mathbb{G}^d$ is closed
and the series \eqref{series} converges over cubes to zero on
$\mathbb{G}^d \setminus F$, then $\mathrm{supp} \, \tau \subset F$
for the quasimeasure $\tau$ generated by this series.
For a proof, see, e.g., in~\cite[Lemma~1]{plotnikov-07-2}.

\subsubsection{}
We define a multi-valued ``contraction'' operator $\mathrm{C}_q$ on
the dyadic group $\mathbb{G}$, namely
$$ g = \bigoplus_{k=0}^{\infty} g_k e_k \mapsto \mathrm{C}_q(g) := \left\{ \widetilde{g}\in \mathbb{G}\colon \widetilde{g} = \bigoplus_{k=0}^{q-1}\widetilde{g}_k e_{k}
+ \bigoplus_{k=0}^{\infty} g_k e_{k+q} \right\}. $$
The multi-valued nature of the operator is explained by the non-injectivity of the dilation operator \cite{Luk09} on compact zero-dimensional groups, which, in turn, is due to their specific ``periodic'' structure. Note that for locally compact zero-dimensional groups, the dilation operator is injective, and therefore the ``contraction'' is uniquely defined.

Let $q\le p$, then the equality $\mathrm{C}_q(g) = \mathrm{C}_p(x)$ means that
\begin{equation} \label{r_2}
g \in  \mathrm{C}_{p-q}(x).
\end{equation}

\subsection{Auxiliary statements}
We denote $\mathbf{e}_k^{\boldsymbol{\sigma}} = (\sigma_1e_k, \ldots, \sigma_de_k)$, where $e_k$ are the generators of the group $\mathbb{G}$, and $\boldsymbol{\sigma} \in \Sigma^d$. Lemma \ref{lem-1} generalizes Lemma 3 from \cite{plotnikov-2010}, where it was proved for the case $M_1 = 2^k$.
\begin{lemma}
\label{lem-1}
Let $M_1, M_2 \in \mathbb{N}$, such that
\[
M_1 = 2^{k_1}+\ldots+2^{k_l}, \quad M_2 = M_1 + r, \quad k_1>k_2 \ldots > k_l, \quad   r < 2^{k_l }.
\]
Then for the cubic partial sums $S_N$ of series (1.1) and for all $\mathbf{g} \in \mathbb{G}^d$, the following equality holds
\[
\sum_{\boldsymbol{\sigma}^1, \ldots,  \boldsymbol{\sigma}^l \in \Sigma^d_2}  \left[ S_{M_2+1}\left( \mathbf{g} \oplus \left( 
\bigoplus_{j=1}^{l}
\mathbf{e}^{\boldsymbol{\sigma}^j}_{k_j+1} \right) \right) - S_{M_1}\left( \mathbf{g} \oplus \left( 
\bigoplus_{j=1}^{l}
\mathbf{e}^{\boldsymbol{\sigma}^j}_{k_j+1} \right) \right)  \right]
\]
\begin{equation} \label{eq-0-0}
    = 2^{l(d-1)} \sum_{\mathbf{n}=M_1 \cdot \mathbf{1}}^{M_2 \cdot \mathbf{1}} c_{\mathbf{n}} W_{\mathbf{n}}(\mathbf{g}).
\end{equation}
\end{lemma}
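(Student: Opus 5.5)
The plan is to expand the left-hand side of \eqref{eq-0-0} coefficientwise, exchange the order of summation, and evaluate a character sum over the shifts. First, $S_{M_2+1}-S_{M_1}=\sum_{\mathbf{n}\in A} c_{\mathbf{n}}W_{\mathbf{n}}$, where $A$ is the set of $\mathbf{n}\in\mathbb{N}_0^d$ with $\max_q n^q\in M_1:M_2$. Write $\mathbf{h}(\boldsymbol{\sigma}^1,\dots,\boldsymbol{\sigma}^l):=\bigoplus_{j=1}^{l}\mathbf{e}^{\boldsymbol{\sigma}^j}_{k_j+1}$. Then the multiplicativity $W_{\mathbf{n}}(\mathbf{g}\oplus\mathbf{h})=W_{\mathbf{n}}(\mathbf{g})W_{\mathbf{n}}(\mathbf{h})$ gives
\[
\text{LHS}=\sum_{\mathbf{n}\in A}c_{\mathbf{n}}W_{\mathbf{n}}(\mathbf{g})\prod_{j=1}^{l}\Bigl(\sum_{\boldsymbol{\sigma}\in\Sigma^d_2}W_{\mathbf{n}}\bigl(\mathbf{e}^{\boldsymbol{\sigma}}_{k_j+1}\bigr)\Bigr).
\]
The product structure comes from the fact that the $l$ generators sit in distinct positions, so the $l$ sums over $\boldsymbol{\sigma}^j$ factor.

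Next, compute the inner sum. The generator $\mathbf{e}_{k_j+1}$ pairs, under the paper's indexing convention, with the dyadic digit of $n^q$ at the position carried by $2^{k_j}$ in $M_1$. I expect reconciling the index shift ``$k_j+1$'' with the Paley digit $n_{k_j}$ to be the only delicate bookkeeping point; I would check it directly against the definition of $W_n$ and of $\Delta^{(k)}$. Let $b_q\in\{0,1\}$ denote that digit of $n^q$, so that $W_{\mathbf{n}}(\mathbf{e}^{\boldsymbol{\sigma}}_{k_j+1})=(-1)^{\sum_q\sigma_q b_q}$. Since $\Sigma^d_2$ consists of the even-weight vectors, $\mathrm{I}(|\boldsymbol{\sigma}|\equiv 0)=\tfrac12\bigl(1+(-1)^{|\boldsymbol{\sigma}|}\bigr)$, and therefore
\[
\sum_{\boldsymbol{\sigma}\in\Sigma^d_2}(-1)^{\boldsymbol{\sigma}\cdot\mathbf{b}}=\tfrac12\Bigl(\prod_q(1+(-1)^{b_q})+\prod_q(1-(-1)^{b_q})\Bigr)=2^{d-1}\,\mathrm{I}(b_1=\dots=b_d).
\]
Hence the left-hand side equals $2^{l(d-1)}\sum_{\mathbf{n}\in A'}c_{\mathbf{n}}W_{\mathbf{n}}(\mathbf{g})$. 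Here $A'\subset A$ is the set of those $\mathbf{n}$ for which, at each of the $l$ digit positions $k_1,\dots,k_l$, all coordinates $n^q$ have the same digit.

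It remains to show that $A'=(M_1:M_2)^d$, which is a combinatorial identification. Take $\mathbf{n}\in A'$ and let $m=\max_q n^q$. Then $m=M_1+s$ with $0\le s\le r<2^{k_l}$, so $m$ has digits $1$ at $k_1,\dots,k_l$. By the definition of $A'$, every coordinate $n^q$ also has digits $1$ there. This gives $n^q\ge 2^{k_1}+\dots+2^{k_l}=M_1$, while $n^q\le m\le M_2$. Conversely, every $n^q\in M_1:M_2$ is of the form $M_1+s$ with $s<2^{k_l}$, so its digits at $k_1,\dots,k_l$ all equal $1$. Thus the agreement condition holds, and $\max_q n^q\in M_1:M_2$. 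Substituting $A'=(M_1:M_2)^d$ yields exactly the right-hand side of \eqref{eq-0-0}.
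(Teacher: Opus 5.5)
Your proof is correct, and it takes a genuinely different route from the paper's. The paper proves the pointwise identity \eqref{eq-0-2} for a single Walsh function by induction on the number of shifts. At each step it writes $\mathbf n=(2^{k_1}+\dots+2^{k_{l-1}})\mathbf 1+\mathbf s$, peels off the Rademacher factors, and invokes Lemma~3 of \cite{plotnikov-2010} for both the base and the last shift, while tracking the auxiliary index sets $V_i$ and $W_i$. You instead compute the whole shift-average in one stroke. Multiplicativity and distributivity factor it into $l$ independent sums over $\Sigma^d_2$. The parity identity $\mathrm{I}(|\boldsymbol\sigma|\text{ even})=\frac12(1+(-1)^{|\boldsymbol\sigma|})$ then shows that each factor is $2^{d-1}$ times the indicator that all coordinates of $\mathbf n$ share the same digit at that position. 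The hypothesis $r<2^{k_l}$ enters only in the no-carry argument that identifies the surviving index set with $(M_1:M_2)^d$. Your version is self-contained: its case $l=1$ reproves the cited Lemma~3. It also shows where $\Sigma^d_2$, the factor $2^{l(d-1)}$ and the condition $r<2^{k_l}$ come from, and it avoids the intermediate bookkeeping. The paper's version is shorter only because it outsources the character computation. A small correction: the factorization uses only multiplicativity and distributivity, not the distinctness of the positions.

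The one place you should be explicit, rather than deferring to a later check, is the index convention. With the definitions as displayed in the paper ($g=\bigoplus_{k\ge 0}g_ke_k$, $W_n(g)=\prod_k(-1)^{g_kn_k}$), one gets $W_{\mathbf n}(\mathbf e^{\boldsymbol\sigma}_{k_j+1})=(-1)^{\sum_q\sigma_q n^q_{k_j+1}}$. This is the digit at $2^{k_j+1}$, not at $2^{k_j}$. Under that literal reading the lemma already fails for $l=1$, $r=0$. Every $\mathbf n$ with $\max_q n^q=2^{k}$ has digit $0$ at position $k+1$ in all coordinates. Hence the left side equals $2^{d-1}\sum_{\max_q n^q=2^{k}}c_{\mathbf n}W_{\mathbf n}(\mathbf g)$ rather than $2^{d-1}c_{2^k\mathbf 1}W_{2^k\mathbf 1}(\mathbf g)$.

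The identity holds in the convention where $e_{k+1}$ detects the $2^k$ digit, i.e.\ generators indexed from $1$. This is what the paper's use of Lemma~3 of \cite{plotnikov-2010} in the proof of Theorem~\ref{th-2-1} and its definition of $\mathbb G_k$ presuppose. Equivalently, in the zero-based indexing one replaces $e_{k_j+1}$ by $e_{k_j}$. The paper's proof carries exactly the same dependence, so this is a defect of the statement's notation, not of your argument; state the convention at the outset.
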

\begin{proof}
Let us denote, \[W_i= \{{\bf n} \colon \ 2^{k_1}+\ldots+2^{k_i} \le n_q \le M_2 \quad \text{at least for one $q$}\}\] and \[V_i=\{{\bf n} \colon \ (2^{k_1}+\ldots+2^{k_i}) \cdot \mathbf{1} \le {\bf n} \le M_2 \cdot \mathbf{1}\}.\]
    Let us prove by induction on $i$ the following equality:
   \begin{equation} \label{eq-0-2}
      \sum_{\boldsymbol{\sigma}^1, \ldots,  \boldsymbol{\sigma}^i \in \Sigma^d_2}  W_{{\bf n}}\left( \mathbf{g} \oplus
\bigoplus_{j=1}^{i}
\mathbf{e}^{\boldsymbol{\sigma}^j}_{k_j+1} \right) = \begin{cases}
2^{(d-1)i} W_{{\bf n}}\left( \mathbf{g}  \right), & \text{if} \  {\bf n} \in V_i, \\
0, & \text{if}  \  {\bf n} \in W_i \setminus V_i.
\end{cases}
   \end{equation}
From equality \eqref{eq-0-2} follows \eqref{eq-0-0}, since $V_l = \{\mathbf{n}\colon M_1\cdot \mathbf{1} \le \mathbf{n} \le M_2\mathbf{1}\}$, and
\[
\left[ S_{M_2+1}\left( \mathbf{g} \oplus \left( 
\bigoplus_{j=1}^{l}
\mathbf{e}^{\boldsymbol{\sigma}^j}_{k_j+1} \right) \right) - S_{M_1}\left( \mathbf{g} \oplus \left( 
\bigoplus_{j=1}^{l}
\mathbf{e}^{\boldsymbol{\sigma}^j}_{k_j+1} \right) \right)  \right] = \sum_{\mathbf{n} \in W_l} W_{\mathbf{n}}({\mathbf{g}}).
\]

The base of induction (the case $l = 1$ in \eqref{eq-0-2}) is contained in Lemma 3 from \cite{plotnikov-2010}.
By the induction hypothesis, we have
   \[ \sum_{\boldsymbol{\sigma}^1 \in \Sigma^d_2} \ldots \sum_{\boldsymbol{\sigma}^{l-1} \in \Sigma^d_2} W_{{\bf n}}\left( (\mathbf{g} \oplus \mathbf{e}^{\boldsymbol{\sigma}^l}_{k_l+1} ) \oplus  \bigoplus_{j=1}^{l-1}
\mathbf{e}^{\boldsymbol{\sigma}^j}_{k_j+1}  \right) = \]
\begin{equation} \label{eq-0-01}
   = \begin{cases}
2^{(d-1)(l-1)} W_{{\bf n}}\left( \mathbf{g} \oplus \mathbf{e}^{\boldsymbol{\sigma}^l}_{{k_l+1}} \right), & \text{if} \  {\bf n} \in V_{l-1}, \\
0, & \text{if}  \  {\bf n} \in W_{l-1} \setminus V_{l-1}.
\end{cases}
\end{equation}
For  ${\bf n} \in V_{l-1}$:
\begin{equation} \label{eq-0-12}
    \mathbf{n} = 2^{k_1}\mathbf{1} + \ldots 2^{k_{l-1}}\mathbf{1} + \mathbf{s}, \qquad  \mathbf{s} \le 2^{k_{l}}\mathbf{1} + r\mathbf{1}.
\end{equation}
Then
\begin{equation} \label{eq-0-1}
    \sum_{\boldsymbol{\sigma}^l \in \Sigma^d_2} W_{{\bf n}}\left( \mathbf{g} \oplus \mathbf{e}^{\boldsymbol{\sigma}^l}_{k_l+1}   \right) =  R_{k_1 {\bf 1}}\ldots R_{k_{l-1} {\bf 1}}(\mathbf{g}) \sum_{\boldsymbol{\sigma}^l \in \Sigma_2^d} W_{\bf s} \left( \mathbf{g} \oplus \mathbf{e}^{\boldsymbol{\sigma}^l}_{k_l+1}   \right).   
\end{equation}
By Lemma 3 from \cite{plotnikov-2010}, expression \eqref{eq-0-1} equals
\begin{equation} \label{eq-0-111}
    \begin{cases}
2^{d-1} W_{\bf n}(\mathbf{g}), & \text{if} \ 2^{k_l} \mathbf{1}  \le \mathbf{s} \le 2^{k_l}  \mathbf{1} +r  \mathbf{1}  \\
0, & \text{otherwise}.
\end{cases}
\end{equation}
Substituting $\mathbf{s}$ from the first equality \eqref{eq-0-111} into \eqref{eq-0-12}, we have $\mathbf{n} \in V_l$ for such $\mathbf{s}$. Then for $\mathbf{n} \in V_l$
\[
\sum_{\boldsymbol{\sigma}^1 \in \Sigma^d_2} \ldots \sum_{\boldsymbol{\sigma}^l \in \Sigma^d_2} W_{{\bf n}}\left( \mathbf{g} \oplus
\bigoplus_{j=1}^{l}
\mathbf{e}^{\boldsymbol{\sigma}^j}_{k_j+1} \right)=\] 
\[= 2^{(d-1)(l-1)}\sum_{\boldsymbol{\sigma}^l \in \Sigma^d_2}  W_{{\bf n}}\left( \mathbf{g} \oplus \mathbf{e}^{\boldsymbol{\sigma}^l}_{{k_l+1}} \right) = 2^{(d-1)l} W_{{\bf n}}\left( \mathbf{g} \right).
\]
Thus, for $\mathbf{n} \in V_l$ equality \eqref{eq-0-2} is proved. In the case $\mathbf{n} \in W_l\setminus V_l$ we obtain either $\mathbf{n} \in W_{l-1}\setminus V_{l-1}$, and then \eqref{eq-0-2} follows from \eqref{eq-0-01}, or $\mathbf{n} \in V_{l-1} \setminus V_l$, and then \eqref{eq-0-2} follows from \eqref{eq-0-111}.

Equality \eqref{eq-0-2} is proved, and with it the lemma.
\end{proof}

Lemma \ref{lem-2} generalizes Lemma 4 from \cite{plotnikov-2010}. The proofs of these lemmas are similar.
\begin{lemma}
\label{lem-2}

Let $\Delta$ be a dyadic cube of rank $s$, $E \subseteq \Delta$ be a Borel set, with $\mu(E) > 0$. Let some $l\in \mathbb{N}$ be fixed. Then there exists a natural number $k_0 \geq s$ such that for all $k_1 > \ldots>k_l \geq k_0$ there exists a point $\mathbf{g}_{k_1, \ldots, k_l} \in \Delta$ for which the following condition holds

\begin{equation} \label{eq-lem-2}
    \mathbf{g}_{k_1, \ldots, k_l} \oplus \left(\bigoplus_{i=1}^{l} \mathbf{e}^{\boldsymbol{\sigma}^i}_{k_i} \right) \in E  \quad \text{for all } \boldsymbol{\sigma}^i \in \Sigma^d, \text{where } i \in \{1, \ldots, l\}.
\end{equation}
\end{lemma}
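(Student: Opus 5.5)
The plan is a density-point and measure-counting argument; it uses no Walsh-series machinery, only the fact that translations of $\mathbb{G}^d$ preserve $\mu$ and map suitable dyadic cubes onto themselves.

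\emph{Step 1 (a cube where $E$ is very dense).} Set $\varepsilon := 2^{-ld}$. I would show that there is a dyadic cube $\Delta' \subset \Delta$ of some rank $k_0 \ge s$ with
\[
\mu(E\cap\Delta') > (1-\varepsilon)\,\mu(\Delta').
\]
This follows from the Lebesgue differentiation theorem for the dyadic filtration on $\mathbb{G}^d$. For each $k$, the cubes of rank $k$ form a finite partition of $\mathbb{G}^d$. By the martingale convergence theorem, the conditional expectations of $\mathrm{I}(\,\cdot\in E)$ on these partitions converge $\mu$-a.e. to the indicator of $E$, so $\mu(E\cap\Delta^{(k)}(\mathbf{g}))/\mu(\Delta^{(k)}(\mathbf{g})) \to 1$ for a.e. $\mathbf{g}\in E$. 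Since $\mu(E)>0$ and $E\subseteq\Delta$, we can pick such a $\mathbf{g}\in E$ and $k\ge s$ large enough. Then $\Delta' := \Delta^{(k)}(\mathbf{g}) \subset \Delta$ works, with $k_0 := k$. Alternatively, one can avoid martingales by approximating $E$ in measure by a finite union of dyadic cubes.

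\emph{Step 2 (the shifts preserve $\Delta'$).} Fix $k_1>\ldots>k_l\ge k_0$. Consider the finite family of elements $\mathbf{h}=\bigoplus_{i=1}^l \mathbf{e}^{\boldsymbol{\sigma}^i}_{k_i}$ with $\boldsymbol{\sigma}^i\in\Sigma^d$; there are at most $2^{ld}$ of them. A cube of rank $k_0$ is defined by fixing the coordinates $g^j_t$ with $t<k_0$ in each component. Each such $\mathbf{h}$ changes only coordinates of index $k_i\ge k_0$, so the map $\mathbf{g}\mapsto\mathbf{g}\oplus\mathbf{h}$ is a bijection of $\Delta'$ onto itself. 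It is also measure preserving, by shift invariance of the Haar measure.

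\emph{Step 3 (counting).} For each such $\mathbf{h}$, let $B_{\mathbf{h}} := \{\mathbf{g}\in\Delta' : \mathbf{g}\oplus\mathbf{h}\notin E\}$. Then $B_{\mathbf{h}} = (\Delta'\setminus E)\oplus\mathbf{h}$, so
\[
\mu(B_{\mathbf{h}}) = \mu(\Delta'\setminus E) < \varepsilon\,\mu(\Delta').
\]
Taking the union over the at most $2^{ld}$ shifts gives $\mu\bigl(\bigcup_{\mathbf{h}} B_{\mathbf{h}}\bigr) < 2^{ld}\varepsilon\,\mu(\Delta') = \mu(\Delta')$. Hence some $\mathbf{g}_{k_1,\ldots,k_l}\in\Delta'\setminus\bigcup_{\mathbf{h}} B_{\mathbf{h}} \subset \Delta$ exists, and this point satisfies \eqref{eq-lem-2}.

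The key feature is that $k_0$ depends only on $E$, $\Delta$ and $l$, through $\varepsilon=2^{-ld}$, and not on the particular $k_1,\ldots,k_l$. This uniformity holds because the bound on the number of shifts is independent of the $k_i$. The only step needing care is Step 1, the density point for dyadic cubes on $\mathbb{G}^d$. It is standard, and I would cite it, or derive it from the regularity of Haar measure, as in Lemma 4 of \cite{plotnikov-2010}.
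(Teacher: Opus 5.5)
Your proposal is correct and follows essentially the same argument as the paper. Both find a dyadic cube of rank $k_0$ on which $E$ has relative density exceeding $1-2^{-ld}$, use that the cube and $\mu$ are invariant under the $2^{ld}$ shifts by $\bigoplus_i \mathbf{e}^{\boldsymbol{\sigma}^i}_{k_i}$ with $k_i\ge k_0$, and finish with a union bound. The paper intersects the translates $(\Delta_0\cap E)\oplus\mathbf{h}$, while you remove the translates of $\Delta'\setminus E$, which is the same count; your explicit check that $\Delta'\subset\Delta$ and $k_0\ge s$ covers a point the paper leaves implicit.
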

\begin{proof}
Since $\mu(E)>0$, the set $E$ has density points. Therefore, there exists a cube $\Delta_0$ of rank $k_0$ such that
\[
\frac{\mu(\Delta_0 \cap E)}{\mu(\Delta_0)} > 1- \frac{1}{2^{ld}}.
\]
Fix arbitrary $k_1 > \ldots > k_l \ge k_0$. For all combinations $\boldsymbol{\sigma}^i \in \Sigma^d$, where $i \in \{1, \ldots, l\}$, consider the sets (there are $2^{ld}$ of them in total): \[
E_0^{\boldsymbol{\sigma}^1, \ldots, \boldsymbol{\sigma}^l} := (\Delta_0 \cap E) \oplus \left(\bigoplus_{i=1}^{l} \mathbf{e}^{\boldsymbol{\sigma}^i}_{k_i} \right).
\]  
Then $E_0^{\boldsymbol{\sigma}^1, \ldots, \boldsymbol{\sigma}^l} \subset \Delta_0$ for all $\boldsymbol{\sigma}^i \in \Sigma^d$. Let us denote
\[
L := \bigcap_{\boldsymbol{\sigma}^1 \in \Sigma^d}\ldots\bigcap_{\boldsymbol{\sigma}^l \in \Sigma^d} E_0^{\boldsymbol{\sigma}^1, \ldots, \boldsymbol{\sigma}^l}.
\]
Using the invariance of the Haar measure under translation, we obtain
\[
\begin{split}
&\mu(L) = \mu(\Delta_0) - \mu(\Delta_0 \setminus L) \ge \mu(\Delta_0) - \sum_{\boldsymbol{\sigma}^1, \ldots, \boldsymbol{\sigma}^l \in \Sigma^d} \mu(\Delta_0 \setminus E_0^{\boldsymbol{\sigma}^1, \ldots, \boldsymbol{\sigma}^l}) > \\
& > \mu(\Delta_0) - \sum_{\boldsymbol{\sigma}^1, \ldots, \boldsymbol{\sigma}^l \in \Sigma^d} \frac{1}{2^{dl}}\mu(\Delta_0) = 0.
\end{split}
\]
Therefore, $\mu(L) > 0$ and $L \neq \emptyset$. Choose an arbitrary point $\mathbf{g}_{k_1, \ldots, k_l} \in L$. Then $\mathbf{g}_{k_1, \ldots, k_l} \in E_0^{\boldsymbol{\sigma}^1, \ldots, \boldsymbol{\sigma}^l}$ for all possible combinations $\boldsymbol{\sigma}^i \in \Sigma^d$. Consequently, taking into account that the operation $\oplus$ on the dyadic group $\mathbb{G}$ coincides with $\ominus$, the point $\mathbf{g}_{k_1, \ldots, k_l}$ satisfies \eqref{eq-lem-2}. The lemma is proved.
\end{proof} 

\begin{lemma}
\label{lem-4}
Let $P = \Delta^{(k_1)} \times \ldots \times \Delta^{(k_d)} \subset \mathbb{G}^d$ be some dyadic parallelepiped, and $E = \mathrm{supp} \, \tau$ be the support of some quasimeasure $\tau$. Then if
\begin{equation} \label{eq-lem-4}
    W_{\bf N} ({\bf g}) = 1 \qquad \text{for ${\bf g} \in E \cap P$},
\end{equation}
then \[
\int\limits_{P} W_{\bf N} ({\bf g}) d\tau = \tau(P).
\]
\end{lemma}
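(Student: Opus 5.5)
The plan is to compute the integral directly from the definition of integrating a Walsh polynomial against a quasimeasure. Since $W_{\mathbf N}$ depends only on finitely many coordinates, choose $k \ge \max\{k_1,\dots,k_d\}$ with $2^k\mathbf 1 > \mathbf N$. Then $W_{\mathbf N}$ is constant on every dyadic cube of rank $k$, and the parallelepiped $P$ is a finite disjoint union of rank-$k$ dyadic cubes $\Delta^{(k)}_{\mathbf m}\subset P$. We use the natural definition, consistent with finite additivity \eqref{Eq:QM-Char-Prop} and independent of the admissible $k$ chosen:
\[
\int_P W_{\mathbf N}\,d\tau=\sum_{\Delta^{(k)}_{\mathbf m}\subset P} W_{\mathbf N}\big(\Delta^{(k)}_{\mathbf m}\big)\,\tau\big(\Delta^{(k)}_{\mathbf m}\big).
\]
Also, by finite additivity, $\tau(P)=\sum_{\Delta^{(k)}_{\mathbf m}\subset P}\tau(\Delta^{(k)}_{\mathbf m})$. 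So it suffices to show, cube by cube, that
\[
W_{\mathbf N}(\Delta^{(k)}_{\mathbf m})\,\tau(\Delta^{(k)}_{\mathbf m})=\tau(\Delta^{(k)}_{\mathbf m}).
\]

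Split the rank-$k$ cubes in $P$ into two classes.

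\emph{Cubes meeting $E$.} If $\Delta^{(k)}_{\mathbf m}\cap E\neq\emptyset$, pick $\mathbf g$ in the intersection. Then $\mathbf g\in E\cap P$, so by \eqref{eq-lem-4} $W_{\mathbf N}(\mathbf g)=1$. Since $W_{\mathbf N}$ is constant on the cube, $W_{\mathbf N}(\Delta^{(k)}_{\mathbf m})=1$ and the identity holds.

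\emph{Cubes disjoint from $E$.} If $\Delta^{(k)}_{\mathbf m}\cap E=\emptyset$, we claim $\tau(\Delta^{(k)}_{\mathbf m})=0$, so the identity holds trivially. By definition of the support, every point of $\Delta^{(k)}_{\mathbf m}$ lies in some dyadic cube $\Delta_0$ such that $\tau$ vanishes on all dyadic subcubes of $\Delta_0$. The cube $\Delta^{(k)}_{\mathbf m}$ is compact, since it is closed in the compact group $\mathbb G^d$, so finitely many such $\Delta_0$ cover it. Any two dyadic cubes are either nested or disjoint. Hence $\Delta^{(k)}_{\mathbf m}$ is either contained in one of the $\Delta_0$ (giving $\tau=0$ immediately), or it is a finite disjoint union of those $\Delta_0$ contained in it. In the second case, refine all of them to a common rank $k'$: $\tau$ vanishes on each rank-$k'$ subcube, and additivity gives $\tau(\Delta^{(k)}_{\mathbf m})=0$.

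Summing the cube-by-cube identity over the rank-$k$ cubes in $P$ gives $\int_P W_{\mathbf N}\,d\tau=\tau(P)$.

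The only delicate step is the second class: the vanishing of $\tau$ on cubes disjoint from the support. It needs the compactness argument together with the nested-or-disjoint structure of dyadic cubes, so that finite additivity over the semiring $\mathcal B$ suffices and no $\sigma$-additivity is required. The rest is bookkeeping with the constancy of $W_{\mathbf N}$ on rank-$k$ cubes.
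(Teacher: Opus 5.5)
Your proposal is correct and follows the paper's argument: decompose $P$ into dyadic cubes fine enough that $W_{\mathbf N}$ is constant on each, drop the cubes missing $\mathrm{supp}\,\tau$ because $\tau$ vanishes on them, and use $W_{\mathbf N}=1$ on the cubes that remain. Your choice of $k\ge\max_i k_i$, which guarantees that $P$ really is a union of rank-$k$ cubes, and your compactness argument for $\tau=0$ on cubes disjoint from the support simply spell out two points the paper's proof leaves implicit.
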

\begin{proof}
Let ${\bf N} = (N^1,\ldots, N^d)$ и $k = \max\limits _{i \in 1 : d} \lfloor \log_2 N^i\rfloor$.
    \[ 
\begin{split} 
& \int\limits_{P} 
W_{ \mathbf{N} }({\bf g}) d \tau 
= 
\sum_{ \Delta^{(k+1)} \subset P } 
\int\limits_{ \Delta^{(k+1)} } 
W_{ \mathbf{N} }({\bf g}) d \tau 
= 
\sum_{ \Delta^{(k+1)} \subset P  } 
W_{\mathbf{N}} (\Delta^{(k+1)}) \tau ( \Delta^{(k+1)} )
\\ 
& 
\stackrel{\star}{=}
\sum_{ \Delta^{(k+1)} \subset P \; \wedge \; \Delta^{(k+1)} \cap E \neq \emptyset } 
W_{\mathbf{N}} (\Delta^{(k+1)}) \tau ( \Delta^{(k+1)} )
\\ 
& 
 \stackrel{\eqref{eq-lem-4}}{=} \sum_{ \Delta^{(k+1)} \subset P \; \wedge \; \Delta^{(k+1)} \cap E \neq \emptyset } 
 \tau ( \Delta^{(k+1)} ) 
= 
 \tau (P \cap E) 
\stackrel{\star}{=}
 \tau(P), 
\end{split}
\]
where in the transitions marked by $\star$, we use that $E$ is the support of the quasimeasure $\tau$.
The lemma is proved.
\end{proof}

Fix an arbitrary partition $\{1, \ldots d\} = \{i_1, \ldots, i_m\} \sqcup \{j_1, \ldots, j_{d-m}\}$, where $m \le d-1$. Then throughout the paper, each element ${\bf g} = (g^1, \ldots, g^d)$ of the group $\mathbb{G}^d$ will be written as ${\bf g} = ({\bf g}^{*}, {\bf g}_{*})$, where
\begin{equation} \label{g*}
  {\bf g}^{*} = (g^{j_1}, \ldots, g^{j_{d-m}}), \quad {\bf g}_{*} = (g^{i_1}, \ldots, g^{i_m}).
\end{equation}
Similarly, for dyadic cubes and indices
\begin{equation} \label{Delta*}
\Delta = \Delta^{*}\times\Delta_{*}, \qquad \mathbf{n} = (\mathbf{n}^{*}, \mathbf{n}_{*}),
\end{equation}
where $*$ above corresponds to the coordinates $\{j_1, \ldots, j_{d-m}\}$, and $*$ below corresponds to $\{i_1, \ldots, i_m\}$.
\begin{lemma}
\label{lem-5} Let $\tau(\Delta) = C$, then there exists a point $\boldsymbol{\xi} \in \mathbb{G}^{m}$ such that
$$ \tau(\Delta^* \times \Delta^{(k)}(\boldsymbol{\xi})) > \frac{1}{2^{m(k-k_0)}} C.$$
\end{lemma}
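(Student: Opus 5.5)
The plan is to split $\Delta$ by finite additivity along the $*$-lower coordinates only, keep $\Delta^*$ fixed, and take the piece of largest mass. I read the statement in the sense in which it is used later: $\tau$ is real on the cubes considered (otherwise one works with $\mathrm{Re}\,\tau$), $C>0$, $k\ge r$, and $\Delta=\Delta^*\times\Delta_*$ is a dyadic cube of rank $r\ge k_0$. The strict inequality can only be proved when $r>k_0$, as explained in the last paragraph.

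\textbf{Step 1 (decomposition).} Fix $k\ge r$. Iterate \eqref{Eq:QM-Char-Prop} $k-r$ times to pass from $\Delta$ to the sub-boxes $\Delta^{*,(k)}\times\Delta_*^{(k)}$. Then sum back over the $\Delta^{*,(k)}\subset\Delta^*$; this is legitimate because $\tau$ extends to a finitely additive function on the ring generated by $\mathcal{B}$. The result is
\[
C=\tau(\Delta)=\sum_{\Delta_*^{(k)}\subset\Delta_*}\tau\big(\Delta^*\times\Delta_*^{(k)}\big).
\]
The sum has exactly $2^{m(k-r)}$ terms, since $\Delta_*$ is an $m$-dimensional dyadic cube of rank $r$.

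\textbf{Step 2 (pigeonhole).} The terms are real and their sum is $C$, so their maximum is at least the average. Hence some sub-cube $\Delta_*^{(k)}$ satisfies $\tau(\Delta^*\times\Delta_*^{(k)})\ge 2^{-m(k-r)}C$. Take $\boldsymbol{\xi}$ to be any point of this sub-cube. Then $\Delta^{(k)}(\boldsymbol{\xi})$ is exactly that sub-cube, because dyadic cubes of a fixed rank are disjoint.

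\textbf{Step 3 (strictness).} If $r>k_0$ and $C>0$, then $2^{-m(k-r)}C=2^{m(r-k_0)}\cdot 2^{-m(k-k_0)}C>2^{-m(k-k_0)}C$, and the strict inequality follows.

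\textbf{Main obstacle: the boundary case $r=k_0$.} Here Step 2 gives only $\ge$, and I do not expect a proof of $>$. When every piece has mass exactly $2^{-m(k-k_0)}C$, the claimed strict inequality fails. Normalized Haar measure on a cube of rank $k_0$ is a concrete example. In that case I would not claim strictness. I would record the non-strict bound instead, and check that wherever the lemma is applied either the rank of $\Delta$ exceeds $k_0$ or the non-strict bound is enough. If the later argument genuinely needs $>$ at $r=k_0$, the lemma as worded is false, and the paper would have to add a hypothesis such as $r>k_0$.
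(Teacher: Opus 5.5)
Your Step~2 chooses $\boldsymbol{\xi}$ only after $k$ has been fixed. You take the heaviest of all $2^{m(k-r)}$ boxes $\Delta^*\times\Delta_*^{(k)}$ and put $\boldsymbol{\xi}$ in it. That proves ``for every $k$ there is a $\boldsymbol{\xi}_k$''. The lemma must deliver the stronger ``there is one $\boldsymbol{\xi}$ that works for every $k$''. In Theorems~\ref{th-3-1}, \ref{th-3-4} and~\ref{th-3-3}, the point $\boldsymbol{\xi}$ is passed to Theorem~\ref{th-2-1}. That theorem gives $2^{mk}\int_{\Delta^*\times\Delta^{(k)}(\boldsymbol{\xi})}R_{k\mathbf{1}}(\mathbf{g}^*)\,d\tau\to 0$ as $k\to\infty$ for a \emph{fixed} $\boldsymbol{\xi}$. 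The contradiction therefore needs the lower bound along that same $\boldsymbol{\xi}$ for all large $k$.

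Maximizers at different levels need not be nested. For example, take $m=1$. Let the two halves of $\Delta_*$ carry $0.6C$ and $0.4C$. Let the quarters of the first half carry $0.3C$ each, and those of the second half carry $0.4C$ and $0$. Then the heaviest half and the heaviest quarter are disjoint, so no single point lies in both boxes your argument selects.

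The missing idea is to run the pigeonhole step inside the previously chosen box, as the paper does. Suppose $\tau(\Delta^*\times\widetilde\Delta_*^{(j)})\ge 2^{-m(j-r)}C$. Split only $\widetilde\Delta_*^{(j)}$ into its $2^m$ children in the lower coordinates and keep the heaviest; it carries at least $2^{-m(j+1-r)}C$. The resulting boxes are nested, and their intersection is $\Delta^*\times\{\boldsymbol{\xi}\}$ for a single $\boldsymbol{\xi}\in\mathbb{G}^m$. Moreover $\Delta^{(k)}(\boldsymbol{\xi})=\widetilde\Delta_*^{(k)}$, so the bound holds for all $k\ge r$ at once.

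Your strictness objection is correct, and it exposes a flaw in the paper's own proof. That proof claims $\tau>C/2^m$ at each halving, although pigeonhole gives only $\ge$; at $k=k_0$, every $\boldsymbol{\xi}\in\Delta_*$ gives equality. The non-strict bound suffices wherever the lemma is used, because the contradiction only needs $2^{mk}\,|\tau(\Delta^*\times\Delta^{(k)}(\boldsymbol{\xi}))|\ge 2^{mk_0}C>0$. Working with $\mathrm{Re}\,\tau$ to make the inequalities meaningful for a complex quasimeasure is also legitimate.
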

\begin{proof}
    One can construct a nested sequence of parallelepipeds that contracts
to a $(d-m)$-dimensional cube. Partition $\Delta$ into $2^m$ parallelepipeds
$$ \Delta = \bigsqcup_{\Delta_*^{(k_0+1)} \subset \Delta_*} \Delta^* \times \Delta_*^{(k_0+1)}. $$
Since $\tau(\Delta)=C$, there exists a cube $\widetilde{\Delta}_{*}^{(k_0+1)}$ such that
$$ \tau(\Delta_* \times \widetilde{\Delta}_{*}^{(k_0+1)}) > \frac{C}{2^{m}}. $$
Repeating this argument multiple times, we obtain:
$$ \dots \subset \Delta^* \times \widetilde{\Delta}_{*}^{(k_0+k)} \subset \dots \subset \Delta^* \times \widetilde{\Delta}_{*}^{(k_0+1)} \subset \Delta, \quad \text{где $\tau(\Delta^* \times \widetilde{\Delta}_{*}^{(k_0+k)}) > \frac{1}{2^{mk}} C$}.$$
Then
$$ \Delta^* \times \boldsymbol{\xi} := \bigcap_{k=1}^{\infty} \Delta^* \times \widetilde{\Delta}_{*}^{(k_0+k)}. $$ 
The lemma is proved.
\end{proof}

\section{Main results} 
\label{S:Main-results}
\subsection{Dyadic planes on the group $\mathbb{G}^d$} \label{planes}
We distinguish classes of subgroups in the group $\mathbb{G}^{d}$.

Fix $d-m$ natural numbers $(q_j \in \mathbb{N})_{j=1}^{d-m} =:\mathbf{q}$.
Then
\begin{equation} \label{R_m}
    Q_{m, \mathbf{q}} = \left\{\mathbf{g} \in \mathbb{G}^d: \mathrm{C}_{q_1}(g^{j_1}) = \mathrm{C}_{q_2}(g^{j_2}) = \dots = \mathrm{C}_{q_{d-m}}(g^{j_{d-m}}), \mathbf{g}_* \in \mathbb{G}^m\right\}, \; m\le d-2
\end{equation}
is a certain subgroup isomorphic to $\mathbb{G}^{m+1}$. In the case where all shifts are trivial, $Q_{m, \mathbf{q}}$ is a "diagonal" subgroup
\begin{equation} \label{D_m}
    D_m = \{\mathbf{g} \in \mathbb{G}^d \colon g^{j_1}= \ldots=  g^{j_{d-m}},  \ {\bf g}_{*} \in \mathbb{G}^{m}\}, \quad m\le d-2.
\end{equation}
And we define the subgroup \(P_m\), which is the natural embedding of the group \(\mathbb{G}^m\) into the group \(\mathbb{G}^d\):
\begin{equation} \label{P_m}
P_m = \{{\bf g}\in \mathbb{G}^d \colon{\bf g}^{*}= \mathbf{0}, \ {\bf g}_{*} \in \mathbb{G}^{m}\}, \quad m\le d-1.
\end{equation}
Fix some element $\mathbf{x} \in \mathbb{G}^{d}$ and consider the cosets of the subgroups \eqref{R_m}, \eqref{D_m}, \eqref{P_m}:
\begin{equation} \label{all}
D_m^{\mathbf{x}} := D_m \oplus  \mathbf{x}, \quad Q_{m, \mathbf{q}}^{\mathbf{x}} := Q_{m, \mathbf{q}} \oplus  \mathbf{x}, \quad P_m^{\mathbf{x}} := P_m \oplus  \mathbf{x}.
\end{equation}

The following remarks provide a geometric interpretation of the sets $D^{\mathbf{x}}_m$, $Q^{\mathbf{x}}_{m, \mathbf{q}}$, and $P^{\mathbf{x}}_m$.
\begin{remark} Let $\mathbf{x}^*$ be defined according to \eqref{g*}.

If $\mathbf{x}^{*}=\textbf{0}$, then $D_m$ is an $(m+1)$-dimensional diagonal plane (see Fig. \ref{fig:3}a).

If $\mathbf{x}^{*} = \textbf{1}$, then $D_m^{\mathbf{x}}$ is an "anti-diagonal" plane (see Fig. \ref{fig:3}b).

If $\mathbf{x}^{*}$ is the "right" preimage of $\mathbf{t}_+$ under the mapping \eqref{otobr-F} for some dyadic rational point $\mathbf{t}$, then $D_m^{\mathbf{x}}$ is a set of planes parallel to $D_m$ and shifted from the diagonal plane by the element $\mathbf{x}$ (see Fig. \ref{fig:3}c).

If $\mathbf{x}^{*}$ is the "left" preimage of $\mathbf{t}_-$, then $D_m^{\mathbf{x}}$ is a set of planes parallel to the anti-diagonal plane.

In the case of a dyadic irrational point $\mathbf{x}^{*}$, the geometric interpretation of the set $D_m^{\mathbf{x}}$ becomes more complicated (see Fig. \ref{fig:3}d).
\end{remark}

\begin{remark} Let $\mathbf{x}^*$ be defined according to \eqref{g*}.

If $\mathbf{x}^* = \mathbf{0}$, then the set $Q_{m, \mathbf{q}}$ is a union of $2^{q_1 + \ldots + q_{d-m}}$ $(m+1)$-dimensional planes. These planes are parallel and located obliquely in $[0,1]^d$ (see Fig. \ref{fig:4}a).

If $\mathbf{x}^{*}$ is the "right" preimage of $\mathbf{t}_+$ under the mapping \eqref{otobr-F} for some dyadic rational point $\mathbf{t}$, then $Q_{m, \mathbf{q}}^{\mathbf{x}}$ is a set of planes parallel to $Q_{m, \mathbf{q}}$ (see Fig. \ref{fig:4}b).

If $\mathbf{x}^{*}$ is the "left" preimage of $\mathbf{t}_-$, then $Q_{m, \mathbf{q}}^{\mathbf{x}}$ is a set of planes parallel to $Q_{m, \mathbf{q}}^{\mathbf{1}}$ (see Fig. \ref{fig:4}c).

In the case of a dyadic irrational point $\mathbf{x}^{*}$, the geometric interpretation of the set $Q_{m, \mathbf{q}}^{\mathbf{x}}$ becomes more complicated (see Fig. \ref{fig:4}d).
\end{remark}

\begin{remark} $P^{\mathbf{x}}_m = \{\mathbf{x}\} \times \mathbb{G}^m$ is an $m$-dimensional plane parallel to the coordinate planes (see Fig. \ref{fig:5}).
\end{remark}

We will call the sets defined by formula \eqref{all} {\it dyadic planes} on the group $\mathbb{G}^d$.

\subsection{On the connection between quasimeasures on the group $\mathbb{G}^d$ and Walsh series}
\label{p-2-2}
The following two theorems establish a connection between the convergence of special cubic sums of a series and certain properties (\eqref{Eq:1:2} and \eqref{Eq:2:2}) of the generated quasimeasure. Conditions \eqref{Eq:1:2} and \eqref{Eq:2:2} can naturally be interpreted as some kind of continuity of the quasimeasure generated by the series. The choice of the type of continuity depends on the type of convergence of the series under consideration. Thus, under the assumption of rectangular convergence of the series, the generated quasimeasure is continuous in the sense of Sacks (i.e., $\tau(\Delta^{(k)}) \rightarrow 0$ as $\mu_d(\Delta^{(k)}) \rightarrow 0$).
In the case of $\lambda$-convergence and even more so in the case of convergence over cubes, the convergence of the series does not guarantee such continuity, and there is a need for more subtle types of continuity. Such continuities were introduced by M.G. Plotnikov (see, e.g., \cite{plotnikov-07-1}, \cite{plotnikov-2010}, \cite{MP-EMJ-2019}). In \cite{plotnikov-07-1}, local "chessboard"$\ $ continuity was considered, which, as proved in \cite{plotnikov-skvortsov}, coincides with classical continuity in the one-dimensional case, and in \cite{plotnikov-2010}, nonlocal "chessboard"$\ $ continuity was considered. Other types of continuity were considered in \cite{MP-EMJ-2019} and essentially in \cite{kazakova-plotnikov-2025-3}.

For more details on the continuity of quasimeasures, see the survey paper \cite{plotnikov-skvortsov}.

Condition \eqref{Eq:1:2} defines an intermediate continuity between nonlocal
"chessboard"$\ $ continuity and classical local continuity at a point. Essentially, continuity is considered not at a point, but in some lower-dimensional plane parallel to the coordinate planes, and on this plane the quasimeasure is taken with signs according to the "chessboard"$\ $order.

Property \eqref{Eq:2:2} of a quasimeasure corresponds to a type of nonlocal continuity that is structurally different from the "chessboard"$\ $one.

Note that the nonlocal types of continuity of a quasimeasure are essentially the Walsh--Fourier coefficients of this quasimeasure restricted to some dyadic cube (see \cite{kazakova-plotnikov-2025-3}). Similar quantities are sometimes called local Fourier coefficients (see, e.g., \cite{golubov-efimov-skvortsov}, formula (9.2.4)).

Theorems \ref{th-2-1} and \ref{th-2-2} generalize Theorem 2 from \cite{plotnikov-2010} in different directions.
\begin{theorem}
\label{th-2-1}
Let an arbitrary set $\{i_1, \ldots, i_m\} \subset \{1, \ldots, d\}$ of cardinality $ m \le d-1$,
a Borel set $A \subset \mathbb{G}^d$ of positive measure, and a series $(S)$ of the form \eqref{series} be given.
Assume that for all natural numbers $s$, for the partial sums of this series at each point $\mathbf{g} \in A$, there exist (finite) limits    
\begin{equation} 
\label{Eq:1:1} 
\lim_{k \rightarrow \infty} S_{ 2^{k} {\bf 1} + 2^s {\bf 1}}  ( \mathbf{g} ) = \lim_{k \rightarrow \infty} S_{ 2^{k} {\bf 1}}  ( \mathbf{g} ) 
\end{equation}
Then for the quasimeasure $\tau$ generated by the series $(S)$, the following holds 
\begin{equation} 
\label{Eq:1:2}
\lim\limits_{k \to \infty} 2^{mk}
\int\limits_{\Delta \times \Delta^{(k)}(\boldsymbol{\xi})} R_{k {\bf 1}} (\mathbf{g}^{*})d \tau = 0,
\end{equation}
where $\mathbf{g}^{*}$ is defined in \eqref{g*}, and $\boldsymbol{\xi} \in \mathbb{G}^m$ and  $\Delta \in \mathbb{G}^{d-m}$ are such that for every partition of the set ${i_1, \ldots, i_m} = J \sqcup \overline{J}$, the following condition holds
 \begin{equation} \label{Eq:1:11}
\mu_{d+|J|-m}\left(\left[\Delta \times \Delta(\boldsymbol{\xi}^J) \times \boldsymbol{\xi}^{\overline{J}}\right] \cap A\right) > 0.
\end{equation}
Here $\boldsymbol{\xi}^{J} \in \mathbb{G}^{|J|}$ and $\boldsymbol{\xi}^{\overline{J}} \in \mathbb{G}^{|\overline{J}|}$ correspond to the coordinates of the vector $\boldsymbol{\xi}$ belonging to the sets $J$ and $\overline{J}$, respectively; $\Delta(\boldsymbol{\xi}^J)$ is a $|J|$-dimensional cube containing the point $\boldsymbol{\xi}^J$, of the same rank as $\Delta$. The set $\Delta \times \Delta(\boldsymbol{\xi}^J) \times \boldsymbol{\xi}^{\overline{J}}$ is a subset of some $(d+|J|-m)$-dimensional plane parallel to the coordinate planes.
\end{theorem}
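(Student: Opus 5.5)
The plan is to follow the scheme of Theorem~2 of \cite{plotnikov-2010}: express the integral in \eqref{Eq:1:2} as a signed average of differences $S_{2^k\mathbf{1}+2^s\mathbf{1}}-S_{2^k\mathbf{1}}$ taken at finitely many points of $A$. Hypothesis \eqref{Eq:1:1} says exactly that each such difference tends to $0$ on $A$. Throughout, fix $s$ equal to the rank of $\Delta$.

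\emph{Step 1 (kernel expansion).} By \eqref{Eq:Canon-Iso}, for $k>s$,
\[
S_{2^k\mathbf{1}+2^s\mathbf{1}}(\mathbf{g})-S_{2^k\mathbf{1}}(\mathbf{g})=\int\bigl(D_{(2^k+2^s)\mathbf{1}}-D_{2^k\mathbf{1}}\bigr)(\mathbf{g}\oplus\mathbf{h})\,d\tau(\mathbf{h}).
\]
In each coordinate we write $D_{2^k+2^s}=D_{2^k}+R_kD_{2^s}$, using \eqref{Eq:WFS-2}. Expanding the product \eqref{Eq:WFS-11} gives a sum over subsets $T\subset\{1,\dots,d\}$, $T\neq\emptyset$, of the terms
\[
\prod_{l\in T}R_kD_{2^s}(g^l\oplus h^l)\prod_{l\notin T}D_{2^k}(g^l\oplus h^l).
\]
By \eqref{Eq:WFS-3}, the term with $T=\{j_1,\dots,j_{d-m}\}$ integrates to
\[
\pm\,2^{s(d-m)}2^{mk}\int_{\Delta^{(s)}(\mathbf{g}^*)\times\Delta^{(k)}(\mathbf{g}_*)}R_{k\mathbf{1}}(\mathbf{h}^*)\,d\tau .
\]
This is precisely the quantity in \eqref{Eq:1:2} once $\mathbf{g}^*\in\Delta$ and $\Delta^{(k)}(\mathbf{g}_*)=\Delta^{(k)}(\boldsymbol{\xi})$. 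The sign is $R_{k\mathbf{1}}(\mathbf{g}^*)$.

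\emph{Step 2 (killing the other terms by shifts).} Shifting a coordinate by $e_k$ changes the sign of $R_k$ but leaves $D_{2^k}$ and $D_{2^s}$ unchanged, since $s<k$. This is the mechanism of Lemma~\ref{lem-1} and Lemma~3 of \cite{plotnikov-2010}. So we sum the differences over shifts $\mathbf{g}\oplus\mathbf{e}^{\boldsymbol{\sigma}}_k$, with signs chosen as characters of $\boldsymbol{\sigma}$. Using signs that are nontrivial only on the $\mathbf{g}^*$-coordinates and on a chosen $J\subset\{i_1,\dots,i_m\}$ annihilates every $T$ except those with $T\supset\{j_1,\dots,j_{d-m}\}$. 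The leftover terms have $T\cap\{i_1,\dots,i_m\}=J\neq\emptyset$: in the $J$-coordinates the $D_{2^k}$ factor is replaced by $R_kD_{2^s}$. I plan to remove these by downward induction (inclusion--exclusion) on $|J|$. A coordinate in $J$ carrying $D_{2^s}$ localizes only to rank $s$, so in those coordinates the relevant base points need only lie in $\Delta(\boldsymbol{\xi}^J)$. Coordinates in $\overline{J}$, where the $D_{2^k}$ factor survives, must be pinned at $\boldsymbol{\xi}^{\overline{J}}$ to produce $\Delta^{(k)}(\boldsymbol{\xi})$ for all $k$. This is exactly why \eqref{Eq:1:11} is required for every partition $J\sqcup\overline{J}$. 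The outcome should be an identity expressing
\[
2^{mk}\int_{\Delta\times\Delta^{(k)}(\boldsymbol{\xi})}R_{k\mathbf{1}}\,d\tau
\]
as a finite signed combination, with coefficients bounded independently of $k$, of differences $S_{2^k\mathbf{1}+2^s\mathbf{1}}-S_{2^k\mathbf{1}}$ at points of $\Delta\times\Delta(\boldsymbol{\xi}^J)\times\{\boldsymbol{\xi}^{\overline{J}}\}$ shifted by elements $\mathbf{e}^{\boldsymbol{\sigma}}_k$.

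\emph{Step 3 (choosing points in $A$).} For each $J$, the slice
\[
A_J:=\bigl[\Delta\times\Delta(\boldsymbol{\xi}^J)\times\boldsymbol{\xi}^{\overline{J}}\bigr]\cap A
\]
has positive $(d+|J|-m)$-dimensional measure by \eqref{Eq:1:11}. By Egorov's theorem we pass to a subset of positive measure on which \eqref{Eq:1:1}, with this fixed $s$, holds uniformly. We then apply Lemma~\ref{lem-2} in dimension $d+|J|-m$, with $l=1$ and the shifts restricted to the free coordinates. For all large $k$ this yields a base point all of whose $\mathbf{e}^{\boldsymbol{\sigma}}_k$-shifts lie in the uniform set. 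Hence every difference appearing in Step~2 tends to $0$, and \eqref{Eq:1:2} follows.

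The main obstacle is Step~2: producing the exact inclusion--exclusion identity. The shifts must not move points out of the slice where $A$ has positive measure. They must also keep $\Delta^{(k)}(\boldsymbol{\xi})$ unchanged in the pinned coordinates, while the localization from $D_{2^s}$ versus $D_{2^k}$ changes from term to term. I would first carry out the case $m=1$, $d=2$ by hand to fix the signs, and then induct on $|J|$.
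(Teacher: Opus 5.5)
Your proposal is correct and is essentially the paper's argument. The paper also sums $S_{(2^k+2^s)\mathbf 1}-S_{2^k\mathbf 1}$ over $e_k$-shifts in the free coordinates, taken at Egorov/Lemma~4-type points of the slice $\Delta\times\boldsymbol\xi$, using unsigned sums over $\Sigma^{d-m}_2$ in place of your character-weighted sums, which is equivalent. It expands $D_{2^k+2^s}=D_{2^k}+R_kD_{2^s}$ in the pinned coordinates and gets the target term plus cross terms $Q^J_k=2^{(m-|J|)k}\int_{\Delta\times\Delta(\boldsymbol\xi^J)\times\Delta^{(k)}(\boldsymbol\xi^{\overline J})}R_{k\mathbf 1}(\mathbf g^*,\mathbf g^J_*)\,d\tau$ for $J\neq\emptyset$. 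The paper disposes of each $Q^J_k$ by induction on $m$: each is the same statement with pinned set $\overline J$, and the base case $m=0$ is Theorem~2 of \cite{plotnikov-2010}. That induction is exactly your downward induction on $|J|$ written with the slices $A_J$ used directly.
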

\begin{proof}
We carry out the proof by induction on $m$. The base of induction (the case $m=0$) is Theorem 2 from the paper \cite{plotnikov-2010}. Suppose the statement holds for all $0 \le l < m$. We show that then it also holds for $m$. For a fixed $\boldsymbol{\xi} \in \mathbb{G}^m$, denote
\[
S^{*}_{N}(\mathbf{g}^{*}) = \sum_{\mathbf{n}^{*}<N\cdot {\bf 1}} \left[\sum_{\mathbf{n}_{*}<N\cdot {\bf 1}} c_{\bf n} W_{\mathbf{n}_{*}} (\boldsymbol{\xi})\right] W_{\mathbf{n}^{*}}(\mathbf{g}^{*}).
\]
Consider the section of the set $A$
\[
A^{*} := A \cap \left[\mathbb{G}^{d-m}\times \boldsymbol{\xi}\right].
\]
The set $A^{*}$ is also a Borel set, and the following holds
\begin{equation} \label{Eq:1:3}
    \lim_{k \rightarrow \infty} S^{*}_{2^{k} +2^s} (\mathbf{g}^{*}) =  \lim_{k \rightarrow \infty} S^{*}_{2^{k}} (\mathbf{g}^{*}), \quad \text{for all \ } \mathbf{g}^{*} \in A^{*} \text{\ and \ } s\in \mathbb{N}.
\end{equation}
Let $s$ be the rank of the cube $\Delta$ from the condition of the theorem. By Egorov's theorem, there exists a set $E^{*} \subset A^{*}$ such that the convergence in \eqref{Eq:1:3} is uniform. Then, by Lemma 4 from \cite{plotnikov-2010}, there exists a point $\boldsymbol{\eta} \in \mathbb{G}^{d-m}$ such that
\[
\boldsymbol{\eta}  \oplus {\bf e}^{\boldsymbol{\sigma}}_{k+1} \in E^{*} \quad \text{for $\boldsymbol{\sigma} \in \Sigma^{d-m}_2$. }
\]
Denote
\[T_k(\boldsymbol{\eta}, \boldsymbol{\xi}) = \sum_{\boldsymbol{\sigma} \in \Sigma_2^{d-m}} \left[ S_{2^k + 2^s} (\boldsymbol{\eta} \oplus \mathbf{e}_{k+1}^{\boldsymbol{\sigma}}, \boldsymbol{\xi}) - S_{2^k} (\boldsymbol{\eta} \oplus \mathbf{e}_{k+1}^{\boldsymbol{\sigma}}, \boldsymbol{\xi}) \right].
\]
By Lemma 3 of \cite{plotnikov-2010}
\[
T_{k}(\boldsymbol{\eta}, \boldsymbol{\xi}) = 2^{d-m} \sum_{2^{k} \leq \mathbf{n}^* < 2^{k} + 2^s} \left[ \sum_{\mathbf{n}_* < 2^{k} + 2^s} c_{\mathbf{n}} W_{\mathbf{n}_*} (\boldsymbol{\xi}) \right] W_{\mathbf{n}^*}(\boldsymbol{\eta}).
\]
Considering the choice of the point $\boldsymbol{\eta}$ and the uniform convergence on the set $E^{*}$, we obtain
\begin{equation} \label{Eq:1:4}
    T_{k}(\boldsymbol{\eta}, \boldsymbol{\xi}) \rightarrow 0.
\end{equation}

Using formulas \eqref{Eq:WFS-2} and \eqref{Eq:WFS-3} for the Dirichlet kernel, we obtain that
\begin{align*}
&T_k(\boldsymbol{\eta}, \boldsymbol{\xi}) = \\
&\int\limits_{\mathbb{G}^d} \left( \prod_{\ell=1}^{d-m} [D_{2^k + 2^s} ({\eta}_{j_{\ell}} \oplus {g}_{j_{\ell}}) - D_{2^k} ({\eta}_{j_{\ell}} \oplus {g}_{j_{\ell}})] \right) \left( \prod_{\ell=1}^{m} D_{2^k + 2^s} ({\xi}_{i_{\ell}} \oplus {g}_{i_{\ell}}) \right) d{\tau}(\mathbf{g})= \\
&R_{k {\bf 1}}(\boldsymbol{\eta}) \int\limits_{\mathbb{G}^d} 2^{(d-m)s}  R_{k{\bf 1}}(\mathbf{g}^*)I(\mathbf{g}^* \in \Delta)\times \\
&\times \sum_{q=0}^m 2^{(m-q)k + q \cdot s}  \sum_{J: |J|=q} R_{k {\bf 1}}(\boldsymbol{\xi}^{J}) R_{k {\bf 1}}(\mathbf{g}^J_*) I(\mathbf{g}_*\in \Delta^{(s)}(\boldsymbol{\xi}^J) \times \Delta^{(k)}(\boldsymbol{\xi}^{\overline{J}}))  d{\tau} = \\
\end{align*}
\vspace{-12mm}
\begin{equation} \label{T_k}
    R_{k {\bf 1}}(\boldsymbol{\eta})  \sum_{q=0}^m \sum_{J: |J| =q} 2^{s(d-(m-q))}  R_{k {\bf 1}}(\boldsymbol{\xi}^{J}) Q^J_k,
\end{equation}
where \begin{equation*} 
    Q^J_k = 2^{(m-q)k}\int\limits_{\mathbb{G}^d} R_{k {\bf 1}}(\mathbf{g}^*, \mathbf{g}^{J}_*) I(\mathbf{g}^* \in \Delta)I(\mathbf{g}_*\in \Delta^{(s)}(\boldsymbol{\xi}^J) \times \Delta^{(k)}(\boldsymbol{\xi}^{\overline{J}})) d{\tau} 
\end{equation*} 
and $\sum_{J: |J| =q}$ is taken over all subsets $J \subset \{i_1, \ldots, i_m\}$ of cardinality $q$.

Since for all sets $J$ of positive cardinality the cardinality of the complement $\overline{J}$ is less than $m$, then by the induction hypothesis, the following holds

\begin{equation}
\label{Eq:1:5}
    Q^J_k =  2^{(m-q)k} \int\limits_{\Delta \times \Delta(\boldsymbol{\xi}^{J})\times\Delta^{(k)}(\boldsymbol{\xi}^{\overline{J}})} R_{k {\bf 1}}(\mathbf{g}^*, \mathbf{g}^{J}_*) d{\tau}  \rightarrow 0 \quad \text{as $k\rightarrow\infty$}.
\end{equation}

According to \eqref{T_k}, \(Q^{\emptyset}_k\) is a finite linear combination of the quantity \(T_k\) and the quantities \(Q^{J}_k\) for all nonempty \(J\), and the coefficients of this linear combination are bounded. Taking into account \eqref{Eq:1:4} and \eqref{Eq:1:5}, we obtain
\[
    Q^{\emptyset}_k =  2^{mk} \int\limits_{\Delta \times \Delta^{(k)}(\boldsymbol{\xi})} R_{k {\bf 1}}(\mathbf{g}^*) d{\tau}  \rightarrow 0 \quad \text{as $k\rightarrow\infty$}.
\]
Thus, formula \eqref{Eq:1:2} is proved. This completes the proof.
\end{proof}
\begin{remark}
    It can be seen from the proof that in Theorem \ref{th-2-1} the sequence of indices
\(\{k\}_{k=1}^{\infty}\) can be replaced by any of its subsequences
\(\{k_i\}_{i=1}^{\infty}\).
\end{remark}
We give an example of a set $A$ that satisfies condition \eqref{Eq:1:11} for any cube $\Delta \in \mathbb{G}^{d-m}$ and any point $\boldsymbol{\xi} \in \mathbb{G}^m$.
\begin{exmpl} \label{exmpl} Let the set $\mathbb{G}^d \setminus A$ be a dyadic plane defined in \eqref{all}.

In this case, for every partition $\{i_1, \ldots, i_m\} = J \sqcup \overline{J}$ and every point $\boldsymbol{\xi} \in \mathbb{G}^m$, the set $P := \left[\mathbb{G}^d \setminus A\right] \cap \left[ \mathbb{G}^{d+|J|-m} \times \boldsymbol{\xi}^{\overline{J}} \right]$ is the intersection of a dyadic $m$-dimensional and a dyadic $(d+|J|-m)$-dimensional plane, and these planes do not coincide and are not embedded in one another, so their intersection is a dyadic plane of strictly lower dimension. Consequently,
\[
\mu_{d+|J|-m}\left(P \cap [\Delta\times \Delta(\boldsymbol{\xi})]\right) = 0 \quad \text{for all $\Delta \in \mathbb{G}^{d-m}$}.
\]
Passing to the complement of $P$ in the plane $\mathbb{G}^{d+|J|-m} \times \boldsymbol{\xi}^{\overline{J}}$, we obtain \eqref{Eq:1:11}.

\end{exmpl}

Theorem \ref{th-2-2} is an analogue of Theorem 4.4 from \cite{kazakova-plotnikov-2025-3}, which considered a stronger type of convergence, namely $\lambda$-convergence, and at the same time a sequence $\mathbf{N}_i$ with less restrictive conditions than in Theorem \ref{th-2-2}.

\begin{theorem} \label{th-2-2} 
Let a sequence $( N_i \in \mathbb{N}, \, i \in \mathbb{N})$,
a Borel set $A \subset \mathbb{G}^d$ of positive measure, and a series $(S)$ of the form \eqref{series} be given.
Suppose the following holds.  

$\circ$
The smallest index of a nonzero dyadic coefficient of the number $N_i$ tends to infinity as $i \to \infty$ (property $P$).

$\circ$
For all natural numbers $s$, for the partial sums of this series at each point $\mathbf{g} \in A$, there exist (finite) limits     
\begin{equation} 
\label{Eq:2:1} 
\lim_{i \rightarrow \infty} S_{ N_i  {\bf 1} + 2^s  {\bf 1}}  ( \mathbf{g} ) = \lim_{i \rightarrow \infty} S_{ N_i {\bf 1}}  ( \mathbf{g} ) 
\end{equation}

$\circ$ The number of nonzero coefficients in the expansions of $N_i$ is bounded:
\begin{equation} \label{Eq:2-0}
    \overline{\lim\limits_{i \rightarrow \infty}} \#N_i< \infty
\end{equation}

Then   
\begin{equation} 
\label{Eq:2:2}
\lim\limits_{i \to \infty} 
\int\limits_{\Delta} W_{N_i {\bf 1}} d \tau = 0 
\end{equation}
for any dyadic cube $\Delta$ such that $\mu (\Delta \cap A) > 0;$
$\tau$ is the quasimeasure generated by the original series.
\end{theorem}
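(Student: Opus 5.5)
The plan mirrors the proof of Theorem 2 of \cite{plotnikov-2010}, with Lemma \ref{lem-1} replacing Lemma 3 of that paper and Lemma \ref{lem-2} replacing Lemma 4. Fix a dyadic cube $\Delta$ of rank $s$ with $\mu(\Delta\cap A)>0$. By \eqref{Eq:2-0} we may pass to a tail of the sequence on which $\#N_i\le L$. Splitting into finitely many subsequences, it suffices to treat the case $\#N_i=l$ for a fixed $l\le L$. Write $N_i=2^{k_1}+\ldots+2^{k_l}$ with $k_1>\ldots>k_l$, where the $k_j=k_j(i)$ depend on $i$. By property $P$, $k_l(i)\to\infty$, so for large $i$ we have $k_l>s$, and in particular $r:=2^s-1<2^{k_l}$. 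By Egorov's theorem, applied to the countably many differences $S_{N_i\mathbf 1+2^s\mathbf 1}-S_{N_i\mathbf 1}$ (only the fixed $s$ is needed), there is a Borel $E\subset\Delta\cap A$ with $\mu(E)>0$ on which this difference tends to $0$ uniformly.

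Next, apply Lemma \ref{lem-2} to $E$, the cube $\Delta$, and this $l$. It gives $k_0$ such that for each large $i$ (so that $k_l(i)+1\ge k_0$) there is a point $\mathbf g_i\in\Delta$ with $\mathbf g_i\oplus\bigoplus_j\mathbf e^{\boldsymbol\sigma^j}_{k_j+1}\in E$ for all choices of $\boldsymbol\sigma^j$. Here we use the indices $k_j+1$ in Lemma \ref{lem-2}. Then apply Lemma \ref{lem-1} with $M_1=N_i$ and $M_2=N_i+2^s-1$. The left side of \eqref{eq-0-0} is a sum of $2^{l(d-1)}$ differences at points of $E$, so it tends to $0$, and hence
\[
T_i:=\sum_{N_i\mathbf 1\le \mathbf n\le (N_i+2^s-1)\mathbf 1} c_{\mathbf n}W_{\mathbf n}(\mathbf g_i)\to 0 .
\]

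Now express $T_i$ through $\tau$. We have $T_i=\int_{\mathbb G^d}\prod_{q=1}^d\big(D_{N_i+2^s}-D_{N_i}\big)(g_i^q\oplus g^q)\,d\tau(\mathbf g)$. Since $2^s\le 2^{k_l}$ divides $N_i$, the numbers $n\in N_i:N_i+2^s-1$ are exactly $n=N_i+m$ with $m<2^s$. Hence $W_n=W_{N_i}W_m$, and $D_{N_i+2^s}-D_{N_i}=W_{N_i}D_{2^s}$. By \eqref{Eq:WFS-3}, $\prod_q 2^s\,I(g_i^q\oplus g^q\in\Delta_0^{(s)})=2^{sd}I(\mathbf g\in\Delta)$, because $\mathbf g_i\in\Delta$. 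Therefore
\[
T_i=2^{sd}\,W_{N_i\mathbf 1}(\mathbf g_i)\int_{\Delta}W_{N_i\mathbf 1}\,d\tau .
\]
Since $|W_{N_i\mathbf 1}(\mathbf g_i)|=1$ and $2^{sd}$ is fixed, this gives \eqref{Eq:2:2} along each subsequence, and hence for the whole sequence.

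The main obstacle is the uniform applicability of Lemma \ref{lem-2}. It must produce one threshold $k_0$ that works for all configurations $k_1>\ldots>k_l\ge k_0$ simultaneously. This is exactly why the bound \eqref{Eq:2-0} on $\#N_i$ is needed: without it, $l$ would be unbounded and the density threshold $1-2^{-ld}$ in Lemma \ref{lem-2} could not be met. A second point to check carefully is that the hypothesis $r<2^{k_l}$ of Lemma \ref{lem-1} holds; it does for large $i$ by property $P$. One should also make sure that the shift indices $k_j+1$ in Lemma \ref{lem-1} match those supplied by Lemma \ref{lem-2}. Finally, for each $i$, $\Sigma_2^d\subset\Sigma^d$, so the points produced by Lemma \ref{lem-2} include all those needed in Lemma \ref{lem-1}.
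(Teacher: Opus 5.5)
Your proposal is correct and follows essentially the same route as the paper. You use Egorov's theorem plus Lemma~\ref{lem-2} to place the shifted points in a set of uniform convergence, Lemma~\ref{lem-1} to isolate the diagonal block $N_i\mathbf 1\le\mathbf n<(N_i+2^s)\mathbf 1$, and the identity $D_{N_i+2^s}-D_{N_i}=W_{N_i}D_{2^s}$ (valid by property $P$) to rewrite that block as $2^{sd}W_{N_i\mathbf 1}(\mathbf g_i)\int_\Delta W_{N_i\mathbf 1}\,d\tau$. Your write-up is in fact slightly more careful than the paper's: the point $\mathbf g_i\in\Delta$ depends on $i$, you split the sequence by the value of $\#N_i$ so both lemmas are applied with a fixed $l$, and taking $M_2=N_i+2^s-1$ makes the index block match hypothesis \eqref{Eq:2:1} exactly.
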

\begin{proof}
Fix a dyadic cube $\Delta = \Delta^1 \times \ldots \times \Delta^d$ from the statement of the theorem
and let $s$ be its rank.
Set
\[ 
V_i 
:= 
\{ 
\mathbf{m} \in \mathbb{N}^d 
\colon 
N_i \mathbf{1} \le \mathbf{m} \le N_i \mathbf{1} + 2^s \mathbf{1} \}. 
\]
Since \eqref{Eq:2-0} holds, we can apply Lemmas \ref{lem-1} and \ref{lem-2}.
Take an arbitrary $\varepsilon > 0$. Then, using Lemmas \ref{lem-1}, \ref{lem-2} and repeating the beginning of the proof of Theorem 2 from \cite{plotnikov-2010}, we find a point $\mathbf{g} \in A$ such that
  
\begin{equation} 
\label{Eq:U-01}
\bigg| 
    \sum_{ \mathbf{n} \in V_i} 
    c_{ \mathbf{n} } W_{ \mathbf{n} }( \mathbf{g} ) 
\bigg|< \varepsilon   
\end{equation}
for all sufficiently large $i$.
Moreover,
\begin{equation} 
\label{Eq:U-02} 
\begin{split}
\sum_{ \mathbf{n} \in V_i} 
c_{ \mathbf{n} } W_{ \mathbf{n} }( \mathbf{g} ) 
& 
= 
\sum_{ \mathbf{n} \in V_i} 
W_{ \mathbf{n} }( \mathbf{g} )
\int\limits_{\mathbb{G}^d} 
W_{ \mathbf{n} } ( \mathbf{x} ) d \tau ( \mathbf{x} )  
= 
\int\limits_{\mathbb{G}^d} 
\sum_{ \mathbf{n} \in V_i}  
W_{\mathbf{n}} ( \mathbf{x} \oplus \mathbf{g} ) d \tau ( \mathbf{x} )
\\ 
& 
= 
\int\limits_{\mathbb{G}^d} 
\prod_{j=1}^d 
\big[ D_{ N_i + 2^q } ( x^j \oplus g^j ) - D_{ N_i } ( x^j \oplus g^j ) \big]
d \tau ( \mathbf{x} ). 
\end{split} 
\end{equation}

Since property $P$ holds, we can assume that the indices of all nonzero dyadic coefficients of the numbers $N_i$ are greater than $q$.
Using the last condition of the theorem and formulas \eqref{Eq:WFS-3} and \eqref{Eq:WFS-5}, we obtain
\begin{equation} 
\label{Eq:U-03}
\begin{split} 
D_{ N_i + 2^q }( x^j \oplus g^j ) - D_{N_i}( x^j \oplus g^j ) 
& 
= 
2^q W_{N_i} ( x^j \oplus g^j ) 
\cdot 
I ( x^j \oplus g^j \in \Delta^{(s)}_0 ) 
\\ 
& 
= 2^q W_{N_i} ( x^j \oplus g^j ) \cdot I( x^j \in \Delta^j).
\end{split} 
\end{equation} 

For large $i$, we obtain
\[
\begin{split} 
\left|
    2^{qd} \int\limits_{\Delta} W_{ \mathbf{N} } d \tau 
\right| 
& 
\stackrel{\eqref{Eq:U-03}}{=}
\left|
    \int\limits_{\mathbb{G}^d} 
    \prod_{i=1}^d 
    \big[ D_{ N_i + 2^q } ( x^j \oplus g^j ) - D_{ N_i } ( x^j \oplus g^j ) \big]
    d \tau ( \mathbf{x} ) 
\right| 
\\ 
& 
\stackrel{\eqref{Eq:U-02}}{=}
\left|
    \sum_{ \mathbf{n} \in V_i} 
    \tau_{ \mathbf{n} } W_{ \mathbf{n} }( \mathbf{g} )
\right| 
\stackrel{\eqref{Eq:U-01}}{<} \varepsilon.
\end{split}
\]
Given the arbitrariness of $\varepsilon > 0$, \eqref{Eq:2:2} holds. The theorem is proved.
\end{proof}

\subsection{On $U$-sets for convergence over cubes}
\label{p-2-3}
Let $K= ( k_i \in \mathbb{N}, \, i \in \mathbb{N})$ be some fixed sequence. Consider the set
\begin{equation} \label{RD_m(K)}
    WD^{d-m}(2^K  \mathbf{1}) \times \mathbb{G}^{m},
\end{equation}
where $WD^{d-m}(2^K  \mathbf{1})$ is a Dirichlet set (see \eqref{Eq:U-05}) defined for a sequence of the form \( (2^{k_i}\mathbf{1}, \, i \in \mathbb{N}) \).

The following theorems \ref{th-3-1} and \ref{th-3-5} generalize in different directions Theorem 11 from the paper \cite{plotnikov-2010}, where the result was established for the set $WD^{d}(2^K \mathbf{1})$. Also, Theorem \ref{th-3-5} is an analogue of Theorem 4.5 from \cite{kazakova-plotnikov-2025-3}.
\begin{theorem}
\label{th-3-1} The set $WD^{d-m}(2^K  \mathbf{1}) \times \mathbb{G}^{m}$, defined by formula \eqref{RD_m(K)}, is a $U$-set for the $d$-dimensional Walsh system under convergence over cubes.
\end{theorem}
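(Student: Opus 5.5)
Set $E := WD^{d-m}(2^K\mathbf{1})\times\mathbb{G}^m$, and let $\tau$ be the quasimeasure generated by a series $(S)$ that converges over cubes to zero on $\mathbb{G}^d\setminus E$. We want to show $\tau\equiv 0$; by the canonical isomorphism \eqref{Eq:Canon-Iso} this gives $c_{\mathbf n}=0$ for all $\mathbf n$. The set $E$ is closed: it is an intersection of the sets $\{W_{2^{k_i}\mathbf 1}(\mathbf g^*)=1\}$, which are unions of dyadic cubes. By \S\ref{Subsub:F-Tau-05}, $\mathrm{supp}\,\tau\subset E$. If $WD^{d-m}(2^K\mathbf 1)$ has positive Haar measure we fall back on Theorem D after the standard reduction, so the interesting case is $\mu_{d-m}(WD^{d-m}(2^K\mathbf 1))=0$. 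Then $A:=\mathbb{G}^d\setminus E$ has full measure, and every section of $A$ by a coordinate plane $\mathbb{G}^{d+|J|-m}\times\boldsymbol\xi^{\overline J}$ still has full measure in that plane, because $E$ is a product whose first factor is null. Hence condition \eqref{Eq:1:11} holds for every cube $\Delta\subset\mathbb{G}^{d-m}$ and every $\boldsymbol\xi\in\mathbb{G}^m$, exactly as in Example~\ref{exmpl}. Convergence to zero on $A$ also gives \eqref{Eq:1:1}, since both limits equal $0$. We may pass to the subsequence $(k_i)$ by the Remark after Theorem~\ref{th-2-1}. We can therefore apply Theorem~\ref{th-2-1} along $k_i$:
\[
\lim_{i\to\infty}2^{mk_i}\int_{\Delta\times\Delta^{(k_i)}(\boldsymbol\xi)}R_{k_i\mathbf 1}(\mathbf g^*)\,d\tau=0 .
\]

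Now use the Dirichlet structure. On $\mathrm{supp}\,\tau\subset E$ we have $W_{2^{k_i}\mathbf 1}(\mathbf g^*)=R_{k_i\mathbf 1}(\mathbf g^*)=1$. Apply Lemma~\ref{lem-4} to $\mathbf N=(2^{k_i}\mathbf 1,\mathbf 0)$ and the parallelepiped $P=\Delta\times\Delta^{(k_i)}(\boldsymbol\xi)$. The integral above then equals $\tau(\Delta\times\Delta^{(k_i)}(\boldsymbol\xi))$, so
\[
2^{mk_i}\,\tau\bigl(\Delta\times\Delta^{(k_i)}(\boldsymbol\xi)\bigr)\to0 \quad\text{for every } \Delta \text{ and every } \boldsymbol\xi .
\]
Lemma~\ref{lem-4} is stated for parallelepipeds, and here the rank $k_i$ of the lower factor changes with $i$. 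I expect this to be harmless, because the proof of Lemma~\ref{lem-4} simply refines $P$ into cubes of rank $k_i+1$.

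Suppose now that $\tau(\Delta_0)=C\ne0$ for some cube $\Delta_0=\Delta^*\times\Delta_*$ of rank $k_0$. Write the nested-selection argument of Lemma~\ref{lem-5} in terms of $|\tau|$, after multiplying $\tau$ by a unimodular constant if needed, and choose at each halving step a sub-parallelepiped carrying at least $2^{-m}$ of the mass. This yields $\boldsymbol\xi$ with $|\tau(\Delta^*\times\Delta^{(k)}(\boldsymbol\xi))|\ge 2^{-m(k-k_0)}|C|$ for all $k$. Equivalently, $2^{mk}|\tau(\Delta^*\times\Delta^{(k)}(\boldsymbol\xi))|\ge 2^{mk_0}|C|>0$, and taking $k=k_i$ contradicts the limit above. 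Hence $\tau$ vanishes on all cubes, and $(S)$ is the zero series.

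The main obstacle is the selection step. Lemma~\ref{lem-5} as stated uses a strict ``$>$'' inequality for a possibly complex $\tau$, so it has to be redone carefully: with real and imaginary parts, or with the pigeonhole principle applied to $|\tau|$ over the $2^m$ children, which gives $\sum|\tau(\text{child})|\ge|\tau(\text{parent})|$. A secondary point is to check that \eqref{Eq:1:11} really holds for all $\boldsymbol\xi$, including the lower-dimensional sections where $|J|<m$. This uses only that $E$ is a product whose first factor is null, and the case where that factor has positive measure is treated separately as indicated above.
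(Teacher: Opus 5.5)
Your argument follows the paper's proof. You pick $\boldsymbol\xi$ via Lemma~\ref{lem-5} and apply Theorem~\ref{th-2-1} along $(k_i)$ via the Remark following it. Then $R_{k_i\mathbf 1}(\mathbf g^*)=1$ on $\mathrm{supp}\,\tau\subset E$ together with Lemma~\ref{lem-4} gives the contradiction with the lower bound $2^{mk_0}|C|$.

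Your additions in the main case are sound:
\begin{itemize}
\item The section argument proves \eqref{Eq:1:11}, which the paper only asserts.
\item The pigeonhole on $|\tau|$ fixes the strict inequality and the tacit assumption that $\tau$ is real in Lemma~\ref{lem-5}.
\item The worry about Lemma~\ref{lem-4} is moot: it is applied for each fixed $i$ to the parallelepiped $\Delta^*\times\Delta^{(k_i)}(\boldsymbol\xi)$.
\end{itemize}

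What is wrong is how you dispose of the case $\mu_{d-m}(WD^{d-m}(2^K\mathbf 1))>0$ by ``Theorem D after the standard reduction''.
\begin{itemize}
\item The conditions $R_{k\mathbf 1}(\mathbf g^*)=1$ for distinct $k$ constrain disjoint sets of digits. So they are independent events of measure $1/2$, and the Dirichlet set has positive measure exactly when $K$ takes only finitely many values.
\item In that case it is a finite union of dyadic cubes of rank $r:=\max_i k_i+1$ containing $\mathbf 0$, so $E\supset\Delta^{(r)}_{\mathbf 0}$.
\item The nonzero Walsh polynomial $2^{-rd}D_{2^r\mathbf 1}(\mathbf g)=\mathrm{I}(\mathbf g\in\Delta^{(r)}_{\mathbf 0})$ has cubic partial sums $S_N$, $N\ge 2^r$, vanishing everywhere outside $E$.
\end{itemize}
So in this case $E$ is an $M$-set, and no reduction can work. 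Theorem D also presupposes unbounded $K$, and in any event it concerns $WD^d$, not a product with $\mathbb{G}^m$.

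The correct resolution is that the statement must be read with $K$ unbounded. The paper assumes this tacitly when it uses the Remark after Theorem~\ref{th-2-1}, which needs $k_i\to\infty$. Under that reading the Dirichlet set is automatically null, and your ``interesting case'' is the only case. You should then pass explicitly to a strictly increasing subsequence of $K$ before invoking the Remark; $R_{k_i\mathbf 1}(\mathbf g^*)=1$ still holds on $\mathrm{supp}\,\tau$ along that subsequence.
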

\begin{proof}
    Suppose that there exists a non-identically zero series $(S)$ converging over cubes to zero outside the set $WD^{d-m}(2^K  \mathbf{1}) \times \mathbb{G}^{m}$. Then the quasimeasure $\tau$ generated by the series $(S)$ is not identically zero. Let $\tau$ be concentrated on a dyadic cube $\Delta$ of rank $k_0$. Without loss of generality,
\[
\tau(\Delta) = C >0. 
\]
By Lemma \ref{lem-5}, there exists a point $\boldsymbol{\xi} \in \mathbb{G}^{m}$ such that
$$ \tau(\Delta^* \times \Delta^{(k)}(\boldsymbol{\xi})) > \frac{1}{2^{m(k-k_0)}} C,$$
where $\Delta^*$ --- designation from \eqref{Delta*}.

Since in our case the conditions \eqref{Eq:1:1} and \eqref{Eq:1:11} of Theorem \ref{th-2-1} are satisfied, applying this theorem we obtain
\begin{equation} \label{eq-3-1}
    \lim_{j \rightarrow \infty}2^{mk_j} \int\limits_{\Delta^{*} \times \Delta^{(k_j)}(\boldsymbol{\xi})} R_{k_j {\bf 1}} ({\bf g}^{*}) d\tau \rightarrow 0.
\end{equation}

For ${\bf g} \in WD^{d-m}(2^K  \mathbf{1}) \times \mathbb{G}^{m}$ we have 
\begin{equation} \label{R_}
    R_{k_j {\bf 1}} ({\bf g}^{*})  = 1.
\end{equation}

The series $(S)$ converges to zero over cubes on $\mathbb{G}^d\setminus [WD^{d-m}(2^K  \mathbf{1}) \times \mathbb{G}^{m}]$, and therefore according to Sec. \ref{Subsub:F-Tau-05} $\mathrm{supp} \, \tau \subset WD^{d-m}(2^K  \mathbf{1}) \times \mathbb{G}^{m}$. Thus, \eqref{R_} holds for $\mathbf{g} \in \mathrm{supp} \, \tau$, and consequently by Lemma \ref{lem-4}
\[
 \left|2^{mk_j} \int\limits_{\Delta^{*} \times \Delta^{(k_j)}(\boldsymbol{\xi})} R_{k_j {\bf 1}} ({\bf g}^{*}) d\tau  \right|=  \left|2^{mk_j} R_{k_j {\bf 1}} ({\bf x}^{*}) \tau(\Delta^{*} \times \Delta^{(k_j)}(\boldsymbol{\xi}))\right| > 
\]
\[
>2^{mk_j} \frac{C}{2^{m(k_j - k_0)}} = 2^{mk_0} C.
\]
This contradicts formula \eqref{eq-3-1}. The theorem is proved.
\end{proof}

\begin{theorem} \label{th-3-5} Let $\mathbf{N}_i =  N_i {\bf 1}$, and let the sequence $( N_i \in \mathbb{N}, \, i \in \mathbb{N} )$ satisfy the conditions of Theorem \ref{th-2-2}.  
Then the set $WD^d(\mathbf{N})$ defined by formula \eqref{Eq:U-05} (see Fig. \ref{fig:2}b) is a $U$-set for the $d$-dimensional Walsh system under convergence over cubes. 
\end{theorem}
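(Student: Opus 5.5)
We argue by contradiction, in the same way as in the proof of Theorem \ref{th-3-1}, with Theorem \ref{th-2-2} taking the place of Theorem \ref{th-2-1}. Suppose a series $(S)$ of the form \eqref{series}, not identically zero, converges over cubes to zero on $\mathbb{G}^d \setminus WD^d(\mathbf{N})$.

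\emph{Step 1: the support.} The set $WD^d(\mathbf{N})$ is closed, being an intersection of the closed subgroups $\{\mathbf{g}\colon W_{N_i\mathbf{1}}(\mathbf{g})=1\}$. Hence, by Sec.~\ref{Subsub:F-Tau-05}, the generated quasimeasure $\tau$ satisfies $\mathrm{supp}\,\tau \subset WD^d(\mathbf{N})$. Since the series is not identically zero, neither is $\tau$. So there is a dyadic cube $\Delta$ with $\tau(\Delta)=C\neq 0$.

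\emph{Step 2: the set $A$.} Put $A:=\mathbb{G}^d\setminus WD^d(\mathbf{N})$. We must check that $A$ has positive measure, and in fact that $\mu(\Delta\cap A)=\mu(\Delta)>0$.
\begin{itemize}
\item It suffices to show $\mu(WD^d(\mathbf{N}))=0$. Since $N_i\ge1$, the character $W_{N_i\mathbf{1}}$ is nontrivial, so its kernel is a subgroup of index $2$ and has measure $1/2$.
\item By property $P$, infinitely many of the $N_i$ can be chosen so that the lowest dyadic digit of each exceeds the highest digit of the previous one. The corresponding characters are then independent (products of distinct Rademacher-type blocks).
\item Therefore the intersection of their kernels has measure $\prod 1/2 = 0$.
\end{itemize}

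\emph{Step 3: applying Theorem \ref{th-2-2}.} On $A$ the series converges over cubes to zero. Every subsequence of a convergent sequence converges to the same limit, so \eqref{Eq:2:1} holds: both limits in it equal $0$. The remaining hypotheses on $(N_i)$ are assumed in the statement. Theorem \ref{th-2-2} therefore gives $\int_\Delta W_{N_i\mathbf{1}}\,d\tau\to0$.

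\emph{Step 4: the contradiction.} For every $\mathbf{g}\in\mathrm{supp}\,\tau\cap\Delta$ we have $W_{N_i\mathbf{1}}(\mathbf{g})=1$ for all $i$, by the definition \eqref{Eq:U-05}. Lemma \ref{lem-4}, applied with the parallelepiped $P=\Delta$, then gives $\int_\Delta W_{N_i\mathbf{1}}\,d\tau=\tau(\Delta)=C\neq0$ for every $i$. This contradicts Step 3, so no such series exists and $WD^d(\mathbf{N})$ is a $U$-set.

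The main point needing care is Step 2, namely that the complement has positive measure in $\Delta$ so that Theorem \ref{th-2-2} applies. If the independence argument there proves awkward, a simpler route is available: the single kernel of $W_{N_1\mathbf{1}}$ already has measure $1/2$. Its complement is a union of dyadic cubes of rank about $\log_2 N_1 + 1$, since $W_{N_1\mathbf{1}}$ is constant on such cubes. One can then pick $\Delta$ inside such a cube, or argue through the measure-zero property of the whole intersection, which is all that is needed. The other steps are direct applications of the earlier results.
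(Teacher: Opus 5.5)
Your argument is correct and is essentially the paper's proof: $\mathrm{supp}\,\tau\subset WD^d(\mathbf{N})$, then Theorem~\ref{th-2-2} applied on a cube $\Delta$ with $\tau(\Delta)\neq 0$, then Lemma~\ref{lem-4} giving $\int_\Delta W_{N_i\mathbf{1}}\,d\tau=\tau(\Delta)$. Your Step~2 usefully makes explicit that $\mu(WD^d(\mathbf{N}))=0$, so $\mu(\Delta\cap A)=\mu(\Delta)>0$, which the paper uses without comment. Discard the suggested fallback of taking $\Delta$ inside the set where $W_{N_1\mathbf{1}}=-1$: such a cube is disjoint from $\mathrm{supp}\,\tau$, so $\tau(\Delta)=0$ there and no contradiction arises; the independence argument is the one that works.
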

\begin{proof} 
    Suppose that there exists a non-identically zero series $(S)$ converging over cubes to zero outside the set $WD^{d}(\mathbf{N})$. Then the quasimeasure $\tau$ generated by the series $(S)$ is not identically zero. Let $\tau$ be concentrated on a dyadic cube $\Delta$ of rank $k_0$. Without loss of generality,
\[
\tau(\Delta) = C >0. 
\]
By Theorem \ref{th-2-2} 
\begin{equation} \label{eq-3-l}
    \lim_{i \rightarrow \infty}\int\limits_{\Delta} W_{N_i {\bf 1}} ({\bf g}) d\tau \rightarrow 0.
\end{equation}
For ${\bf g} \in WD^{d}(\mathbf{N})$ we have 
\begin{equation} \label{W_}
 W_{ {N_i {\bf 1}}} ( \mathbf{g}) = 1. 
\end{equation}
The series $(S)$ converges to zero over cubes on $\mathbb{G}^d\setminus WD^{d}(\mathbf{N})$, and therefore according to Sec. \ref{Subsub:F-Tau-05} $\mathrm{supp} \, \tau \subset WD^{d}(\mathbf{N})$. Thus, \eqref{W_} holds for $\mathbf{g} \in \mathrm{supp} \, \tau$, and consequently by Lemma \ref{lem-4}
\[ 
\left|\int\limits_{\Delta} 
W_{ N_i {\bf 1}} d \tau \right| = \tau(\Delta) = C>0.
\]
This contradicts formula \eqref{eq-3-l}. The theorem is proved.
\end{proof}

\begin{theorem} \label{th-3-2} The set $D_m$, defined by formula \eqref{D_m}, is a $U$-set for the $d$-dimensional Walsh system under convergence over cubes.
\end{theorem}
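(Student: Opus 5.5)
The plan is to argue by contradiction, exactly as in the proof of Theorem~\ref{th-3-1}. The key new ingredient is a choice of the partition $\{1,\ldots,d\}=\{i_1,\ldots,i_{m'}\}\sqcup\{j_1,\ldots,j_{d-m'}\}$ for which the character $R_{k\mathbf{1}}(\mathbf{g}^{*})$ from Theorem~\ref{th-2-1} is identically $1$ on $D_m$. On $D_m$ all diagonal coordinates $g^{j_1},\ldots,g^{j_{d-m}}$ coincide, so a product of Rademacher functions $R_k$ over an \emph{even} number of them equals $R_k(g^{j_1})^{2r}=1$.
\begin{itemize}
\item If $d-m$ is even, I take the partition of \eqref{D_m} itself ($m'=m$). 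Then $R_{k\mathbf{1}}(\mathbf{g}^*)=1$ on $D_m$.
\item If $d-m$ is odd (so $d-m\ge 3$), I move one diagonal coordinate, say $j_{d-m}$, into the ``lower'' group. This gives $m'=m+1\le d-1$, and the ``upper'' group $\{j_1,\ldots,j_{d-m-1}\}$ has even cardinality $\ge 2$. Again $R_{k\mathbf{1}}(\mathbf{g}^*)\equiv 1$ on $D_m$.
\end{itemize}
In both cases $m'\le d-1$, so Theorem~\ref{th-2-1} is applicable.

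Suppose a nonzero series $(S)$ converges over cubes to zero on $A:=\mathbb{G}^d\setminus D_m$. Since $D_m$ is a closed subgroup, Sec.~\ref{Subsub:F-Tau-05} gives $\mathrm{supp}\,\tau\subset D_m$ and $\tau\not\equiv 0$. Hence there is a cube $\Delta$ of rank $k_0$ with $\tau(\Delta)=C>0$ (replace $\tau$ by $\pm\tau$ or by its real/imaginary part as needed). Condition \eqref{Eq:1:1} holds on $A$ trivially, since both limits are $0$. By Lemma~\ref{lem-5}, applied with the partition chosen above, I get $\boldsymbol{\xi}\in\mathbb{G}^{m'}$ with
\[
\tau\bigl(\Delta^*\times\Delta^{(k)}(\boldsymbol{\xi})\bigr)>2^{-m'(k-k_0)}C .
\]
Theorem~\ref{th-2-1} then gives
\[
2^{m'k}\int_{\Delta^*\times\Delta^{(k)}(\boldsymbol{\xi})}R_{k\mathbf{1}}(\mathbf{g}^*)\,d\tau\to 0 .
\]
On the other hand, Lemma~\ref{lem-4}, applied to the parallelepiped $P=\Delta^*\times\Delta^{(k)}(\boldsymbol{\xi})$ and using $R_{k\mathbf{1}}(\mathbf{g}^*)=1$ on $\mathrm{supp}\,\tau\cap P$, shows that this quantity equals $2^{m'k}\tau(P)>2^{m'k_0}C$. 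This is a contradiction.

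The step I expect to be the main obstacle is verifying the geometric hypothesis \eqref{Eq:1:11} for $A=\mathbb{G}^d\setminus D_m$. This is needed for every cube $\Delta^*$ (or at least for the specific $\Delta^*$ and $\boldsymbol{\xi}$ produced above) and every splitting $J\sqcup\overline{J}$ of the lower coordinates. I would argue as in Example~\ref{exmpl}. Fix the coordinates in $\overline{J}$ at $\boldsymbol{\xi}^{\overline{J}}$; the section of $D_m$ is then contained in the set where the upper coordinates $g^{j_1},\ldots$ all coincide, possibly also with a fixed value when the moved diagonal coordinate lies in $\overline{J}$. The upper group always has at least two coordinates, and these are free variables of the section plane. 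So the section is contained in a coset of a proper closed subgroup of $\mathbb{G}^{d+|J|-m'}$, namely one defined by the equality of two independent coordinates. Its Haar measure in that plane is therefore $0$. Consequently $\bigl[\Delta^*\times\Delta(\boldsymbol{\xi}^J)\times\boldsymbol{\xi}^{\overline{J}}\bigr]\cap A$ has full, hence positive, measure in the cube $\Delta^*\times\Delta(\boldsymbol{\xi}^J)$. This needs to be written carefully in the odd case, where the lower group contains a diagonal coordinate. It is exactly there that the count $d-m\ge 3$ is used to guarantee two free equal coordinates in the upper group.
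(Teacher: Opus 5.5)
Your argument is correct and is essentially the paper's. The paper observes $D_m\subset D_{d-2}\subset WD^{2}(2^{\mathbb{N}}\mathbf{1})\times\mathbb{G}^{d-2}$ and invokes Theorem~\ref{th-3-1}, whose proof is exactly your contradiction via Lemma~\ref{lem-5}, Theorem~\ref{th-2-1} and Lemma~\ref{lem-4}. The only difference is that the paper uses just two diagonal coordinates $j_1,j_2$ as the upper group, which is legitimate since $D_m\subset\{g^{j_1}=g^{j_2}\}$; this makes your parity case split unnecessary and makes \eqref{Eq:1:11} immediate.
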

\begin{proof} Since $D_{m} \subset D_{d-2}$ for all $0\le m\le d-2$,
it suffices to show that $D_{d-2}$ is a $U$-set; then the smaller set $D_{m}$ is also a $U$-set. On the other hand,
    \[
    D_{d-2} \subset WD^{2}(2^\mathbb{N}  \mathbf{1}) \times \mathbb{G}^{d-2},
    \]
    since for every $k \in \mathbb{N}$ the conditions $g^{j_1}_{k} =g^{j_{2}}_{k}$ and $R_k(g^{j_1}, g^{j_2}) = 1$ are equivalent. It remains to apply Theorem \ref{th-3-1}. The theorem is proved.
\end{proof}

\begin{theorem} \label{th-3-4}
The set $Q_{m,\mathbf{q}}$, defined by formula \eqref{R_m}, is a $U$-set for the $d$-dimensional Walsh system under convergence over cubes.
\end{theorem}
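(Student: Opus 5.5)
The plan is to reduce to a two-coordinate statement, as in the proof of Theorem \ref{th-3-2}, and then to run the scheme of Theorems \ref{th-3-1} and \ref{th-3-5}. The step that needs a genuinely new ingredient is identifying which Walsh functions are identically $1$ on $Q_{m,\mathbf{q}}$.

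\emph{Reduction.} Only the first two of the chain of equalities in \eqref{R_m} are kept. This gives
\[
Q_{m,\mathbf{q}}\subset Q':=\{\mathbf{g}\colon \mathrm{C}_{q_1}(g^{j_1})=\mathrm{C}_{q_2}(g^{j_2})\},
\]
with all other coordinates free, so $Q'$ is a set of the type $Q_{d-2,(q_1,q_2)}$. It therefore suffices to show that $Q'$ is a $U$-set. Assume $q_1\ge q_2$ and put $p=q_1-q_2$. By \eqref{r_2}, membership in $Q'$ means that for all large $u$ the dyadic digits satisfy
\[
g^{j_1}_{u}=g^{j_2}_{u+p}.
\]
For $p=0$ this is exactly Theorem \ref{th-3-2}, so the task is to handle a shift $p>0$.

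\emph{Main step: a continuity property adapted to the shift.} Suppose a nonzero series converges over cubes to $0$ off $Q'$. Then $\mathrm{supp}\,\tau\subset Q'$ by Sec.~\ref{Subsub:F-Tau-05}. As in Theorem \ref{th-3-1}, Lemma \ref{lem-5} (with the $m=d-2$ free coordinates as the ``lower'' block) gives a point $\boldsymbol{\xi}$ and cubes $\Delta^*\times\Delta^{(k)}(\boldsymbol{\xi})$ with $\tau$-mass bounded below by $2^{-(d-2)(k-k_0)}C$. Example \ref{exmpl} supplies condition \eqref{Eq:1:11}.

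The aim is an analogue of Theorem \ref{th-2-1} with $R_{k\mathbf{1}}(\mathbf{g}^*)$ replaced by a character $\chi_k(\mathbf{g}^*)$ that is identically $1$ on $Q'$. The natural choice is $\chi_k(g^{j_1},g^{j_2})=R_{k}(g^{j_1})R_{k+p}(g^{j_2})$. The difficulty is that cubic sums only produce kernels that are symmetric in the coordinates. Indeed, by \eqref{Eq:WFS-5}, the difference $S_{N\mathbf{1}+2^s\mathbf{1}}-S_{N\mathbf{1}}$ yields $\prod_j 2^sW_N(x^j\oplus g^j)$, which is the same $W_N$ in every coordinate.

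\emph{Handling the symmetric kernel.} I would first try the indices $N_k=2^{k}+2^{k+p}$, which have two nonzero digits, so \eqref{Eq:2-0} holds and Lemmas \ref{lem-1} and \ref{lem-2} apply with $l=2$. On $Q'$ the product $W_{N_k}(g^{j_1})W_{N_k}(g^{j_2})$ reduces to a product of $R_{k+2p}$ and $R_{k-p}$ factors of the two coordinates. I would then combine the relations for $N_k$, $N_{k+p}$, $N_{k+2p},\dots$ telescopically, and use the Theorem~\ref{th-2-1}-type induction over the free coordinates. The expected outcome is a relation of the form
\[
2^{(d-2)k}\int_{\Delta^*\times\Delta^{(k)}(\boldsymbol{\xi})}\chi_k\,d\tau\to 0,
\]
obtained as a linear combination with bounded coefficients, exactly as \eqref{T_k} isolates $Q_k^{\emptyset}$.

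Once such a relation is available, Lemma \ref{lem-4} gives $\int\chi_k\,d\tau=\tau(\Delta^*\times\Delta^{(k)}(\boldsymbol{\xi}))$. This contradicts the lower bound from Lemma \ref{lem-5}, exactly as at the end of the proof of Theorem \ref{th-3-1}.

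The main obstacle is this middle step. The telescoping must close after boundedly many terms, since \eqref{Eq:2-0} forces boundedly many digits, and a naive chain leaves an uncancelled factor $R_{k-Lp}(g^{j_1})R_{k+Lp}(g^{j_2})$ or similar. If it does not close, my fallback is to absorb the leftover low-index factor by localizing. On a sub-cube of $\Delta$ of rank larger than $k-Lp$, that digit is constant, which is harmless provided the lower-bound bookkeeping of Lemma \ref{lem-5} is done on cubes of comparable rank. I expect the delicate part to be precisely this rank bookkeeping, that is, matching the localization rank to the contraction shift $p$.
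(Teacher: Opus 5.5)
\textbf{Gap: the central step is never proved, and the telescoping you sketch cannot prove it.} Every relation you can extract from cubic sums (via Lemma~\ref{lem-1} and Theorems~\ref{th-2-1}, \ref{th-2-2}) says that one specific linear functional of $\tau$ tends to $0$: an integral $\int\psi_k\,d\tau$ of a symmetric kernel. The relations for $N_k$, $N_{k+p}$, $N_{k+2p},\dots$ are about integrals of different characters. From $\int\psi\,d\tau\to0$ and $\int\psi'\,d\tau\to0$ nothing follows about $\int\psi\psi'\,d\tau$. Your telescoping multiplies characters, while the information you have is only linear in $\tau$, so the leftover factors are not a bookkeeping nuisance but a structural obstruction.

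There is a second problem. No available result combines the kernels $W_{N_k}$ with localization in the free coordinates; Theorem~\ref{th-2-2} puts $W_{N_k}$ on \emph{all} coordinates and works over a fixed cube. So even the individual relations you want would require a new hybrid theorem.

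The fallback is too vague to count as an argument. With your bookkeeping, Lemma~\ref{lem-5} and Theorem~\ref{th-2-1} localize only the $d-2$ free coordinates, and both constrained coordinates stay in the fixed block $\Delta^*$ of rank $k_0$. Then no Rademacher factor of $g^{j_1}$ or $g^{j_2}$ is constant on the support, so localization has nothing to act on.

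(A minor point: membership in $Q'$ means $g^{j_1}_u=g^{j_2}_{u+p}$ for \emph{all} $u\ge0$. The ``all large $u$'' set is a dense countable union of cosets of $Q'$ and is not closed, whereas Sec.~\ref{Subsub:F-Tau-05} needs a closed set.)

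\textbf{Missing idea (this is how the paper argues).} Your instinct to localize is right, but it should decide the localized block, not absorb leftover digits. Once you do that, no asymmetric character is needed. In $Q'$ the coordinate with the larger shift, $g^{j_1}$ in your notation, determines $g^{j_2}$.
\begin{itemize}
\item Choose the cube $\Delta$ with $\tau(\Delta)=C>0$ of rank $k_0\ge p$. Then the first $p$ digits of $g^{j_2}$ are fixed on $\Delta$, and the contraction is single-valued there.
\item Apply Lemma~\ref{lem-5} and Theorem~\ref{th-2-1} with $j_1$ \emph{together with} the free coordinates as the localized block, and only $j_2$ unlocalized. That is $d-1$ localized coordinates, which Theorem~\ref{th-2-1} permits; condition \eqref{Eq:1:11} comes from Example~\ref{exmpl}.
\item Put $\Pi_k:=\Delta^{j_2}\times\Delta^{(k)}(\xi^{j_1},\boldsymbol{\xi})$. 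On $\mathrm{supp}\,\tau\cap\Pi_k$ the digits $g^{j_1}_0,\dots,g^{j_1}_{k-1}$ are fixed, hence so are $g^{j_2}_0,\dots,g^{j_2}_{k+p-1}$. So $g^{j_2}$ lies in a single interval of rank $k+p>k$, on which $R_k$ is constant.
\item Therefore $2^{(d-1)k}\int_{\Pi_k}R_k(g^{j_2})\,d\tau=\pm2^{(d-1)k}\tau(\Pi_k)$. By Lemma~\ref{lem-5} this exceeds $2^{(d-1)k_0}C$ in modulus.
\item Theorem~\ref{th-2-1}, applied with the ordinary symmetric kernel, says the same quantity tends to $0$. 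This is the contradiction.
\end{itemize}
This is exactly where $p>0$ is used: for $p=0$ the pinned rank is only $k$, which is why that case goes through Theorems~\ref{th-3-2} and \ref{th-3-1}. Your reduction to two constrained coordinates is legitimate and fits this argument.
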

\begin{proof}
    If there exist $j_1$ and $j_2$ such that $q_{j_1}=q_{j_2}$, then
\[
Q_{m,\mathbf{q}}  \subset D_{d-2},
\]
and consequently $Q_{m,\mathbf{q}}$ is a $U$-set, since by Theorem \ref{th-3-2} $D_{d-2}$ is such.
Therefore, in what follows,
\[
0 \le q_1<q_2<\dots<q_{d-m}.
\]
   Suppose that there exists a non-identically zero series $(S)$ converging over cubes to zero outside the set $Q_{m,\mathbf{q}}$. Then the quasimeasure $\tau$ generated by the series $(S)$ is not identically zero. Let $\tau$ be concentrated on a dyadic cube $\Delta$ of rank $k_0$. Without loss of generality, $\Delta$ is so small that it contains exactly one connected component of the set $Q_{m, \mathbf{q}}$ and
\[
\tau(\Delta) = C >0. 
\]
Let $\Delta = \Delta^{*}\times \Delta_{*}$ (see designation \eqref{Delta*}). Next, let $\Delta^{*} = \widetilde{\Delta}^{*}\times \Delta^{j_{d-m}}$, where $\widetilde{\Delta}^{*}$ --- $(d-m-1)$-dimensional cube corresponding to the coordinates $j_1, \ldots, j_{d-m-1}$, and $\Delta^{j_{d-m}}$ is the interval corresponding to the coordinate $j_{d-m}$. For points $\mathbf{g} \in \mathbb{G}^d$, we adopt a similar notation $\widetilde{\mathbf{g}}^{*} = (g^{j_1}, \ldots, g^{j_{d-m-1}})$. 

By Lemma \ref{lem-5} exists a point $ (\xi^{j_{d-m}},\boldsymbol{\xi}) \in \mathbb{G}^{m+1}$ such that,
\begin{equation} \label{Eq:6:-1}
     \tau(\widetilde{\Delta}^* \times \Delta^{(k)}) > \frac{1}{2^{(m+1)(k-k_0)}} C, \qquad\text{where $\Delta^{(k)}: = \Delta^{(k)}(\xi^{j_{d-m}}) \times \Delta^{(k)}(\boldsymbol{\xi})$}.
\end{equation}
The interval corresponding to the $l$-th coordinate in the cube $\widetilde{\Delta}^{*}$ is partitioned into $2^{k + q_{d-m} - q_l - k_0}$ intervals of rank $k+q_{d-m}-q_l$. Then the cube $\widetilde{\Delta}^{*}$ is partitioned into \[N = 2^{(d-m-1)(k - k_0 + q_{d-m}) - \sum_{l=1}^{d-m-1}q_l}\] dyadic parallelepipeds, which we denote by $P_i$, where $i \in 1:N$, i.e.
\begin{equation}  \label{Eq:6:0} \widetilde{\Delta}^* \times  \Delta^{(k)} = \bigsqcup_{i=1}^{N} P_i \times  \Delta^{(k)}.\end{equation} 
Consider the $i$-th parallelepiped. Two cases are possible.

1) $(P_i \times  \Delta^{(k)}) \bigcap \mathrm{supp} \, \tau = \emptyset$, then \[\tau(P_i \times \Delta^{(k)}) = 0.\]

2) There is $\mathbf{g} \in (P_i \times \Delta^{(k)}) \bigcap \mathrm{supp} \, \tau$. Then, taking into account \eqref{r_2}, for every $k< d-m$ the following holds
\begin{equation}  \label{Eq:6:10}
g^{j_k} \in  \mathrm{C}_{[\,q_{d-m}-q_k\,]}\bigl(g^{j_{d-m}}\bigr).
\end{equation}
And for $j_{d-m}$ holds 
\begin{equation}  \label{Eq:6:1}
g^{j_{d-m}} \in \Delta^{(k)}(\xi^{j_{d-m}}). 
\end{equation}
Expression \eqref{Eq:6:1} is equivalent to the fact that
\begin{equation} \label{Eq:6:2}
g \in \Delta^{(k+q_{d-m}-q_k)}(\eta) \quad \text{for} \; g \in \mathrm{C}_{[\,q_{d-m}-q_k\,]}(g^{j_{d-m}}),  \; \eta \in  \mathrm{C}_{[\,q_{d-m}-q_k\,]}\bigl(\xi^{j_{d-m}}\bigr)
\end{equation}
such that $g\oplus \eta \in \mathbb{G}_{q_{d-m}-q_k}$.

Therefore, according to \eqref{Eq:6:10} and \eqref{Eq:6:2}
\begin{equation} \label{Eq:6:20}
g^{j_k} \in \Delta^{(k+q_{d-m}-q_k)}(\eta^{j_k}) \quad \text{for} \; \eta^{j_k} \in \mathrm{C}_{[\,q_{d-m}-q_k\,]}\bigl(\xi^{j_{d-m}}\bigr) \; \text{and} \; g^{j_k}\oplus\eta^{j_k} \in \mathbb{G}_{q_{d-m}-q_k}. 
\end{equation}

Let the point $\boldsymbol{\eta}: = (\eta^{j_1}, \ldots, \eta^{j_{d-m-1}}) \in \mathbb{G}^{d-m-1}$.
Denote by
\begin{equation} \label{Eq:6:3}
P(\boldsymbol{\eta}) = \mathop{\times}_{l=1}^{d-m-1} \Delta^{k+q_{d-m} - q_l} (\eta^{j_l}) \text{--- a parallelepiped containing the point $\boldsymbol{\eta}$}.
\end{equation}
Since in the proof we are not considering the entire group $\mathbb{G}^d$, but a dyadic cube $\Delta$ chosen such that the contraction operators involved in the definition of $Q_{m, \mathbf{q}}$, when their values are restricted to $\Delta$, become single-valued, then $\boldsymbol{\eta} \in \mathbb{G}^{d-m-1}$, whose components are defined by formula \eqref{Eq:6:20}, is the same for all $\mathbf{g} \in \Delta \cap \mathrm{supp} \ \tau$. Then for every $\mathbf{g} \in (P_i \times \Delta^{(k)}) \bigcap \mathrm{supp} \, \tau$ we have
\[
\mathbf{g} \in P(\boldsymbol{\eta}) \times \Delta^{(k)} .
\]
Since $P(\boldsymbol{\eta})$ is one of the parallelepipeds in the representation \eqref{Eq:6:0}, then in case 2) we obtain that $P_i = P(\boldsymbol{\eta})$.
Thus, 
\begin{equation*}
(P_i \times \Delta^{(k)})\cap \mathrm{supp} \, \tau \neq \emptyset  \iff  P_i = P(\boldsymbol{\eta}).
\end{equation*}
Considering \eqref{Eq:6:0},
 \begin{equation} \label{Eq:6:41}
\tau(\widetilde{\Delta}^* \times \Delta^{(k)}) = \tau(P(\boldsymbol{\eta}) \times \Delta^{(k)}).
\end{equation}
Since in our case the conditions \eqref{Eq:1:1} and \eqref{Eq:1:11} (see Example \ref{exmpl}) of Theorem \ref{th-2-1} are satisfied, applying this theorem we obtain
\begin{equation} \label{Eq:6:5}
\lim_{k\to\infty} 2^{(m+1)k}\int\limits_{\widetilde{\Delta}^* \times  \Delta^{(k)}} R_{k {\bf 1}}(\widetilde{\mathbf{g}}^{*})\,d\tau = 0.
\end{equation}
On the other hand, the integral from formula \eqref{Eq:6:5} can be represented as the sum of two integrals:
\begin{equation} \label{Eq:6:43}
  \int\limits_{P(\boldsymbol{\eta}) \times  \Delta^{(k)}} R_{k {\bf 1}}(\widetilde{\mathbf{g}}^{*})\,d\tau +  \int\limits_{(\widetilde{\Delta}^* \setminus P(\boldsymbol{\eta})) \times  \Delta^{(k)}} R_{k {\bf 1}}(\widetilde{\mathbf{g}}^{*})\,d\tau,
\end{equation}
where the second integral is zero due to formulas \eqref{Eq:6:0} and \eqref{Eq:6:41}. Furthermore, since each side of the parallelepiped $P(\boldsymbol{\eta})$ is an interval of rank greater than $k$ (due to the assumption that all $q_i$ are distinct), then $R_{k \textbf{1}}(P(\boldsymbol{\eta}))$ is well-defined, and then
\begin{equation} \label{Eq:6:42} 
         \int\limits_{P(\boldsymbol{\eta}) \times \Delta^{(k)}} R_{k {\bf 1}}(\widetilde{\mathbf{g}}^{*})\,d\tau 
        = R_{k \textbf{1}}(P(\boldsymbol{\eta})) \tau(P(\boldsymbol{\eta}) \times \Delta^{(k)}).
\end{equation}
Then from \eqref{Eq:6:-1}, \eqref{Eq:6:41}, \eqref{Eq:6:43} and \eqref{Eq:6:42} we obtain 
\[
\left|2^{(m+1)k}\int\limits_{\widetilde{\Delta}^* \times \Delta^{(k)}} R_{k {\bf 1}}(\widetilde{\mathbf{g}}^{*})\,d\tau\right| = 2^{(m+1)k}   \tau(\widetilde{\Delta}^* \times \Delta^{(k)}) >  2^{(m+1)k_0}C.
\]
This contradicts formula \eqref{Eq:6:5}. The theorem is proved.
\end{proof}

A particular case of Theorem \ref{th-3-4} is Corollary \ref{th-3-6}, which provides sufficient conditions on the sequence $\mathbf{N}_i \in \mathbb{N}^2$ under which the Dirichlet sets $WD^2(\mathbf{N}) \subset \mathbb{G}^2$ and the sets $WD^2(\mathbf{N}) \times \mathbb{G}^{d-2} \subset \mathbb{G}^d$ are $U$-sets for convergence over cubes. Specifically, sequences of the form \eqref{spec-N} with $N_i = 2^{i-1+q}$, where $q\in \mathbb{N}$, are considered. In contrast to Theorem \ref{th-3-5}, in Corollary \ref{th-3-6} the components of the two-dimensional sequence $\mathbf{N}_i$ are not only unequal, but also lie in different dyadic blocks, although they are asymptotically close in order $(\lim\limits_{i \rightarrow \infty} N_i/2^{i-1} < \infty)$, which, for example, is not true for $\mathbf{N}_i$ from formula \eqref{luk}.

\begin{cor} \label{important-WD} \label{th-3-6} Let $N_i = 2^{i-1+q}$ ($q \in \mathbb{N}$ is fixed) and the Dirichlet set $WD^2 (\mathbf{N})$ be defined by formula \eqref{Eq:U-05} with the sequence $\mathbf{N}$ defined by formula \eqref{spec-N}. Then the set $WD^2 (\mathbf{N}) \times \mathbb{G}^{d-2}$ is a $U$-set for the $d$-dimensional Walsh system under convergence over cubes.
\end{cor}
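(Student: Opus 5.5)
The plan is to show that the set $WD^2(\mathbf{N})\times\mathbb{G}^{d-2}$ is exactly one of the dyadic planes $Q_{m,\mathbf{q}}$ from \eqref{R_m}, with $m=d-2$. Theorem \ref{th-3-4} then gives the claim directly; no new analytic argument should be needed.

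First I unwind the Dirichlet condition. By \eqref{spec-N}, $\mathbf{N}_i=(2^{i-1},2^{i-1+q})$, so by \eqref{Eq:VC-F}
\[
W_{\mathbf{N}_i}(g^1,g^2)=W_{2^{i-1}}(g^1)\,W_{2^{i-1+q}}(g^2)=(-1)^{g^1_{i-1}+g^2_{i-1+q}} .
\]
Hence $W_{\mathbf{N}_i}(\mathbf{g})=1$ for all $i\in\mathbb{N}$ if and only if $g^1_k=g^2_{k+q}$ for all $k\in\mathbb{N}_0$. The first $q$ digits $g^2_0,\dots,g^2_{q-1}$ remain free. By the definition of the contraction operator, this says precisely that $g^2\in \mathrm{C}_q(g^1)$.

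Next I rewrite this membership as an equality of contractions, so that it matches the form of \eqref{R_m}. I do not use $\mathrm{C}_0$ directly, since the paper requires the shifts $q_j$ to be natural numbers. Instead I apply \eqref{r_2} with the pair of indices $1\le q+1$: the equality $\mathrm{C}_1(g^2)=\mathrm{C}_{q+1}(g^1)$ means exactly $g^2\in\mathrm{C}_{q}(g^1)$. So I take $\{j_1,j_2\}=\{2,1\}$, $\mathbf{q}=(1,q+1)$, and $\{i_1,\dots,i_{d-2}\}=\{3,\dots,d\}$ with the coordinates $\mathbf{g}_*\in\mathbb{G}^{d-2}$ free. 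This gives
\[
WD^2(\mathbf{N})\times\mathbb{G}^{d-2}=Q_{d-2,(1,\,q+1)} .
\]
The shifts are distinct, $1<q+1$, so this is the genuinely "inclined" case treated in the main part of the proof of Theorem \ref{th-3-4}. The constraint $m=d-2\le d-2$ in \eqref{R_m} is satisfied, and Theorem \ref{th-3-4} applies to conclude.

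The only step where care is needed is the bookkeeping of the multivalued operator $\mathrm{C}$. I must make sure that the set-valued equality in \eqref{R_m}, read via \eqref{r_2}, produces exactly the digit relation $g^1_k=g^2_{k+q}$, and not a shifted or reversed version. The check is to verify digit by digit that both sides of $\mathrm{C}_1(g^2)=\mathrm{C}_{q+1}(g^1)$ agree from position $q+1$ on if and only if $g^2_{k+q}=g^1_k$. Should the paper's ordering convention demand $q_1<q_2$ attached to the other coordinate, I would simply relabel $j_1,j_2$. Even if inclusion rather than equality were all one could verify, it would suffice, since subsets of $U$-sets are $U$-sets.
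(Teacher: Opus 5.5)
Your proposal is correct and is essentially the paper's proof. Like the paper, you unwind the condition $W_{\mathbf{N}_i}(\mathbf{g})=R_{i-1}(g^1)R_{i-1+q}(g^2)=1$ for all $i$ into $g^2\in\mathrm{C}_q(g^1)$, recognize the set as the inclined dyadic plane $Q_{d-2,\mathbf{q}}$, and apply Theorem \ref{th-3-4}. Your choice $\mathbf{q}=(1,q+1)$ via \eqref{r_2} is a little tidier than the paper's write-up, which effectively uses shifts $(q,0)$ (so one shift is not in $\mathbb{N}$) and carries a stray translate $\mathbf{x}$.
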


\begin{proof}
    We have  
    \[
    (WD^2(\mathbf{N}) \times \mathbb{G}^{d-2}) = \{\mathbf{g}\in \mathbb{G}^d \colon R_{i-1}(g^1)R_{i-1+q}(g^2) = 1 \; \forall i \in \mathbb{N}\} \times \mathbb{G}^{d-2} 
    =R_{d-2},
    \]
since the condition $R_{i-1}(g^1)R_{i-1+q}(g^2) = 1 \; \forall i \in \mathbb{N}$ is equivalent to the condition $\mathrm{C}_{q}(g^1 \oplus x^1)=g^2 \oplus x^2$. It remains to apply the Theorem\ref{th-3-4}.
\end{proof}

\begin{theorem} \label{th-3-3}
The set $P_m$, defined by formula \eqref{P_m}, is a $U$-set for the $d$-dimensional Walsh system under convergence over cubes.
\end{theorem}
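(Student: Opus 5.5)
We follow the scheme of Theorems \ref{th-3-1} and \ref{th-3-4}: argue by contradiction, pick a thin slab along $P_m$ carrying a lot of $\tau$-mass, and then contradict the continuity property \eqref{Eq:1:2} of Theorem \ref{th-2-1}.

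\emph{Setup.} Suppose a non-identically zero series $(S)$ converges over cubes to zero on $\mathbb{G}^d\setminus P_m$. Since $P_m$ is closed, Sec.~\ref{Subsub:F-Tau-05} gives $\mathrm{supp}\,\tau\subset P_m=\{\mathbf{g}\colon \mathbf{g}^*=\mathbf{0}\}$. The quasimeasure $\tau$ is not identically zero, so there is a dyadic cube $\Delta=\Delta^*\times\Delta_*$ of rank $k_0$ with, without loss of generality, $\tau(\Delta)=C>0$; necessarily $\mathbf{0}\in\Delta^*$. By Lemma~\ref{lem-5} we choose $\boldsymbol{\xi}\in\mathbb{G}^m$ with
\[
\tau\bigl(\Delta^*\times\Delta^{(k)}(\boldsymbol{\xi})\bigr)>2^{-m(k-k_0)}C \quad \text{for all } k\ge k_0 .
\]

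\emph{Applying Theorem \ref{th-2-1}.} Take $A=\mathbb{G}^d\setminus P_m$. Then \eqref{Eq:1:1} holds on $A$, since the cubic partial sums converge to zero there. Condition \eqref{Eq:1:11} holds by Example~\ref{exmpl}, because $P_m$ is a dyadic plane. Concretely, for any split $J\sqcup\overline J$ the slice $\Delta^*\times\Delta(\boldsymbol{\xi}^J)\times\boldsymbol{\xi}^{\overline J}$ lies in a plane of dimension $d-m+|J|$ with $d-m\ge1$ free $*$-coordinates, and $P_m$ meets it in a set where $\mathbf{g}^*=\mathbf{0}$, which has measure zero. So Theorem~\ref{th-2-1} gives
\[
2^{mk}\int_{\Delta^*\times\Delta^{(k)}(\boldsymbol{\xi})}R_{k\mathbf{1}}(\mathbf{g}^*)\,d\tau\to0 .
\]

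\emph{Contradiction.} On $\mathrm{supp}\,\tau\subset P_m$ we have $\mathbf{g}^*=\mathbf{0}$, so $R_{k\mathbf{1}}(\mathbf{g}^*)=1$. Lemma~\ref{lem-4}, applied to the parallelepiped $\Delta^*\times\Delta^{(k)}(\boldsymbol{\xi})$ with $\mathbf{N}=(2^k\mathbf{1},\mathbf{0})$, shows that the integral equals $\tau(\Delta^*\times\Delta^{(k)}(\boldsymbol{\xi}))$. Hence
\[
\left|2^{mk}\int_{\Delta^*\times\Delta^{(k)}(\boldsymbol{\xi})}R_{k\mathbf{1}}(\mathbf{g}^*)\,d\tau\right|>2^{mk_0}C>0
\]
for every $k$, contradicting the previous step. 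This is the same pattern as Theorem~\ref{th-3-1}. Indeed, $P_m\subset WD^{d-m}(2^{\mathbb{N}}\mathbf{1})\times\mathbb{G}^m$, since $\mathbf{g}^*=\mathbf{0}$ forces $R_{k\mathbf{1}}(\mathbf{g}^*)=1$ for all $k$. So the cleanest route is simply to note this inclusion and invoke Theorem~\ref{th-3-1} with $K=\mathbb{N}$. One caveat: for $d-m=1$ that Dirichlet set is all of $\mathbb{G}^d$, so the inclusion argument is useless there. In that case we run the direct argument above, where \eqref{Eq:1:11} only needs the one free coordinate.

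\emph{Main obstacle.} The delicate point is verifying \eqref{Eq:1:11} for every partition $J\sqcup\overline J$, especially $J=\emptyset$. There the set $\Delta^*\times\{\boldsymbol{\xi}\}$ must meet $A$ in positive $(d-m)$-dimensional measure. This holds because $P_m\cap(\Delta^*\times\{\boldsymbol{\xi}\})$ is the single point $(\mathbf{0},\boldsymbol{\xi})$. We also need $\boldsymbol{\xi}$ from Lemma~\ref{lem-5} to be compatible with the cubes $\Delta(\boldsymbol{\xi}^J)$ of rank $k_0$; this is automatic because $\boldsymbol{\xi}\in\Delta_*$.

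Finally, translation invariance extends the result to the cosets $P_m^{\mathbf{x}}$. If $(S)$ converges to zero off $P_m^{\mathbf{x}}$, then the series with coefficients $c_{\mathbf{n}}W_{\mathbf{n}}(\mathbf{x})$, whose partial sums are $S_N(\mathbf{g}\oplus\mathbf{x})$, converges to zero off $P_m$.
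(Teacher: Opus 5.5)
Your main argument is correct and is essentially the paper's proof: Lemma~\ref{lem-5} to pick $\boldsymbol{\xi}$, then Theorem~\ref{th-2-1} with \eqref{Eq:1:11} checked via Example~\ref{exmpl}, then Lemma~\ref{lem-4} using $R_{k\mathbf{1}}(\mathbf{g}^*)=1$ on $\mathrm{supp}\,\tau\subset P_m$. One side remark is wrong but harmless: for $d-m=1$ the set $WD^{1}(2^{\mathbb{N}}\mathbf{1})=\{g\colon g_k=0 \ \forall k\ge 1\}=\{0,e_0\}$ is not all of $\mathbb{G}$, so the inclusion $P_m\subset WD^{d-m}(2^{\mathbb{N}}\mathbf{1})\times\mathbb{G}^m$ and the one-line reduction to Theorem~\ref{th-3-1} work for every $m\le d-1$.
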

\begin{proof}
Suppose that there exists a non-identically zero series $(S)$ converging over cubes to zero outside the set $P_m$. Then the quasimeasure $\tau$ generated by the series $(S)$ is not identically zero. Let $\tau$ be concentrated on a dyadic cube $\Delta$ of rank $k_0$. Without loss of generality,
\[
\tau(\Delta) = C >0. 
\]
By Lemma \ref{lem-5} exists the point $\boldsymbol{\xi} \in \mathbb{G}^{m}$ such that
\begin{equation} \label{eta-1}
    \tau(\Delta^* \times \Delta^{(k)}(\boldsymbol{\xi})) > \frac{1}{2^{m(k-k_0)}} C,
\end{equation} 
where $\Delta^*$ --- designation from \eqref{Delta*}.

Since in our case the conditions \eqref{Eq:1:1} and \eqref{Eq:1:11} (see Example \ref{exmpl}) of Theorem \ref{th-2-1} are satisfied, applying this theorem we obtain
\begin{equation} \label{eq-3-2}
    \lim_{k \rightarrow \infty}2^{mk} \int\limits_{\Delta^{*} \times \Delta^{(k)}(\boldsymbol{\xi})} R_{k {\bf 1}} ({\bf g}^{*}) d\tau = 0.
\end{equation}
For $\mathbf{g} \in P_m$ holds
\begin{equation} \label{eta}
    {\bf g}^{*}  = \boldsymbol{0}.
\end{equation}
The series $(S)$ converges to zero over cubes on $\mathbb{G}\setminus P_m$, and therefore according to Sec. \ref{Subsub:F-Tau-05} $\mathrm{supp} \, \tau \subset P_m$. Thus, \eqref{eta} holds for $\mathbf{g} \in \mathrm{supp} \, \tau$, and consequently, using Lemma \ref{lem-4} and \eqref{eta-1}, we obtain
\[
 \left|2^{mk} \int\limits_{\Delta^{*} \times \Delta^{(k)}(\boldsymbol{\xi})} R_{k{\bf 1}} ({\bf g}^{*}) d\tau \right| = 2^{mk} \tau(\Delta^{*} \times \Delta^{(k)}(\boldsymbol{\xi})) > 2^{mk_0} C.
\]
This contradicts formula \eqref{eq-3-2}. The theorem is proved.
\end{proof}

Note that all sets considered in Theorems \ref{th-3-1}, \ref{th-3-5}, \ref{th-3-2}, \ref{th-3-4}, \ref{th-3-3} are subgroups, and one can consider cosets with respect to these subgroups.
\begin{cor} \label{no-important} Let $WD^{d-m}(2^K  \mathbf{1}) \times \mathbb{G}^{m}$ be defined by formula \eqref{RD_m(K)}, let $WD^d(\mathbf{N})$ be defined by formula \eqref{Eq:U-05} with the conditions on $\mathbf{N}$ from Theorem \ref{th-3-5}, and let $D_m$, $Q_m$, $P_m$ be defined by formulas \eqref{D_m}, \eqref{R_m}, and \eqref{P_m}, respectively. Then for any element $\mathbf{x} \in \mathbb{G}^d$, the cosets
\begin{equation} \label{super-all}
    [WD^{d-m}(2^K  \mathbf{1}) \times \mathbb{G}^{m}] \oplus \mathbf{x}, \; \; WD^d(\mathbf{N}) \oplus \mathbf{x}, \; \; D_m \oplus  \mathbf{x},\; \; Q_{m,\mathbf{q}} \oplus  \mathbf{x},\; \; P_m \oplus  \mathbf{x}
\end{equation}
are $U$-sets for convergence over cubes.
\end{cor}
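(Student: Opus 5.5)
The proof reduces to translation invariance: a shift by $\mathbf{x}$ turns the given series into a new series whose exceptional set is the corresponding subgroup itself, and the subgroup cases are Theorems \ref{th-3-1}, \ref{th-3-5}, \ref{th-3-2}, \ref{th-3-4}, \ref{th-3-3}.

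Let $E$ be any of the subgroups $WD^{d-m}(2^K\mathbf{1})\times\mathbb{G}^m$, $WD^d(\mathbf{N})$, $D_m$, $Q_{m,\mathbf{q}}$, $P_m$. Fix $\mathbf{x}\in\mathbb{G}^d$. Suppose the series $(S)$ $\sum c_{\mathbf{n}}W_{\mathbf{n}}(\mathbf{g})$ converges over cubes to zero on $\mathbb{G}^d\setminus(E\oplus\mathbf{x})$.

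Define a new series $(\widetilde S)$ with coefficients $\widetilde c_{\mathbf{n}}:=c_{\mathbf{n}}W_{\mathbf{n}}(\mathbf{x})$. By the multiplicativity $W_{\mathbf{n}}(\mathbf{g}\oplus\mathbf{x})=W_{\mathbf{n}}(\mathbf{g})W_{\mathbf{n}}(\mathbf{x})$, we get $\widetilde c_{\mathbf{n}}W_{\mathbf{n}}(\mathbf{g})=c_{\mathbf{n}}W_{\mathbf{n}}(\mathbf{g}\oplus\mathbf{x})$ for every $\mathbf{n}$. Summing over $\mathbf{n}<N\mathbf{1}$ gives $\widetilde S_N(\mathbf{g})=S_N(\mathbf{g}\oplus\mathbf{x})$ for all $N$ and $\mathbf{g}$.

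Take any $\mathbf{g}\notin E$. Since $\oplus$ is its own inverse on $\mathbb{G}^d$, we have $\mathbf{g}\oplus\mathbf{x}\notin E\oplus\mathbf{x}$. Hence $\widetilde S_N(\mathbf{g})=S_N(\mathbf{g}\oplus\mathbf{x})\to0$ as $N\to\infty$. So $(\widetilde S)$ converges over cubes to zero on $\mathbb{G}^d\setminus E$.

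By the relevant theorem ($E$ is a $U$-set for convergence over cubes), all $\widetilde c_{\mathbf{n}}=0$. Since $|W_{\mathbf{n}}(\mathbf{x})|=1$, all $c_{\mathbf{n}}=0$. Therefore $E\oplus\mathbf{x}$ is a $U$-set.

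No step here is a real obstacle. The only point needing care is that the shift acts pointwise on cubic partial sums, so the cubic mode of convergence is preserved exactly. This is immediate from the character identity above; no support or quasimeasure argument is needed.
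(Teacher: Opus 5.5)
Your proof is correct and follows the same route as the paper: reduce to Theorems \ref{th-3-1}, \ref{th-3-5}, \ref{th-3-2}, \ref{th-3-4}, \ref{th-3-3} using the shift invariance of $U$-sets for character systems. The only difference is that the paper cites this invariance (proved in the one-dimensional case and claimed to generalize verbatim), whereas you prove it directly via $\widetilde c_{\mathbf{n}}=c_{\mathbf{n}}W_{\mathbf{n}}(\mathbf{x})$ and $\widetilde S_N(\mathbf{g})=S_N(\mathbf{g}\oplus\mathbf{x})$, which is exactly that verbatim multidimensional argument.
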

\begin{proof} The statement follows directly from Theorems \ref{th-3-1}, \ref{th-3-5}, \ref{th-3-2}, \ref{th-3-4}, \ref{th-3-3} and the fact that for systems of characters of abelian groups, the class of $U$-sets is invariant under shifts on the group (for a proof in the one-dimensional case, see \cite{plotnikov-2020}; it generalizes verbatim to the multidimensional case).

\end{proof}
\begin{cor} \label{important} The dyadic planes defined in \eqref{all} are $U$-sets for convergence over cubes. In particular, all planes ${\mathbf{x} } \times \mathbb{G}^m$, parallel to the coordinate planes, are $U$-sets for convergence over cubes for every $m \le d-1$ and every $\mathbf{x} \in \mathbb{G}^{d-m}$.
\end{cor}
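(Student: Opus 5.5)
The plan is to reduce the corollary to Corollary \ref{no-important}, since by definition \eqref{all} every dyadic plane is a coset $D_m\oplus\mathbf{x}$, $Q_{m,\mathbf{q}}\oplus\mathbf{x}$ or $P_m\oplus\mathbf{x}$ of one of the subgroups \eqref{D_m}, \eqref{R_m}, \eqref{P_m}. Theorems \ref{th-3-2}, \ref{th-3-4} and \ref{th-3-3} show that these subgroups are $U$-sets for convergence over cubes. Corollary \ref{no-important} then transfers the property to all cosets. That corollary rests on the shift invariance of $U$-sets for character systems: if $\sum c_{\mathbf n}W_{\mathbf n}$ converges over cubes to zero off $E\oplus\mathbf{x}$, then $\sum c_{\mathbf n}W_{\mathbf n}(\mathbf{x})W_{\mathbf n}(\mathbf g)$ converges over cubes to zero off $E$. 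This holds because $W_{\mathbf n}(\mathbf g\oplus\mathbf x)=W_{\mathbf n}(\mathbf g)W_{\mathbf n}(\mathbf x)$ and the cubic partial sums are carried over exactly. Hence all the new coefficients $c_{\mathbf n}W_{\mathbf n}(\mathbf x)$ vanish, so all $c_{\mathbf n}$ vanish.

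For the second claim, I would observe that ${\mathbf x}\times\mathbb{G}^m$ (with the fixed coordinates placed in positions $j_1,\dots,j_{d-m}$ and the free ones in $i_1,\dots,i_m$) is exactly $P_m\oplus\widetilde{\mathbf x}$. Here $\widetilde{\mathbf x}\in\mathbb{G}^d$ is the element with $\widetilde{\mathbf x}^{*}=\mathbf x$ and $\widetilde{\mathbf x}_{*}=\mathbf 0$. Indeed, $P_m$ consists of the points with $\mathbf g^*=\mathbf 0$ and arbitrary $\mathbf g_*$, so translating by $\widetilde{\mathbf x}$ gives $\mathbf g^*=\mathbf x$ with $\mathbf g_*$ still arbitrary. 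The range $m\le d-1$ is precisely the range allowed in \eqref{P_m} and in Theorem \ref{th-3-3}, so Corollary \ref{no-important} applies for every such $m$ and every $\mathbf x\in\mathbb{G}^{d-m}$. The partition of coordinates in \eqref{g*} is arbitrary, so planes parallel to any choice of coordinate planes are covered.

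There is essentially no hard step here. The only point needing care is bookkeeping: checking that the definitions in \eqref{all} match the cosets listed in \eqref{super-all}, including that $Q^{\mathbf x}_{m,\mathbf q}$ is treated with arbitrary $\mathbf q$. If the shift-invariance argument is to be made explicit rather than cited, the only thing to check is that the cubic partial sums of the twisted series at $\mathbf g$ equal those of the original series at $\mathbf g\oplus\mathbf x$. This is immediate from the character identity.
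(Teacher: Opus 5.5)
Your proposal is correct and follows the paper's route. The paper gives no separate proof: the corollary is read off from Corollary~\ref{no-important}, that is, Theorems~\ref{th-3-2}, \ref{th-3-4}, \ref{th-3-3} combined with shift invariance of $U$-sets. Your twisted-series argument $\sum c_{\mathbf n}W_{\mathbf n}(\mathbf x)W_{\mathbf n}(\mathbf g)$ and the identification $\{\mathbf x\}\times\mathbb{G}^m=P_m\oplus\widetilde{\mathbf x}$ simply make explicit what the paper cites (shift invariance) or leaves implicit (the second claim).
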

Corollary \ref{important} contains the answer to the question of S.F.~Lukomskii from \cite{lukomskii-2021}, formulated in the introduction. This result generalizes both Theorem 3 from \cite{plotnikov-2017} from the case of $\lambda$-convergence to the case of convergence over cubes, and Theorem 1 from \cite{lukomskii-2021} from the case $m=1$ to the case of an arbitrary $m \le d-1$.

\begin{theorem} \label{th-3-7}
    Let $A^1, \ldots, A^q$ --- a finite collection of subsets of the sets from \eqref{super-all} such that $A^i \cap A^j = \emptyset$ for all $i, j \in 1:q$. Then the set 
    \[
    A = A^1 \sqcup \ldots \sqcup A^q
    \]
is a $U$-set for the $d$-dimensional Walsh system under convergence over cubes.
\end{theorem}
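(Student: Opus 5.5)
We prove Theorem \ref{th-3-7} by a localization argument that reduces the finite disjoint union to the case of a single set, which is already covered by Corollary \ref{no-important}. Suppose, to the contrary, that $(S)$ is a series not identically zero that converges over cubes to zero outside $A$.

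First we reduce to closed pieces. Each $A^i$ lies in some set $B^i$ from \eqref{super-all}. Every such $B^i$ is a coset of a closed subgroup of $\mathbb{G}^d$, so it is closed. We may therefore replace $A$ by the larger set $\widetilde A := B^1 \cup \ldots \cup B^q$, provided we keep track of the disjointness. The genuinely usable hypothesis is a separation property. Our plan is to first show that it suffices to treat the case where the closed sets $\overline{A^i}$ are pairwise disjoint. Compact disjoint sets in $\mathbb{G}^d$ are then separated by finite unions of dyadic cubes. Hence there exist disjoint clopen sets $U^1,\ldots,U^q$, each a finite union of dyadic cubes, with $\overline{A^i}\subset U^i$.

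Next comes the support step. By Sec.~\ref{Subsub:F-Tau-05}, the quasimeasure $\tau$ generated by $(S)$ satisfies $\mathrm{supp}\,\tau \subset \overline{A}$, which is contained in $\bigcup_i (\overline{A^i})$. Since $\tau\neq 0$, there is a dyadic cube $\Delta$ with $\tau(\Delta)\ne 0$. Splitting $\Delta$ along the partition $\{U^i\}$, we find an index $i$ and a dyadic cube $\Delta_0\subset U^i$ such that $\tau$ restricted to $\Delta_0$ is nonzero. We have $\mathrm{supp}\,\tau\cap\Delta_0\subset B^i$.

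Then we repeat the argument of the appropriate earlier theorem inside $\Delta_0$. Those proofs only use three ingredients. The first is the convergence hypothesis \eqref{Eq:1:1}, respectively \eqref{Eq:2:1}, on a set of positive measure. Here it holds on $\Delta_0\setminus B^i$, which has full measure in $\Delta_0$, and condition \eqref{Eq:1:11} holds as in Example \ref{exmpl}. The second is Lemma \ref{lem-5}, applied to the cube $\Delta_0$ itself. The third is Lemma \ref{lem-4}, applied to parallelepipeds inside $\Delta_0$, where the support lies in $B^i$. Theorems \ref{th-2-1} and \ref{th-2-2} are local in exactly this sense: their conclusions \eqref{Eq:1:2} and \eqref{Eq:2:2} concern integrals over small cubes $\Delta$, and they require only positive measure of $A$ inside the relevant sections. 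The contradiction therefore arises exactly as in the proofs of Theorems \ref{th-3-1}, \ref{th-3-5}, \ref{th-3-4} and \ref{th-3-3}. The shift by $\mathbf{x}$ is handled as in Corollary \ref{no-important}, or directly, since the Walsh characters absorb the shift by a unimodular factor.

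The main obstacle is the separation step. Disjointness of the $A^i$ alone does not give disjoint closures. For example, two sets from \eqref{super-all} that intersect (say, two coordinate planes through a common point) can contain disjoint subsets whose closures meet. Near such an intersection point, the support of $\tau$ could a priori straddle both planes inside every small cube. Our first attempt is to interpret the hypothesis as disjointness of the ambient sets from \eqref{super-all} (equivalently, of the closures), which makes the clopen separation immediate by compactness. If this interpretation is not intended, we would handle the intersection points separately. The intersection of two dyadic planes is again a dyadic plane of lower dimension, by the reasoning of Example \ref{exmpl}, so the local argument near it could be run with Theorem \ref{th-2-1} for a larger $m$. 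Making that induction on dimension rigorous is where we expect the real difficulty to lie.
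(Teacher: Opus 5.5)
Your proposal is correct under the reading that the closures $\overline{A^i}$ are pairwise disjoint. That condition is implied by, though not equivalent to, disjointness of the ambient sets $B^i$. Under this reading it is exactly the paper's argument: localize to a dyadic cube with $\tau\neq0$ that meets only one piece, then rerun the proofs of Theorems~\ref{th-3-1}, \ref{th-3-5}, \ref{th-3-4}, \ref{th-3-3} inside it; your clopen separation just makes explicit the step the paper asserts in one line.

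That reading is forced. Mere disjointness of the $A^i$ is a vacuous hypothesis: any finite union of sets from \eqref{super-all} can be split as $A^j:=B^j\setminus\bigcup_{i<j}B^i$. Read literally, the theorem would therefore cover the cross \eqref{krest}, which the Remark after this theorem leaves open. So drop the induction-on-dimension fallback rather than trying to make it rigorous.
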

\begin{proof}
In Theorems \ref{th-3-1}, \ref{th-3-5}, \ref{th-3-2}, \ref{th-3-4}, and \ref{th-3-3}, we used "information" about the a.e. convergence of the series $(S)$ only on some dyadic cube of rank $k_0$. And since the sets $A^1, \ldots, A^q$ are disjoint, this cube can be made so small that it intersects only one of them. Thus, transitioning to a finite number of disjoint sets does not change the course of the reasoning. The theorem is proved.
\end{proof}
\begin{remark} An arbitrary finite collection of pairwise disjoint dyadic planes is a $U$-set for convergence over cubes. Note that pairwise disjointness of sets is understood in the sense of the group $\mathbb{G}$, which is a more stringent requirement than their pairwise disjointness on the interval. For example, the diagonal $\{\mathbf{g} \in \mathbb{G}^2: g^1 = g^2\}$ and the anti-diagonal $\{\mathbf{g} \in \mathbb{G}^2: g^1 = g^2 \oplus\mathbf{1}\}$ are disjoint in $\mathbb{G}^2$, while their analogs in $[0,1]^2$ intersect at the point $(\frac{1}{2}, \frac{1}{2})$.
\end{remark}
\begin{remark} In some cases, the condition that the dyadic planes are pairwise disjoint can be removed. Thus, for example, for $\mathbf{q}^i \in \mathbb{N}^{d-m}$, $i \in 1:N$ such that all components of the $(d-m)$-dimensional vector of natural numbers are distinct, a set of the form \begin{equation} \label{dly-dok}
    P_{m+1} \cup Q_{m, \mathbf{q}^1} \cup \ldots \cup Q_{m, \mathbf{q}^N}, \quad \text{where} \; P_{m+1} \cap Q_{m, \mathbf{q}^1} \cap \ldots \cap Q_{m, \mathbf{q}^N} = \mathbf{0},
\end{equation} is a $U$-set for convergence over cubes. For the proof, we consider two cases. In the first case, the point $(\xi^{j_{d-m}},\boldsymbol{\xi}) \neq \mathbf{0} \in \mathbb{G}^{m+1}$ (see \eqref{Eq:6:-1}). Then the cube $\Delta$ can be chosen such that inside $\Delta$ the dyadic planes do not intersect. In the second case, $(\xi^{j_{d-m}},\boldsymbol{\xi}) = \mathbf{0}$ and then for each plane $Q_{m, \mathbf{q}^i}$ in formula \eqref{Eq:6:3}, one can assume that the parallelepipeds $P(\boldsymbol{\eta})$ have the form \begin{equation*} 
P^i := \mathop{\times}_{l=1}^{d-m-1} \mathbb{G}_{k+q_{d-m}^i - q^i_l}.
\end{equation*} Note that the restriction of the plane $P_{m+1}$ to the integration domain of the form \eqref{eq-3-2} is entirely contained in $P^i \times \mathbb{G}_k^{m+1}$ for every $i$. Then, repeating the reasoning of Theorem \ref{th-3-4}, with $P(\boldsymbol{\eta})$ of the form
    \[     \mathop{\times}_{l=1}^{d-m-1} \mathbb{G}_{k+\min\limits_{i\in 1:N} [q_{d-m}^i - q^i_l]}\]
we obtain that \eqref{dly-dok} is a $U$-set for convergence over cubes.

In the general case, the question of the necessity of the pairwise disjointness condition remains open. For example, it is not clear whether a two-dimensional "cross"
\begin{equation} \label{krest}
    \{\mathbf{g} \in \mathbb{G}^2: g^1 = \boldsymbol{\eta}, g^2 \in \mathbb{G}\} \cup \{\mathbf{g} \in \mathbb{G}^2: g^1 \in \mathbb{G}, g^2  = \boldsymbol{\xi}\}
\end{equation}
is a $U$-set for convergence over cubes.

Recall that \eqref{krest} is a $U$-set for convergence over rectangles according to \cite{Scv-1975}.
\end{remark}

\subsection{A Cantor–Lebesgue type theorem and a counterexample}
\label{p-2-1}
The following theorem is a generalization of Theorem 1 from \cite{plotnikov-2010}.
\begin{theorem} \label{theorem-1-1}
If the series $(S)$ of the form \eqref{series} converges over cubes to a finite sum on a set $A \subset \mathbb{G}^d$ of positive measure, then for every sequence $(n_s)$ such that the number of nonzero coefficients in the expansions of $n_s$ is bounded
\begin{equation} \label{Eq:1:-1}
    \varlimsup_{s \rightarrow \infty} \# n_s < \infty,
\end{equation}
the following equality holds
\begin{equation} \label{Eq:1:0}
    \lim_{s\rightarrow \infty} c_{n_s{\bf 1}} = 0.
\end{equation}

\end{theorem}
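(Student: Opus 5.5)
Assume the contrary: there are $\varepsilon>0$ and a subsequence, still denoted $(n_s)$, with $|c_{n_s\mathbf{1}}|\ge\varepsilon$ for all $s$. By \eqref{Eq:1:-1}, passing to a further subsequence, we may fix $l$ with $\#n_s=l$ for all $s$. Write $n_s=2^{k^s_1}+\ldots+2^{k^s_l}$ with $k^s_1>\ldots>k^s_l$. The case in which $k^s_l$ stays bounded along a subsequence can be reduced to a finite shift: after a further subsequence the lower part of $n_s$ is a fixed number $r<2^{p}$ and the remaining exponents tend to infinity. In the cleanest case all $k^s_j\to\infty$ and $k^s_j-k^s_{j+1}\to\infty$. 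Since only finitely many patterns of bounded gaps exist, a further diagonal subsequence lets us group exponents with bounded mutual gaps into one block of fixed shape. I will first write the argument for the separated case and then adapt it.

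The main tool is Lemma~\ref{lem-1} with $M_1=n_s$ and $M_2=M_1$, i.e. $r=0$. Then
\[
\sum_{\boldsymbol{\sigma}^1,\ldots,\boldsymbol{\sigma}^l\in\Sigma^d_2}\Bigl[S_{n_s+1}(\mathbf{g}\oplus\mathbf{e}_s)-S_{n_s}(\mathbf{g}\oplus\mathbf{e}_s)\Bigr]=2^{l(d-1)}c_{n_s\mathbf{1}}W_{n_s\mathbf{1}}(\mathbf{g}),
\]
where $\mathbf{e}_s=\bigoplus_j\mathbf{e}^{\boldsymbol{\sigma}^j}_{k^s_j+1}$. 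Since $|W_{n_s\mathbf{1}}|=1$, it suffices to make the left side small. By Egorov's theorem there is a set $E\subset A$ of positive measure on which $S_N\to S(\mathbf{g})$ uniformly. Hence for large $N$ we have $|S_{N+1}(\mathbf{h})-S_N(\mathbf{h})|<\delta$ for all $\mathbf{h}\in E$.

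Lemma~\ref{lem-2}, applied with this fixed $l$ and with indices $k^s_1+1>\ldots>k^s_l+1\ge k_0$, which holds for large $s$ because $k^s_l\to\infty$, gives a point $\mathbf{g}_s$ with all shifts $\mathbf{g}_s\oplus\mathbf{e}_s\in E$. Each bracket is then smaller than $\delta$, so
\[
2^{l(d-1)}|c_{n_s\mathbf{1}}|\le 2^{l(d-1)}\delta .
\]
This gives $|c_{n_s\mathbf{1}}|\le\delta$, contradicting $|c_{n_s\mathbf{1}}|\ge\varepsilon$ once $\delta<\varepsilon$. Only strictly decreasing exponents are needed, so gaps of any size are allowed.

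The remaining difficulty is the case where the smallest exponent $k^s_l$ does not tend to infinity. Then Lemma~\ref{lem-2} cannot place the shifts at levels $\ge k_0$ for those bits. My plan is as follows.
\begin{itemize}
\item Pass to a subsequence on which the low part is a fixed $r$, so $n_s=M_1^s+r$, where $M_1^s$ has $l'$ bits all tending to infinity and $r<2^{k^s_{l'}}$ for large $s$.
\item Apply Lemma~\ref{lem-1} with $M_1=M_1^s$ and $M_2=M_1^s+r$. This expresses $\sum_{M_1\mathbf{1}\le\mathbf{n}\le M_2\mathbf{1}}c_{\mathbf{n}}W_{\mathbf{n}}(\mathbf{g})$ through cubic differences $S_{M_2+1}-S_{M_1}$ at shifted points, and these are uniformly small on $E$.
\item Isolate the single coefficient $c_{n_s\mathbf{1}}$ from this block sum by an induction on $r$ or on the finite block size. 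Differences of the block sums at consecutive $r$ give sums over the "shells" $\{\mathbf{n}:\max n^q=M_1+r\}$ intersected with the box, not the single diagonal term.
\end{itemize}

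This last step is the main obstacle. An alternative is to work with the quasimeasure, as in Theorem~\ref{th-2-2}. One integrates the block sum against Walsh functions of low order $<2^{k_0}$ over a small dyadic cube, using that the finitely many low-order characters are constant on cubes of rank $k_0$. Averaging $\mathbf{g}$ over such a cube inside a density region of $E$ filters out all $\mathbf{n}\ne n_s\mathbf{1}$ within the box. I expect this averaging, combined with Lemma~\ref{lem-2}, to finish the proof.
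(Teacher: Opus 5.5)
Your argument for the case where the lowest binary exponent $k^s_l$ of $n_s$ tends to infinity is correct and is exactly the paper's proof: Lemma~\ref{lem-1} with $M_1=M_2=n_s$, Egorov's theorem, and Lemma~\ref{lem-2} to put all shifted points into the Egorov set. The genuine gap is the remaining case, where $k^s_l$ stays bounded, and it cannot be closed.

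Your proposed averaging fails for a concrete reason. If $r<2^p$, every character $W_{\mathbf j}$ with $\mathbf j\le r\mathbf 1$ is constant on every dyadic cube of rank $\ge p$. So averaging $\mathbf g$ over a small cube $\Delta_0$ inside a density region of $E$ controls only the single number $\sum_{\mathbf j\le r\mathbf 1}c_{M_1\mathbf 1+\mathbf j}W_{\mathbf j}(\Delta_0)$. Separating the individual low-order characters would require averaging over shifts by $e_t$ with $t<p$, and those shifts take you out of $E$.

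No other device can work, because the statement is false in this case. For $s\ge1$ set $c_{(2^s+1)\mathbf 1}=1$ and $c_{2^s\mathbf 1+\mathbf u_q}=-1/d$, where $\mathbf u_q$ is the $q$-th unit vector, and put all other coefficients equal to $0$. The series is then
\[
\sum_{s\ge1}W_{2^s\mathbf 1}(\mathbf g)\Bigl[\prod_{q=1}^d W_1(g^q)-\frac1d\sum_{q=1}^d W_1(g^q)\Bigr].
\]
Since $c_{2^s\mathbf 1}=0$, every cubic partial sum $S_N$ is a finite sum of complete blocks. Each block vanishes on $A=\{\mathbf g\colon g^1_0=\ldots=g^d_0=0\}$, where all $W_1(g^q)=1$. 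Thus $S_N\equiv0$ on $A$ and $\mu(A)=2^{-d}>0$, yet $\#(2^s+1)=2$ and $c_{(2^s+1)\mathbf 1}=1$ for every $s$.

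The paper's proof has the same hidden gap. It invokes Lemma~\ref{lem-2}, which needs every shift level to be at least $k_0$, with no condition on the lowest digit of $n_s$.

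What your first part (and the paper's argument) actually proves is the theorem under an extra hypothesis: the smallest index of a nonzero dyadic digit of $n_s$ tends to infinity (property $P$ of Theorem~\ref{th-2-2}). Alternatively, if $A$ has full measure, you can take the Egorov set with $\mu(E)>1-2^{-ld}$. Then $\bigcap_{\mathbf h\in H}(E\oplus\mathbf h)$ has positive measure for any set $H$ of at most $2^{ld}$ shifts, low levels included, and the conclusion holds for every $(n_s)$ with bounded $\#n_s$. The right move is therefore to restrict the statement in one of these ways, not to try to finish the bounded case.
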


\begin{proof} Let $l_s := \# n_s$ и $n_s = 2^{k_1}+ \ldots 2^{k_{l_s}}$.
Take in the lemma \ref{lem-1} $M_1 =M_2 = n_s$, then we have
\begin{equation*} 
\sum_{\boldsymbol{\sigma}^1 \in \Sigma^d_2}  \ldots \sum_{\boldsymbol{\sigma}^{l_s} \in \Sigma^d_2} \left[ S_{n_s+1}\left( \mathbf{g} \oplus \left( 
\bigoplus_{j=1}^{l_s}
\mathbf{e}^{\boldsymbol{\sigma}^j}_{k_j+1} \right) \right) - S_{n_s}\left( \mathbf{g} \oplus \left( 
\bigoplus_{j=1}^{l_s}
\mathbf{e}^{\boldsymbol{\sigma}^j}_{k_j+1} \right) \right)  \right]
\end{equation*}
\begin{equation} \label{Eq:1:1-1}
    = 2^{l_s(d-1)} c_{n_s\mathbf{1}} W_{n_s\mathbf{1}}(\mathbf{g}).
\end{equation}
On the other hand, following the proof scheme of Theorem 1 from \cite{plotnikov-2010}, which relies on Egorov's theorem, and supplementing it with Lemma \ref{lem-2}, we obtain the existence of a point $\mathbf{t}_s \in \mathbb{G}^d$ possessing the property that for all sufficiently large $s$
\begin{equation} \label{Eq:1:2-2}
    S_{n_s+1}\left( \mathbf{t}_s \oplus \left( 
\bigoplus_{j=1}^{l_s}
\mathbf{e}^{\boldsymbol{\sigma}^j}_{k_j+1} \right) \right) - S_{n_s}\left( \mathbf{t}_s \oplus \left( 
\bigoplus_{j=1}^{l_s}
\mathbf{e}^{\boldsymbol{\sigma}^j}_{k_j+1} \right) \right) < \varepsilon \; 
\end{equation}
for all ${\boldsymbol{\sigma}}^i \in \Sigma^d$, where $i\in 1:l_s$. Due to the fact that the sequence $l_s$ is bounded above, the application of Lemma \ref{lem-2} is justified.

Then from \eqref{Eq:1:1-1} and \eqref{Eq:1:2-2}:
\begin{align*}
|c_{n_s\mathbf{1}}| = |c_{n_s\mathbf{1}} W_{n_s\mathbf{1}}(\mathbf{t}_s)| < \varepsilon.
\end{align*}
By the arbitrariness of $\varepsilon > 0$, \eqref{Eq:1:0} is valid. The Theorem is proved.
\end{proof}

Condition \eqref{Eq:1:-1} is essential here: in Theorem \ref{theorem-1-2} below, a series is constructed that converges everywhere to a finite sum, for which there exists a sequence of diagonal coefficients (with indices not satisfying \eqref{Eq:1:-1}) whose growth is not majorized by any predetermined sequence. This example is a modification of one of the constructions given in Theorem 1 of the paper \cite{plotnikov-2012}.

\begin{theorem} \label{theorem-1-2}
Let a sequence $(m_s)_{s\in \mathbb{N}} \rightarrow \infty$ and a sequence $(n_s)_{s\in \mathbb{N}}$ be given such that $(n_s)_{i} =1$ (dyadic coefficients of $(n_s)$) for $i=0, \ldots, m_s -1$.
    Consider a sequence of nonzero real numbers $B_s$. Then there exists a double Walsh series
    \[
    (S) = \sum_{\alpha =0}^{\infty}\sum_{\beta = 0}^{\infty} c_{\alpha, \beta} W_{\alpha, \beta} (\mathbf{g})
    \]
    converging over squares everywhere on $\mathbb{G}^2$ to a finite sum, for which
    \[
    \varlimsup_{s \rightarrow \infty} \frac{|c_{n_s \bf{1}}|}{|B_s|} = + \infty.
    \]
\end{theorem}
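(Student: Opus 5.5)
We will build the series as a lacunary block construction: a sum of pieces $\sum_s a_s P_s(\mathbf{g})$, each piece a Walsh polynomial with a very localized structure, in the spirit of the construction in Theorem~1 of \cite{plotnikov-2012}.

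\emph{Choice of indices and the basic piece.} By passing to a subsequence of $(n_s)$, we may assume $m_s$ increases rapidly and the blocks containing $n_s$ are spread out. The natural piece is a product of one-dimensional Walsh polynomials $W_{n_s}\cdot \Phi_s$. Here $\Phi_s$ is chosen so that three properties hold. First, the diagonal coefficient at $n_s\mathbf{1}$ equals a prescribed large number $a_s$, for instance $a_s=s|B_s|$. Second, the quasimeasure of the piece is supported on a tiny set. Third, every cubic partial sum of the piece is uniformly controlled.

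The key identity is \eqref{Eq:WFS-5} combined with \eqref{Eq:Pl12-lemm5}. Since the low $m_s$ dyadic digits of $n_s$ all equal $1$, the Dirichlet kernel $D_{n_s}$ is a sum of $m_s$ terms $R_{k_1}\cdots R_{k_{j-1}}D_{2^{k_j}}$. Its pieces near the bottom digits are supported on intervals of rank about $k_j\ge 0,\dots,m_s-1$. We would take the piece
\[
P_s(g^1,g^2)=W_{n_s}(g^1)W_{n_s}(g^2)\Bigl(\sum_{j} \text{(antisymmetric combination)}\Bigr),
\]
for instance $W_{n_s\mathbf{1}}(\mathbf{g})\bigl(r(g^1)-r(g^2)\bigr)$ for a suitable low-order Rademacher-type factor. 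The intent is that cubic partial sums $S_N$ of this piece vanish identically for $N\le n_s$ and for $N> n_s+$(small). Only in the window where $N$ passes through the block determined by the $m_s$ ones of $n_s$ does the square partial sum see a nonzero contribution. We will then check that on this window the square partial sums of the piece equal $a_s$ times a function supported on a set whose dyadic diameter shrinks like $2^{-m_s}$. This is modeled on the computation in \cite{plotnikov-2012}: the square sum over $N\mathbf{1}$ for $n_s<N\le n_s+2^{m_s}$ reduces via \eqref{Eq:WFS-2} to $D_{N}(g^1)D_N(g^2)$-type differences, which cancel except near $g^1\oplus g^2$ close to special points.

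\emph{Summing the pieces and proving convergence.} Set $(S)=\sum_s P_s$ with disjoint frequency supports. The coefficient $c_{n_s\mathbf{1}}$ then equals $a_s$, giving $\varlimsup |c_{n_s\mathbf{1}}|/|B_s|=+\infty$. For convergence, fix $\mathbf{g}$. We need that $\mathbf{g}$ lies in the exceptional support set of the window of only finitely many pieces $s$. We would arrange this, by choosing the support sets and the subsequence, so that the exceptional sets $E_s$ satisfy the following: each point belongs to at most finitely many $E_s$, e.g. the $E_s$ are pairwise disjoint or shrink to a single point where the partial sums of every piece telescope to constants. Outside the windows each piece contributes either $0$ or its full value, which is constant in $N$. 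Hence $S_N(\mathbf{g})$ is eventually constant, so the series converges everywhere to a finite sum.

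\emph{Main obstacle.} The hard step is designing $\Phi_s$ so that the square partial sums inside the window are genuinely localized, i.e. vanish off a small set, while the coefficient at $n_s\mathbf{1}$ stays nonzero. This must use that $n_s$ has many ones, since Theorem~\ref{theorem-1-1} forbids it otherwise. I would first try the two-dimensional antisymmetric kernel
\[
W_{n_s\mathbf{1}}\bigl(D_{2^{m_s}}(g^1)-D_{2^{m_s}}(g^2)\bigr)\cdot 2^{-m_s}.
\]
I would then verify via \eqref{Eq:WFS-5} that its square sums cancel except on the anti-diagonal-type set $\{g^1\oplus g^2\in \mathbb{G}_{m_s}\}$ intersected with a small cube. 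I would then handle points on the limiting diagonal separately, by direct evaluation using \eqref{Eq:WFS}.
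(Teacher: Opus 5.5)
There is a genuine gap, and it lies in the design of the pieces. Every candidate you propose has the form $W_{n_s\mathbf 1}(\mathbf g)\bigl(f(g^1)-f(g^2)\bigr)$, so it is antisymmetric under $g^1\leftrightarrow g^2$. Its Walsh coefficients therefore satisfy $c_{\alpha,\beta}=-c_{\beta,\alpha}$, and all diagonal coefficients vanish. Concretely, in $2^{-m_s}W_{n_s\mathbf 1}(\mathbf g)\bigl(D_{2^{m_s}}(g^1)-D_{2^{m_s}}(g^2)\bigr)$ the coefficient at $(n_s,n_s)$ comes from the $k=0$ term of each kernel and equals $2^{-m_s}(1-1)=0$. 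So your antisymmetric structure destroys the very property you need, $c_{n_s\mathbf 1}=a_s$.

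Your convergence mechanism is also inconsistent. A piece with a nonzero coefficient cannot have square partial sums vanishing identically for all large $N$, because its complete sum is a nonzero polynomial. What you actually need is that the \emph{complete} piece vanish at each fixed point for all but finitely many $s$. Your kernel fails this. At $\mathbf g=(0,g^2)$ with $g^2\neq0$, the complete piece equals $a_s2^{-m_s}W_{n_s}(g^2)\bigl(2^{m_s}-0\bigr)=\pm a_s$ for all large $s$. Hence the series diverges on the axis $g^1=0$ whenever $a_s\not\to0$ (e.g.\ $B_s\equiv1$). Finally, the localisation inside the ``window'', which you identify as the main obstacle, is only stated as an intent.

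The missing idea is that no window is needed. In the paper the $s$-th block sits in a single row $\beta=n_s$, with $p_s=\lfloor\log_2 n_s\rfloor$:
\begin{itemize}
\item $c_{\alpha,n_s}=d_s$ for $2^{p_s}\le\alpha\le\frac12(2^{p_s}+n_s-1)$;
\item $c_{\alpha,n_s}=-d_s$ for $\frac12(2^{p_s}+n_s+1)\le\alpha\le n_s$.
\end{itemize}
A square $[0,N)^2$ contains this row segment either entirely ($N>n_s$) or not at all. So for $n_s<N\le n_{s+1}$, $S_N$ is the sum of the complete blocks $d_jW_{n_j}(g^2)\bigl(2D_{h_j}-D_{2^{p_j}}-D_{n_j+1}\bigr)(g^1)$, where $h_j=\frac12(2^{p_j}+n_j+1)$.

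The bracket vanishes in two places:
\begin{itemize}
\item at $g^1=0$, by \eqref{Eq:WFS}, since there are equally many $+$ and $-$ terms;
\item off a dyadic neighbourhood of $0$ of rank about $m_j$, by \eqref{Eq:Pl12-lemm5}. The $m_j$ trailing ones of $n_j$ make $n_j+1$, $2^{p_j}$ and $h_j$ divisible by a high power of $2$; this is exactly where the hypothesis on $n_s$ enters.
\end{itemize}
Since $m_j\to\infty$, every point sees only finitely many nonzero blocks, and $c_{n_s\mathbf 1}=-d_s$ can be made as large as desired. The one-sided row support is what avoids the symmetric cancellation that kills the diagonal coefficient in your construction.
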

\begin{proof}
Let the sequence $\{d_s\}_{s=0}^{\infty}$ be such that
\[
\varlimsup_{s \rightarrow \infty} \frac{|d_s|}{|B_s|} = + \infty.
\]
Define the coefficients of the series
\[
c_{\alpha, \beta} = 
\begin{cases} 
d_s, & \text{if } \beta = n_s, \; 2^{ \lfloor \log_2 n_s \rfloor} \leq \alpha \leq \frac{1}{2}(2^{\lfloor \log_2 n_s \rfloor}+n_s-1), \\
-d_s, & \text{if } \beta = n_s, \; \frac{1}{2}(2^{\lfloor \log_2 n_s \rfloor}+n_s+1) \leq \alpha \leq n_s, \\ 
0,    & \text{in other cases.}
\end{cases}
\]
Then for $n_s+1 \leq N \leq n_{s+1}$
\begin{equation} \label{eq-2-1}
    S_N (\mathbf{g}) = \sum_{j=0}^{s} d_jW_{n_j}(g^2)(2D_{\frac{1}{2}(2^{\lfloor \log_2 n_j \rfloor}+n_j+1)}(g^1) - D_{2^{\lfloor \log_2 n_j \rfloor}}(g^1) - D_{n_j+1}(g^1)).
\end{equation}
Consider two cases: $g^1=0$ and $g^1 \neq 0$. When $g^1=0$, using \eqref{Eq:WFS}, we have
\[S_N (0, g^2) = \sum_{j=0}^{s} d_jW_{n_j}(g^2) \left(2\left({\frac{1}{2}(2^{\lfloor \log_2 n_j \rfloor}+n_j+1)}\right) - 2^{\lfloor \log_2 n_j \rfloor} - n_j - 1\right) = 0.\]
When $g^1 \neq 0$, for some $q$ we have $g^1 \notin \Delta^{(q)}(0)$. On the other hand, the smallest index of a nonzero dyadic coefficient of the number $n_j +1$ is $m_j$ and according to \eqref{Eq:Pl12-lemm5}, the expression in brackets in \eqref{eq-2-1} is zero outside $\mathbb{G}_{m_j}$. Since $m_j \rightarrow \infty$, we obtain that in \eqref{eq-2-1} there are exactly $J = \min\{j\colon m_j \ge q\}$ nonzero terms for any $s \ge J$. Thus, the constructed series converges to a finite sum at all points of $\mathbb{G}^d$.
\end{proof}

\section{List of illustrations} 
\label{sp-il}
\begin{figure}[H]
    \centering
    \begin{minipage}{0.3\textwidth}
        \centering
        \includegraphics[width=\textwidth]{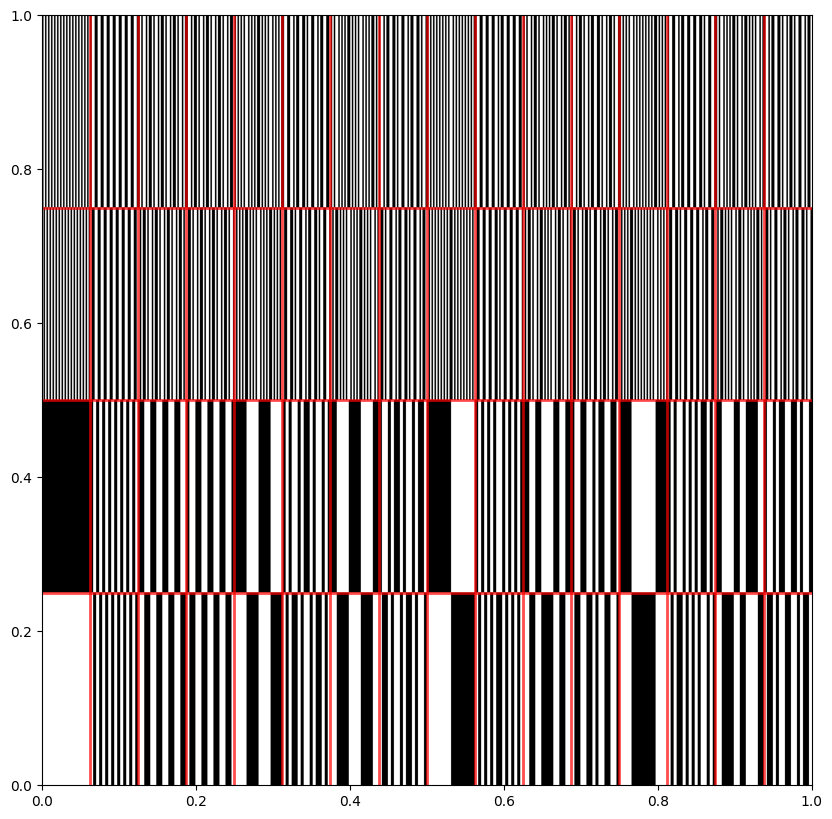}
        \\\small\textbf{(a)} Layer of the Lukomskii set \cite{Lukomskii-1992} $E^2$ --- $M$-set for convergence over cubes
    \end{minipage}
    \hfill
    \begin{minipage}{0.3\textwidth}
        \centering
        \includegraphics[width=\textwidth]{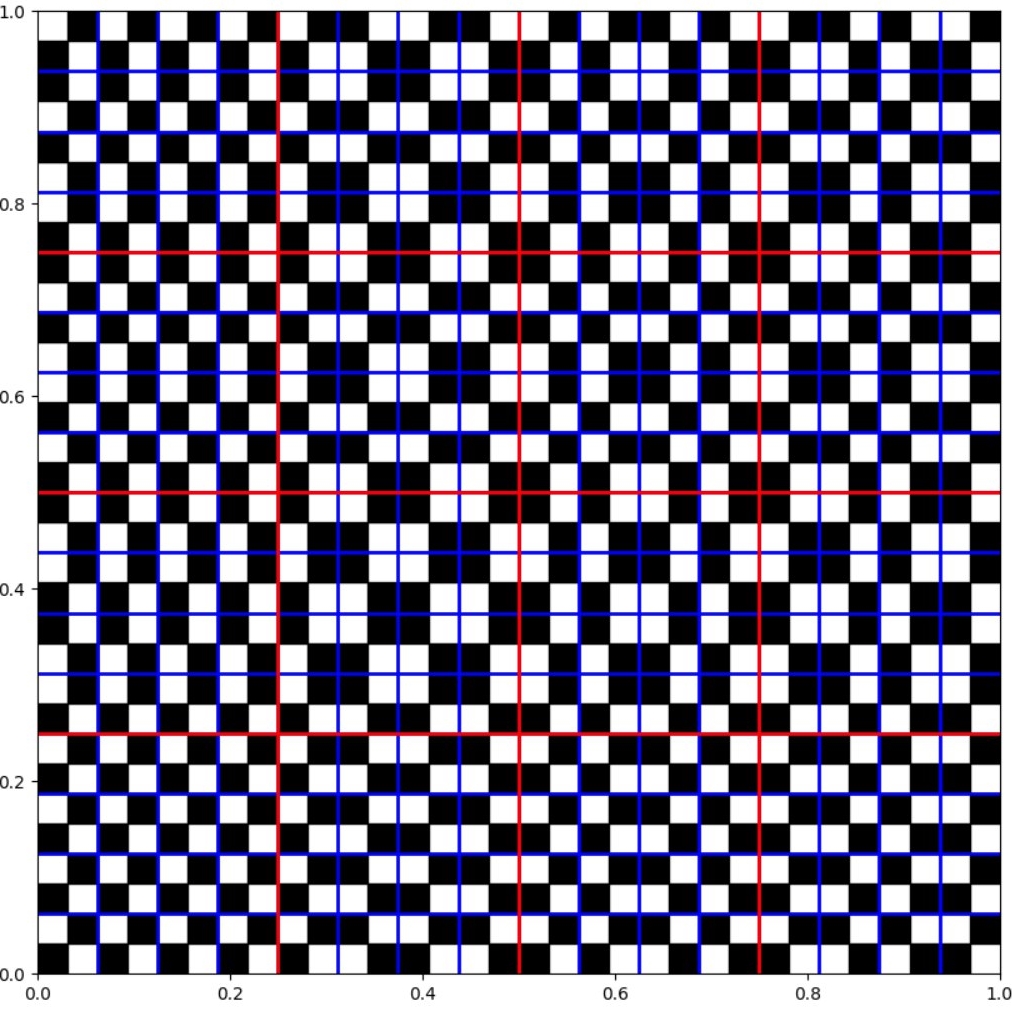}
        \\\small\textbf{(b)} Layer of the set from \cite{kazakova-plotnikov-2025-3} $F^2$ --- an $M$-set for convergence over rectangles
    \end{minipage}
    \caption{Layers of $M$-sets}
    \label{fig:1}
\end{figure}

\begin{figure}[H]
    \centering
    \begin{minipage}{0.3\textwidth}
        \centering
        \includegraphics[width=\textwidth]{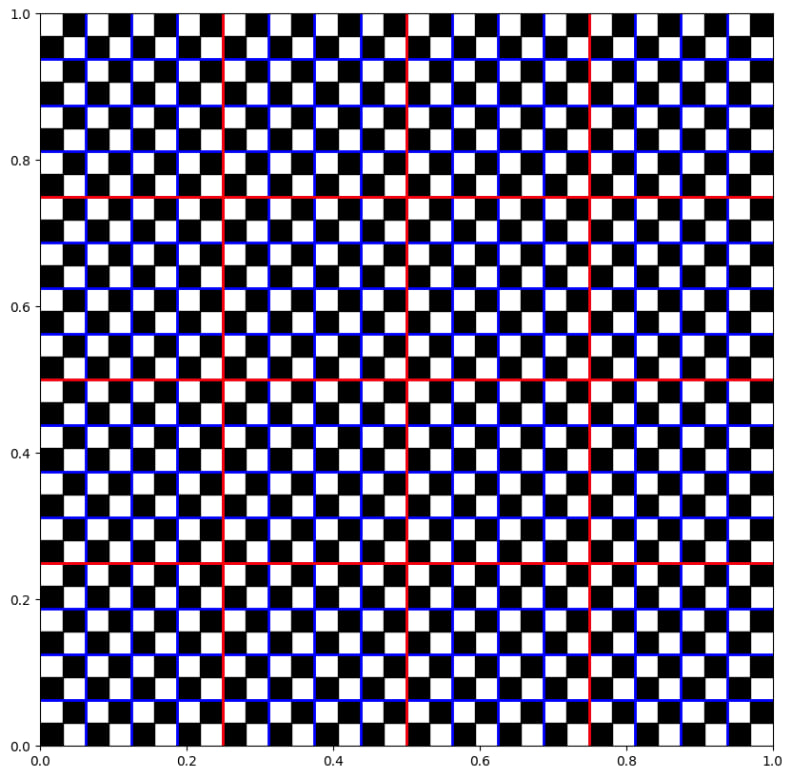}
        \\\small\textbf{(a)} 
    \end{minipage}
    \hfill
    \begin{minipage}{0.3\textwidth}
        \centering
        \includegraphics[width=\textwidth]{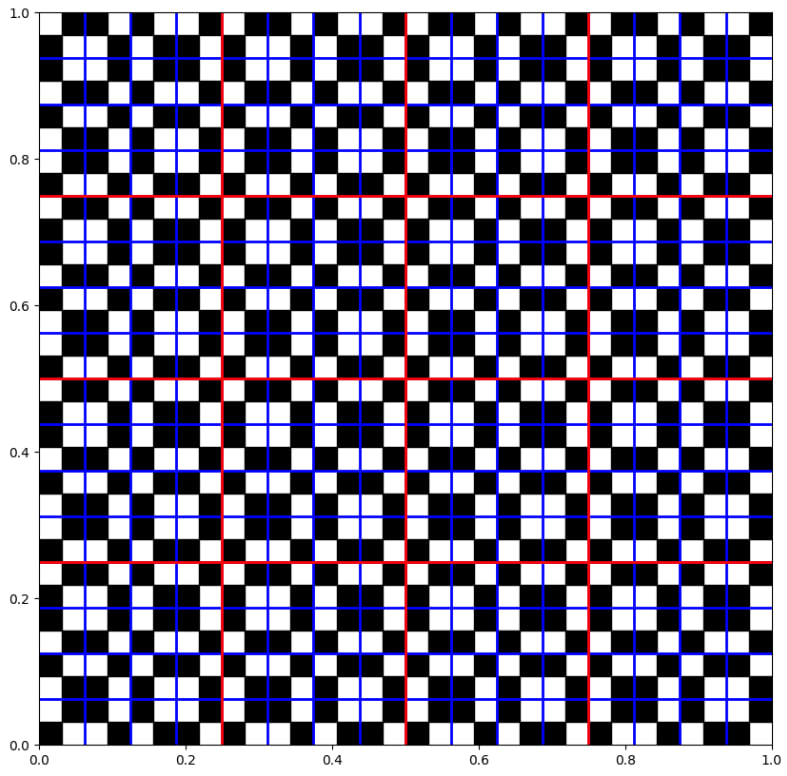}
        \\\small\textbf{(b)}  
    \end{minipage}
    \caption{Layers of Dirichlet-type sets}
    \label{fig:2}
\end{figure}

\begin{figure}[H]
    \centering
    \begin{minipage}{0.22\textwidth}
        \centering
        \includegraphics[width=\textwidth]{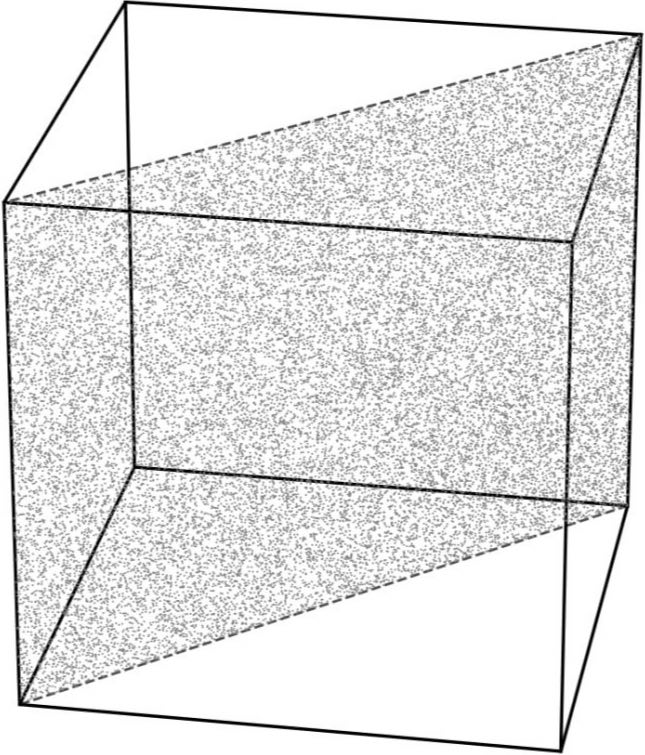}
            \\\small\textbf{(a)} $x=0$
    \end{minipage}
    \hfill
    \begin{minipage}{0.22\textwidth}
        \centering
        \includegraphics[width=\textwidth]{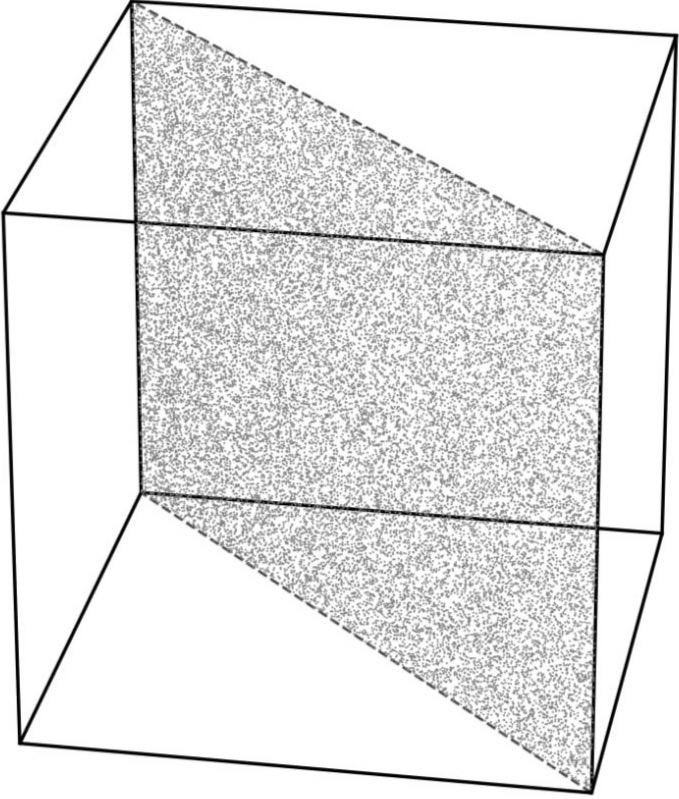}
        \\\small\textbf{(b)} $x = e_0$
    \end{minipage}
    \hfill
    \begin{minipage}{0.22\textwidth}
        \centering
        \includegraphics[width=\textwidth]{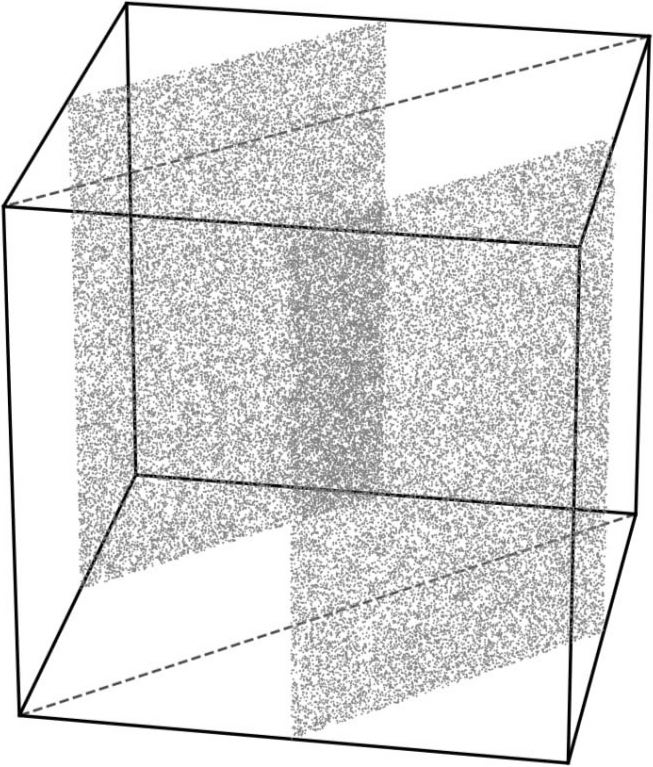}
        \\\small\textbf{(c)} $x = \oplus_{ k = 0 }^\infty e_k$ 
    \end{minipage}
    \hfill
    \begin{minipage}{0.22\textwidth}
        \centering
        \includegraphics[width=\textwidth]{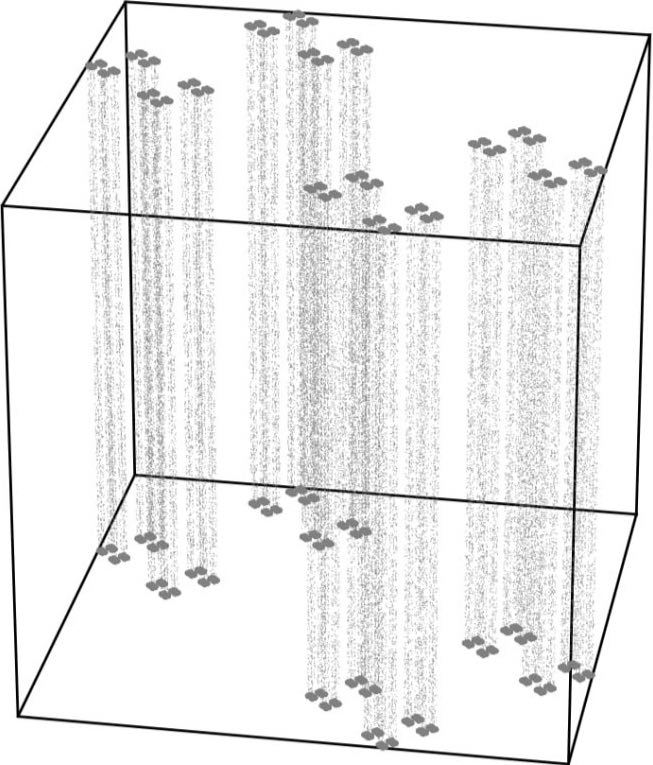}
        \\\small\textbf{(d)}  $x = \oplus_{ k = 0 }^\infty e_{2k+1}$ 
    \end{minipage}
    \caption{Dyadic planes of the form $D_1^{x}$}
    \label{fig:3}
\end{figure}

\begin{figure}[H]
    \centering
    \begin{minipage}{0.22\textwidth}
        \centering
        \includegraphics[width=\textwidth]{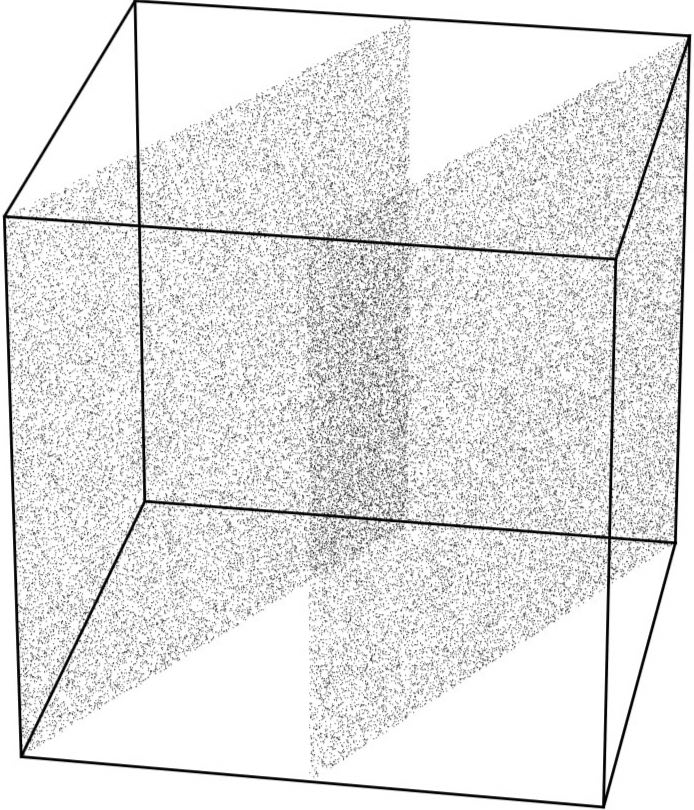}
            \\\small\textbf{(a)} $x=0$
    \end{minipage}
    \hfill
    \begin{minipage}{0.22\textwidth}
        \centering
        \includegraphics[width=\textwidth]{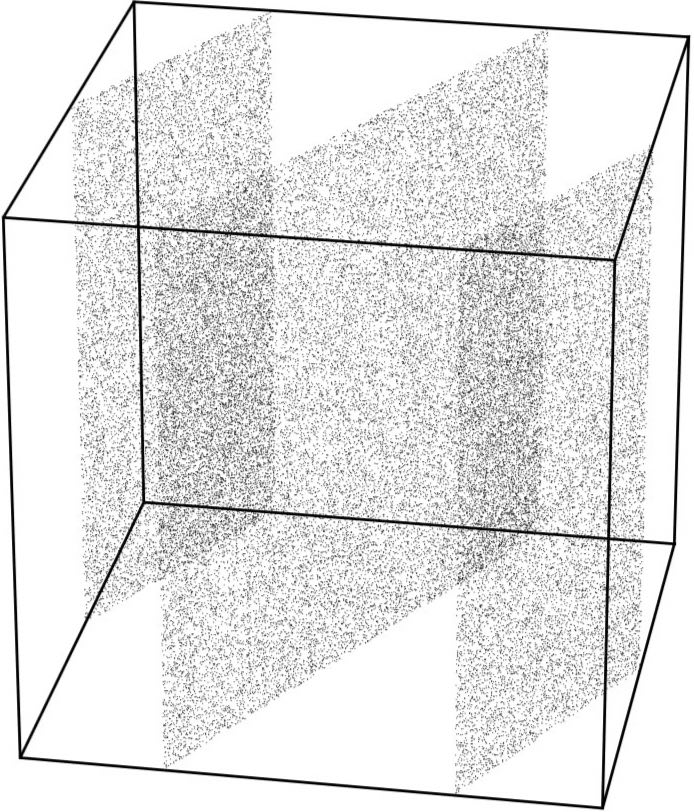}
        \\\small\textbf{(b)} $x = e_0$
    \end{minipage}
    \hfill
    \begin{minipage}{0.22\textwidth}
        \centering
        \includegraphics[width=\textwidth]{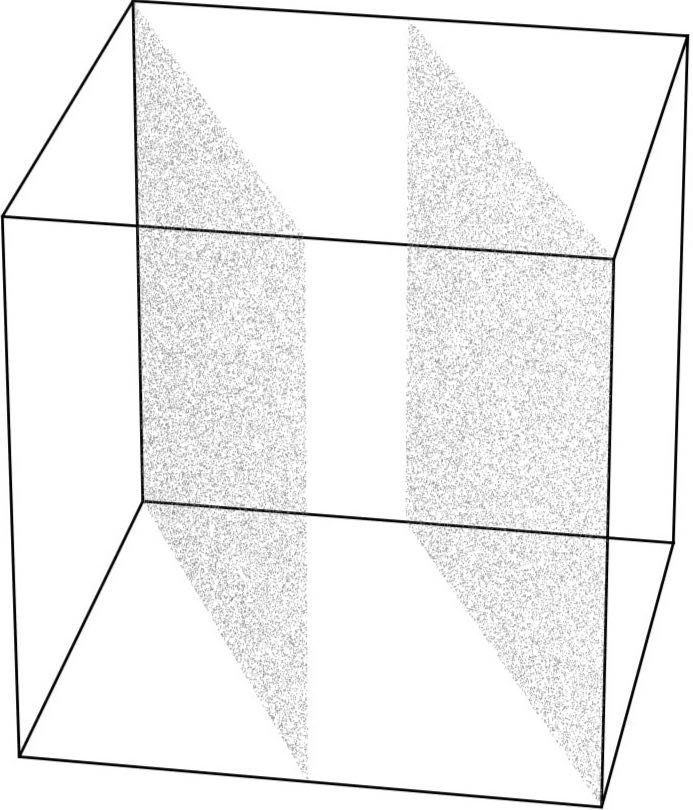}
        \\\small\textbf{(c)} $x = \oplus_{ k = 0 }^\infty e_k$ 
    \end{minipage}
    \hfill
    \begin{minipage}{0.22\textwidth}
        \centering
        \includegraphics[width=\textwidth]{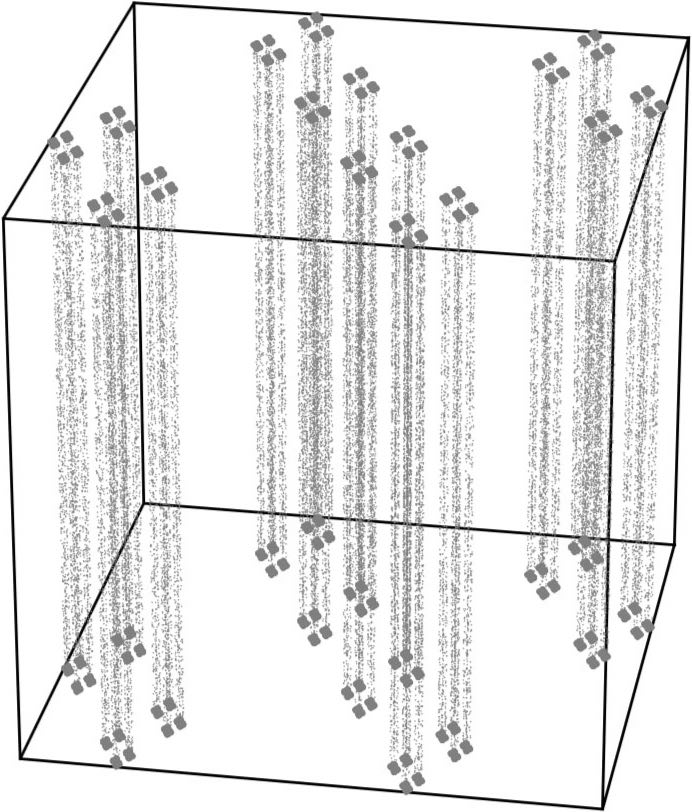}
        \\\small\textbf{(d)}  $x = \oplus_{ k = 0 }^\infty e_{2k+1}$ 
    \end{minipage}
    \caption{Dyadic planes of the form $Q_{1, \mathbf{q}}^{x}$ при $q_1 = 0$ и $q_2 =1$}
    \label{fig:4}
\end{figure}

\begin{figure}[H]
    \centering
    \includegraphics[width=0.22\textwidth]{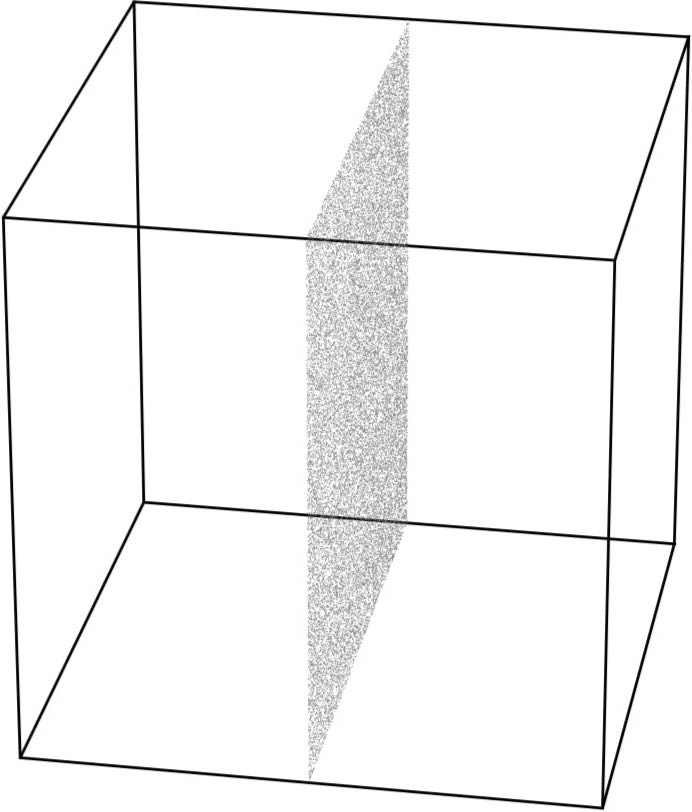}
    \caption{Dyadic plane of the form $P_2^{\mathbf{x}}, \quad \mathbf{x} \in \mathbb{G}^2$}
    \label{fig:5}
\end{figure}

\end{document}